\theoremstyle{plain}
\newcommand{\norm}[1]{\left\|#1\right\|}
\newtheorem{Theo}{Theorem}[section]
\newtheorem{lem}[Theo]{Lemma}
\newtheorem{prop}[Theo]{Proposition}
\theoremstyle{plain}
\theoremstyle{definition}
\newtheorem{defi}[Theo]{Definition}
\theoremstyle{remark}
\newtheorem{Rema}[Theo]{Remark}
\newtheorem*{rema*}{Remark}
\newcommand{\RR}{\mathbb{R}}
\begin{document}
\title{Global well-posedness for axisymmetric MHD system with only vertical  viscosity}

\author[Q. Jiu, H. Yu and X. Zheng]{Quansen Jiu$^{1}$, Huan Yu$^{2}$ and  Xiaoxin Zheng$^{3}$}
\address{$^1$ School of Mathematical Sciences, Capital Normal University, Beijing, 100048, P.R.
China}

\email{jiuqs@mail.cnu.edu.cn}

\address{$^2$  School of Mathematical Sciences, Capital Normal University, Beijing, 100048, P.R.
China}

\email{yuhuandreamer@163.com}

\address{$^3$ School of Mathematics and Systems Science, Beihang University, Beijing 100191, P.R. China}

\email{xiaoxinzheng@buaa.edu.cn}
\date{\today}
\subjclass[2000]{ 35B33, 35Q35 , 76D03, 76D05}
\keywords{ MHD system, vertical viscosity,  global
well-posedness, a losing estimate.}

\begin{abstract}
In this paper, we  are  concerned with  the global well-posedness of a tri-dimensional MHD system with only vertical viscosity in velocity equation for the large axisymmetric initial data. By  making good use of the axisymmetric structure of flow and the maximal smoothing effect of  vertical diffusion, we show that $\displaystyle\sup_{2\leq p<\infty}\int_0^t\frac{\|\partial_{z}u(\tau)\|_{L^p}^{2}}{p^{3/4}}\,\mathrm{d}\tau<\infty$.  With this regularity for the vertical first derivative  of velocity vector field, we further establish  losing estimates for the anisotropy tri-dimensional MHD system to get the high regularity of $(u,b)$,  which guarantees that $\int_0^t\|\nabla u(\tau)\|_{L^\infty}\,\mathrm{d}\tau<\infty$. This together with the classical commutator estimate entails the global regularity of a smooth solution.
\end{abstract}

\smallskip
\maketitle
\section{Introduction}
\setcounter{section}{1}\setcounter{equation}{0}
The magneto-hydrodynamics (MHD) equations govern the dynamics of the velocity and the magnetic field in electrically conducting fluids such as plasmas and reflect the basic physics conservation laws. It has been at the center of numerous analytical, experimental and numerical investigations. The Cauchy problem of the tri-dimensional incompressible MHD system has the following form
\begin{equation}\label{equ}
\left\{\begin{array}{ll}
(\partial_{t}+u\cdot\nabla)u-\nu_{x}\partial^{2}_{xx}u-\nu_{y}\partial^{2}_{yy}u-\nu_{z}\partial^{2}_{zz} u+\nabla p=(b\cdot\nabla) b,\quad(t,\mathrm{\mathbf{x}})\in\RR^+\times\RR^3,\\
(\partial_{t}+u\cdot\nabla)b-\eta_{x}\partial^{2}_{xx}b-\eta_{y}\partial^{2}_{yy}b-\eta_{z}\partial^{2}_{zz} b=(b\cdot\nabla) u,\\
\text{div}\,u=\text{div}\,b=0,\\
(u,b)|_{t=0}=(u_0,b_0),
\end{array}\right.
\end{equation}
where  $u=u(\mathrm{\mathbf{x}},t)$ denotes the velocity of the fluid, $b=b(\mathrm{\mathbf{x}},t)$ stands for the magnetic field and the scalar function $p=p(\mathrm{\mathbf{x}},t)$ is pressure.  The  parameters $\nu_{x}$, $\nu_{y}$, $\nu_{z}$, $\eta_{x}$, $\eta_{y}$, $\eta_{z}$ are nonnegative constants. In addition, the initial data $u_0$ and $b_0$ satisfy $\text{div}\,u_0=\text{div}\,b_0=0$ and $\mathrm{\mathbf{x}}=(x,y,z)$.

In the bi-dimensional case,  the constants $\nu_{z}$  and $\eta_{z}$ in problem \eqref{equ} become zero.  For this case, if all parameters $\nu_{x}$, $\nu_{y}$, $\eta_{x}$, $\eta_{y}$ are positive, some results concerning on the global well-posedness for  sufficiently smooth initial data (see for example \cite{DJ,ST}) were established in terms of the $L^2$-energy estimate. When the four parameters are zero, it reduces to an ideal MHD system. The global regularity of this system is still a challenging open problem. So, it has been a hot research topic to examine the intermediate cases where some of the four parameters are positive in the past few  years. Recently, Cao and Wu in \cite{CW} showed that smooth solutions are global for system  \eqref{equ} with $\nu_{x}>0$, $\nu_{y}=0$, $\eta_{x}=0$, $\eta_{y}>0$ or  $\nu_{x}=0$, $\nu_{y}>0$, $\eta_{x}>0$, $\eta_{y}=0$. More progress has also been made on several other partial dissipation cases of the bi-dimensional MHD equations. For system  \eqref{equ}  with $\nu_{x}>0$, $\eta_{x}>0$ and  $\nu_{y}=\eta_{y}=0$,  Cao, Dipendra and Wu \cite{CDW} derived  that  the horizontal component of any solution admits a global (in time) bound in any Lebesgue space $L^{2r}$ with $1 < r <\infty$ and the bound grows no faster than the order of
$r\log r$ as $r$ increases.
In \cite{CW} and \cite{LZ},  system  \eqref{equ}  with $\nu_{x}=\nu_{y}=0$, $\eta_{x} =\eta_{y} > 0$ are shown to posse global $H^{1}$ weak solutions. However, the uniqueness of such weak solutions and a global $H^{2}$-bound remain unknown.
Very recently, when $\eta_{x}=\eta_{y}=0$,  $\nu_{x} =\nu_{y}> 0$ in  system  \eqref{equ}, the global well-posedness by assuming that the initial data  is close to a non-trivial
steady state was investigated in \cite{LXZ}, \cite{Ren} and \cite{TZhang}.

Nevertheless, except that the initial data have some special structures, it is still not known whether or not the tri-dimensional Navier-Stokes system (when $b=0$ in \eqref{equ}) with large initial data has a unique global smooth solution. For instance, by assuming that  the initial data is axisymmetric without swirl,  Ladyzhenskaya \cite{LA} and Ukhovskii and Yudovich \cite{UY} independently  proved that weak solutions are regular for all time (see also \cite{PO}). Inspired by \cite{PO}, \cite{LA} and \cite{UY}, more recent works are devoted to considering the  axisymmetric Boussinesq or MHD system without swirl component of the velocity field. The global regularity results have been obtained for the axisymmetric Boussinesq system without swirl, when the dissipation only occurs in one equation or is  present only in one direction (anisotropic dissipation)(see, e.g., \cite{1,rd,HKR,HR,MZ,mz}). While for the axisymmetric MHD system, the first result that a specific geometrical assumption allows global well-posedness was established by Lei in \cite{L}.
More precisely,  under the assumption that $u_{\theta}$, $b_{r}$ and $b_{z}$ are trivial,  he showed that there exists a unique global solution if the initial data is smooth enough. Later on,  Jiu and Liu \cite{JL} further investigated the Cauchy problem for the tri-dimensional axisymmetric MHD equations with horizontal dissipation and vertical magnetic diffusion.

In the present paper, we aim at investigating  system \eqref{equ} with $\nu_{x}=\nu_{y}=0$, $\eta_{x}=\eta_{y}=\eta_{z}=0$.  Without loss of generality, we  set $\nu_{z}=1$.   The corresponding system thus reads
\begin{equation}\label{bs}
   \left\{\begin{array}{ll}
       (\partial_{t}+u\cdot\nabla)u-\partial^{2}_{zz} u+\nabla p=(b\cdot\nabla) b,\quad(t,\mathrm{\mathbf{x}})\in\mathbb{R}^{+}\times\mathbb{R}^{3},\\
       (\partial_{t}+u\cdot\nabla)b=(b\cdot\nabla)u,\\
       \text{div}\,u=\text{div}\,b=0,\\
       (u,b)|_{t=0}=(u_{0},b_{0}).
\end{array}\right.
\end{equation}
Our main concern here is to establish a family of unique solutions of system \eqref{bs} with the form
\begin{equation*}
u(\mathrm{\mathbf{x}},t)=u_{r}(r,z,t)e_{r}+u_{z}(r,z,t)e_{z},
\end{equation*}
\begin{equation*}
b(\mathrm{\mathbf{x}},t)=b_{\theta}(r,z,t)e_{\theta}
\end{equation*}
in the cylindrical coordinate system. Here
$$e_{r}=(\frac{x}{r}, \frac{y}{r}, 0),\quad e_{\theta}=(-\frac{y}{r}, \frac{x}{r}, 0) ,\quad e_{z}=(0,0,1),\quad \text{and}\quad  r=\sqrt{x^{2}+y^{2}}.$$
 Then, we  can equivalently reformulate \eqref{bs} as
\begin{equation}\label{MHDaxi}
\left\{\begin{array}{ll}
(\partial_{t}+u\cdot\nabla) u_{r}-\partial_{zz}^{2}u_{r}=-\partial_{r}p-\frac{b^{2}_{\theta}}{r},\quad(t,\mathrm{\mathbf{x}})\in\mathbb{R}^{+}\times\mathbb{R}^{3},\\

(\partial_{t}+u\cdot\nabla) u_{z}-\partial_{zz}^{2}u_{z}=-\partial_{z}p,\\

(\partial_{t}+u\cdot\nabla) b_{\theta}=\frac{b_{\theta}u_{r}}{r},\\

\partial_{r}u_{r}+\frac{u_{r}}{r}+\partial_{z}u_{z}=0,\\
       (u_{r},u_{z},b_{\theta})|_{t=0}=(u_{0}^{r},u_{0}^{z},b_{0}^{\theta}),
\end{array}\right.
\end{equation}
in the cylindrical coordinates.

By easy computations,  we find that  the vorticity $\omega:=\nabla\times u$ can be expressed as
\begin{equation*}
\omega(\mathrm{\mathbf{x}},t)=\omega_{\theta}(r,z,t)e_{\theta},
\end{equation*}
with
$$\omega_{\theta}=\partial_{z} u_{r}-\partial_{r} u_{z}.$$
It follows from \eqref{MHDaxi} that $\omega_{\theta}$ satisfies
\begin{equation}\label{w}
\partial_{t}\omega_{\theta}+(u\cdot\nabla)\omega_{\theta}-\partial_{zz}^{2}\omega_{\theta}=
\frac{u_{r}\omega_{\theta}}{r}-\frac{\partial_{z}b^{2}_{\theta}}{r}.
\end{equation}
Now let us present the main result.
\begin{Theo}\label{the}
Let $(u_0,b_0)\in H^{s}(\RR^3)\times H^{s}(\RR^3)$ with $s>\frac{5}{2}$ be axisymmetric divergence free vector fields such that $u_{0}=u_{0}^{r}e_{r}+u_{0}^{z}e_{z}$ and $b_{0}=b_{0}^{\theta}e_{\theta}$. Then system \eqref{bs} admits a unique global-in-time axisymmetric solution $(u,b)$ satisfying
  $$u\in C\big(\RR^+;H^{s}(\RR^3)\big),\quad \partial_{z}u\in L^{2}_{\rm loc}\big(\RR^+;H^{s}(\RR^3)\big),$$
$$b\in C\big(\RR^+;H^{s}(\RR^3)\big),\quad \frac{b_{\theta}}{r}\in C\big(\RR^+;H^{s-1}(\RR^3)\big).$$
\end{Theo}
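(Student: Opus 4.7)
The strategy is to carry out a priori estimates for smooth axisymmetric solutions that rule out finite-time blow-up via a Beale--Kato--Majda-type continuation criterion, namely by controlling $\int_0^T \|\nabla u(\tau)\|_{L^\infty}\,d\tau$. Local existence of a solution $(u,b)\in C([0,T^*);H^s)$ with $s>5/2$ is standard; since the magnetic equation is a pure transport equation, $H^s$-propagation of $b$ follows from Kato--Ponce commutator estimates with Gronwall multiplier $\|\nabla u\|_{L^\infty}$, and propagation of $H^s(u)$ requires the same quantity together with the vertical diffusion term. Hence the entire problem reduces to an a priori bound on $\|\nabla u\|_{L^1_t L^\infty_x}$.

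Two structural reductions drive the analysis. First, a direct computation shows that $\Pi:=b_\theta/r$ satisfies the pure transport equation $(\partial_t+u\cdot\nabla)\Pi=0$, so that $\|\Pi(t)\|_{L^p}=\|\Pi_0\|_{L^p}$ is conserved for every $p\in[2,\infty]$ along smooth solutions. Second, with $\Omega:=\omega_\theta/r$ and using \eqref{w}, a short calculation absorbs the vortex-stretching term and yields
\begin{equation*}
(\partial_{t}+u\cdot\nabla)\Omega-\partial_{zz}^{2}\Omega=-\partial_{z}(\Pi^{2}),
\end{equation*}
in which the magnetic source is a vertical derivative of a quantity already controlled in every $L^p$. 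Combined with the standard $L^2$ energy identity, which gives $(u,b)\in L^\infty_t L^2_x$ and $\partial_z u\in L^2_t L^2_x$, this produces a self-contained equation for $\Omega$ that has exactly the same abstract form as an axisymmetric Boussinesq vorticity equation with vertical dissipation.

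The technical heart is then to test the $\Omega$-equation against $|\Omega|^{p-2}\Omega$, integrate the $\partial_z(\Pi^2)$ source by parts in the vertical variable, and track the $p$-dependence carefully to obtain $L^\infty_t L^p_x$ bounds on $\Omega$ together with vertical smoothing estimates on $|\Omega|^{p/2}$ that grow at most like a small power of $p$. Recovering $\partial_z u$ from $\omega_\theta=r\Omega$ via the axisymmetric Biot--Savart representation (whose singular-integral character imposes a further $p$-loss), combined with one-dimensional Sobolev inequalities in $z$, should then produce the key estimate
\begin{equation*}
\sup_{2\leq p<\infty}\int_{0}^{t}\frac{\|\partial_{z}u(\tau)\|_{L^p}^{2}}{p^{3/4}}\,d\tau<\infty,
\end{equation*}
in which the exponent $3/4$ is precisely calibrated to the interplay between the $p$-growth produced by the singular integral and the $p$-growth absorbed by the vertical smoothing.

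With this weighted control in hand, I would run a losing regularity estimate in the spirit of Bahouri--Chemin and Chemin--Lerner for the anisotropic tri-dimensional MHD system: at each dyadic frequency $2^q$ one performs an $H^\sigma$-type energy estimate for $(u,b)$ with a $q$-dependent regularity loss $\sigma=\sigma(q)$, choosing at each scale an optimal Lebesgue exponent $p=p(q)$ so that the transport commutator contributions are absorbed by the previous $p^{3/4}$-weighted bound. A logarithmic Sobolev embedding then converts the resulting moving-regularity estimate into $\int_{0}^{t}\|\nabla u(\tau)\|_{L^\infty}\,d\tau<\infty$. Once this is established, the classical commutator estimate $\|[\Lambda^{s},u\cdot\nabla]v\|_{L^2}\lesssim \|\nabla u\|_{L^\infty}\|v\|_{H^s}+\|\nabla v\|_{L^\infty}\|u\|_{H^s}$ closes a Gronwall inequality for $\|u(t)\|_{H^s}^{2}+\|b(t)\|_{H^s}^{2}$, and uniqueness is standard. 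The principal obstacle is the third step: producing the precise $p^{3/4}$ weight requires balancing anisotropic one-directional diffusion against the non-diffusive magnetic source uniformly in $p\to\infty$, and any coarser $p$-dependence would break the losing estimate downstream.
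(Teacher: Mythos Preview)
Your overall architecture---local existence, the conserved quantity $\Pi=b_\theta/r$, the $\Omega=\omega_\theta/r$ equation with source $-\partial_z(\Pi^2)$, a losing-regularity scheme fed by a $p^{3/4}$-weighted bound on $\partial_z u$, and closure of $H^s$ via commutators and BKM---matches the paper. The gap is in the step you flag as ``the technical heart.''

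You propose to obtain the weighted estimate by testing the $\Omega$-equation in $L^p$, then ``recovering $\partial_z u$ from $\omega_\theta=r\Omega$ via the axisymmetric Biot--Savart representation.'' This path does not close. The relation $\omega_\theta=r\Omega$ is useless for passing from $\Omega\in L^p$ to $\omega_\theta\in L^p$ since $r$ is unbounded, and the axisymmetric Biot--Savart identity built on $\Omega$ controls $\partial_z(u_r/r)$, not $\partial_z u$. Moreover, the vertical smoothing you extract from the $\Omega$-equation lives on $|\Omega|^{p/2}$, which again does not convert into smoothing of $u$ at the required $p$-rate. The $p^{3/4}$ exponent cannot be produced along this route.

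What the paper actually does is a two-stage mechanism you omit. First, from $\|\Omega\|_{L^{3,1}}$ and the convolution inequality one gets $\|u_r/r\|_{L^\infty}<\infty$; this, together with an $L^p$ bound on $b_\theta$ itself (not just $b_\theta/r$), feeds into the \emph{full} vorticity equation \eqref{w} (not the $\Omega$-equation) to give $\|\omega_\theta(t)\|_{L^p}\le C\sqrt{p}$, i.e.\ $\omega\in L^\infty_t\sqrt{\mathbb{L}}$. Second---and this is the point you are missing---the $p^{3/4}$ estimate is obtained by applying vertical Littlewood--Paley blocks $\Delta_q^v$ to the \emph{velocity} equation, writing the Duhamel formula with the vertical heat semigroup, and using the identity $\mathcal{P}(u\cdot\nabla u)=\mathcal{P}(\omega\times u)$ so that the nonlinearity is bounded by $\|\omega\|_{L^p}\|u\|_{L^\infty}\lesssim \sqrt{p}$. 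An interpolation $\|\partial_z u\|_{L^p}\le C\|\Lambda_v^{3/4}u\|_{L^p}^{3/4}\|\Lambda_v^{7/4}u\|_{L^p}^{1/4}$ then converts the $p^{3/2}$-weighted $L^1_t$ smoothing bound on $u$ into the claimed $p^{3/4}$-weighted $L^2_t$ bound on $\partial_z u$. None of this comes from the $\Omega$-equation or from Biot--Savart applied to $r\Omega$; the maximal smoothing is extracted directly from the anisotropic momentum equation.
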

\begin{Rema}
Compared with the  model considered in \cite{L}, the diffusion only occurs in vertical direction of the velocity equation of system \eqref{bs}. This leads to the fact that  the method used in \cite{L} doesn't work for problem  \eqref{bs}. Thus we develop some new estimates and techniques to compensate for the loss of the horizontal diffusion. Also, the assumption of anisotropy is natural and  reasonable, because experiments show that in certain regimes and after
suitable rescaling, the horizontal dissipation is negligible
compared to  the vertical dissipation.
\end{Rema}
\begin{Rema}
Formally, system \eqref{bs} corresponds to the following bi-dimensional MHD equations
\begin{equation*}
\left\{\begin{array}{ll}
\partial_tu+(u\cdot\nabla)u-\partial_{yy}^2u+\nabla p=(b\cdot\nabla)b,\quad(t,\mathrm{\mathbf{x}})\in\RR^+\times\RR^2,\\
\partial_tb+(u\cdot\nabla)b=(b\cdot\nabla)u,\\
\mathrm{div}\,u=\mathrm{div}\,b=0,
\end{array}\right.
\end{equation*}
where $\mathrm{\mathbf{x}}=(x,y).$ But,  the global well-posedness of these  equations for the  large initial data is still an open problem. Cao and Wu \cite{CW} just showed the global regularity for the bi-dimensional MHD equations with mixed partial dissipation and magnetic diffusion.
\end{Rema}
Now, we briefly describe the difficulties and outline the main ingredients in our proof. Since the viscosity  occurs only in the vertical direction of the velocity equation,  behavior of the system is like that of the inviscid incompressible MHD equations.  Thus, how to establish $\int_0^t\|\nabla u(\tau)\|_{L^{\infty}}\,\mathrm{d}\tau<\infty$ for any $t>0$ is the key point to establish global solution for large initial data. To do this, we meet a big problem that it is impossible to use the $L^2$ energy estimate to control the strong coupling nonlinearities between the velocity and the magnetic fields.  This  induces us to consider the solution which enjoys the special structure. Inspired by the ideas in \cite{L,LA}, we consider the solution with the form
\begin{equation*}
u(\mathrm{\mathbf{x}},t)=u_{r}(r,z,t)e_{r}+u_{z}(r,z,t)e_{z}\quad\text{and}\quad
b(\mathrm{\mathbf{x}},t)=b_{\theta}(r,z,t)e_{\theta}.
\end{equation*}
Thanks to the special structure of this solution to system \eqref{bs},
we observe that the quantity  $\frac{b_\theta}{r}$ satisfies the homogeneous transport equation
\begin{equation}\label{b-equ}
\partial_{t}\Big(\frac{b_\theta}{r}\Big)+
(u\cdot\nabla)\Big(\frac{b_\theta}{r}\Big)=0.
\end{equation}
The incompressible condition allows us to get the maximum principle of $\frac{b_\theta}{r}$:
$$\Big\|\frac{b_{\theta}}{r}(t)\Big\|_{L^p}\leq \Big\|\frac{b_{0}}{r}\Big\|_{L^p}  \quad\text{for}\quad p\in [2,\infty].
$$
As a result, we are able to bound $\|\frac{\omega_{\theta}}{r}(t)\|_{L^{3,1}}$
and $\|b_{\theta}(t)\|_{L^{p}}$ with $p\in[2,\infty]$, where $L^{3,1}$ is a Lorentz space (see Section 2 for details).

Next, taking the standard $L^{p}$-estimate of the vorticity equation \eqref{w} yields
  \begin{equation*}
\frac1p\frac{\mathrm{d}}{\mathrm{d}t}\|\omega_\theta(t)\|_{L^p}^p+\frac{4(p-1)}{p^{2}}
\big\|\partial_{z}|\omega_\theta(t)|^{\frac{p}{2}}\big\|_{L^2}^2
=\int_{\mathbb{R}^3}  \frac{u_{r}}{r}|\omega_\theta|^{p} \,\mathrm{d}\mathrm{\mathbf{x}}+(p-1)\int_{\mathbb{R}^3}  \frac{b_{\theta}^{2}}{r}|\omega_\theta|^{p-2} \partial_{z}\omega_\theta\,\mathrm{d}\mathrm{\mathbf{x}}.
\end{equation*}
Taking advantage of the H\"{o}lder inequlity and the fact that $\big\|\frac{u_{r}}{r}\big\|_{L^{\infty}}\leq \big\|\frac{\omega_{\theta}}{r}\big\|_{L^{3,1}}$, we see that
  \begin{equation*}
\int_{\mathbb{R}^3}  \frac{u_{r}}{r}|\omega_\theta|^{p} \,\mathrm{d}\mathrm{\mathbf{x}}\leq  C\Big\|\frac{\omega_{\theta}}{r}\Big\|_{L^{3,1}}\|\omega_\theta\|_{L^p}^{p}.
  \end{equation*}
  On the other hand, by the Young inequality  and the H\"older inequality,  one has
\begin{align*}
(p-1)\int_{\mathbb{R}^3}  \frac{b_{\theta}^{2}}{r}|\omega_\theta|^{p-2} \partial_{z}\omega_\theta\,\mathrm{d}\mathrm{\mathbf{x}}\leq\frac{2(p-1)}{p^{2}}\big\|\partial_{z}|\omega_\theta|^{\frac {p}{2}}\big\|_{L^2}^{2}+Cp
\Big(\Big\|\frac{b_{\theta}}{r}\Big\|_{L^{2p}}^{4}+\|b_{\theta}\|_{L^{2p}}^{4}\Big)\|\omega_\theta\|_{L^p}^{p-2}.
\end{align*}
Collecting both estimates, it follows that
\begin{equation}\label{vorticity-estimate}
\|\omega(t)\|_{\sqrt{\mathbb{L}}}:=\sup_{2\leq p<\infty}\frac{\|\omega_{\theta}(t)\|_{L^{p}}}{{\sqrt{p}}}<
\infty.
\end{equation}
This together with the well-known fact $$\|\nabla u\|_{L^p}\leq C\frac{p^2}{p-1}\|\omega\|_{L^p}\quad\text{with}\quad p\in(1,\infty)$$ leads to $\|\nabla u\|_{L^p}\leq Cp^{\frac32}$.
Unfortunately, the function $p\sqrt{p}$ does not belong to the dual Osgood modulus of continuity (an dual Osgood modulus of continuity $\Omega(p)$ is the non-decreasing function satisfying $\int_a^\infty\frac{1}{\Omega(\tau)}\,\mathrm{d}\tau=\infty$ for some $a>0$). In other words, the growth rate of $\|\nabla u\|_{L^p}$  is  too fast to obtain higher-order estimates of $(u, b)$. To overcome this difficulty, we further exploit the space-time estimate about $\displaystyle\sup_{2\leq p<\infty}\int_0^t\frac{\norm{\partial_{z}u(\tau)}_{L^p}^{2}}{p^{3/4}}
\,\mathrm{d}\tau<\infty$ by making good use of the maximal vertical  smoothing effect and micro-local techniques.
This space-time estimate enables us to obtain the limited loss of the high regularity for $\omega_{\theta},\, b_\theta$ and $\frac{b_\theta}{r}$. Since this loss is arbitrary small, we can show that  $\int_0^t\|\nabla u(\tau)\|_{L^{\infty}}\,\mathrm{d}\tau<\infty$ for all $t\geq0$,  which we believe to be of independent interest (see Proposition~\ref{losingestimate--1} in Section \ref{Sec-3}). With this regularity for $u$,  the BKM's criterion ensures the global regularity of problem~\eqref{bs}.

The rest of the paper is organized as follows. In Section \ref{Sec-2}, we  review  Littlwood-Paley theory  and some useful lemmas and  then establish losing a priori estimates for transport equation,  which is an important ingredient in the proof of Theorem \ref{the}.  In Section \ref{Sec-3}, we  obtain a priori estimates for sufficiently smooth solutions of  system \eqref{bs} by using the procedure that we have just described in introduction. \mbox{Section \ref{Sec-4}} is devoted to the proof of Theorem \ref{the}. Finally, an appendix is devoted to several useful lemmas.

\noindent \emph{Notation}: Throughout  the paper,  the $L^{p}(\RR^{3})$-norm of a function $f$ is denoted by $\norm{f}_{L^p}$ and the $H^{s}(\RR^{3})$-norm by $\norm{f}_{H^s}$. Moreover, $L^{q}_{T}(X)=L^{q}(0,T;X)$, $1\leq q\leq\infty$   is the set of function $f(t)$ defined on
$(0, T)$ with values in a given Banach space $X(\RR^3)$ such that $\int_{0}^{T}\norm{f(t)}_{X(\RR^3)}\,\mathrm{d}t<\infty$ and we denote $X(\RR^3)$ by $X$ for simplicity. The spaces ${\mathbb{L}}^{a}$ with $a\in[0,1]$ consist of all functions $f\in L^{p}$, $2\leq p<\infty$ satisfying
\begin{equation*}
\norm{f}_{\mathbb{L}^{a}}:=\sup_{2\leq p<\infty}\frac{\norm{f}_{L^{p}}}{{p}^{a}}<\infty.
\end{equation*}
We denote ${\mathbb{L}^{\frac12}}$ by $\sqrt{{\mathbb{L}}}$ for the sake of simplicity.

\section{Preliminaries}\label{Sec-2}
\setcounter{section}{2}\setcounter{equation}{0}
This section consists of  three subsections. In the first subsection, we recall the Littlewood-Paley theory and introduce Besov spaces. In the second  subsection, we provide the Bernstein-type inequalities for fractional derivatives and some lemmas used for the proof of Theorem \ref{the}. The third  subsection is devoted to  the proof of  losing a prior estimates for the anisotropy transport-diffusion equations, which enables us to establish  the Lipschitz estimate for the velocity field.
\subsection{The Littlewood-Paley theory and Besov spaces}
Assume that $(\chi,\varphi)$ is a couple of smooth functions with  values in $[0,1]$
such that $\mathrm{supp}\,\chi\in\big\{\xi\in\mathbb{R}^{n}\big||\xi|\leq\frac{4}{3}\big\}$,
$\mathrm{supp}\,\varphi\subset\big\{\xi\in\mathbb{R}^{n}\big|\frac{3}{4}\leq|\xi|\leq\frac{8}{3}\big\}$ and
\begin{align*}
    \chi(\xi)+\sum_{j\in \mathbb{N}}\varphi(2^{-j}\xi)=1\quad {\rm for \ each\ }\xi\in \mathbb{R}^{n}.
\end{align*}
For every $u\in \mathcal{S}'(\mathbb{R}^{n})$,  the dyadic blocks can be defined by
\begin{equation*}
 \Delta_{-1}u=\chi(D)u\quad\text{and}\quad   {\Delta}_{j}u:=\varphi(2^{-j}D)u\quad {\rm for\ each\ }j\in\mathbb{N}.
\end{equation*}
We shall also use the  following low-frequency operator:
\begin{equation*}
    {S}_{j}u:=\chi(2^{-j}D)u.
\end{equation*}
From the definition of operators above, it is easy to check that
\begin{equation*}
    u=\sum_{j\geq-1}{\Delta}_{j}u\quad\text{in}\quad\mathcal{S}'(\mathbb{R}^{n}).
\end{equation*}
Moreover,  we introduce the Bony para-product decomposition to deal with the nonlinear term. For two tempered distributions $u$ and $v$, we define para-product term and remainder term as follows:
$$
T_{u}v=\displaystyle{\sum_{j}}S_{j-1}u\Delta_{j}v,  \qquad R(u,v)=\displaystyle{\sum_{|i-j|\leq 2}}\Delta_{i}u\Delta_{j}v
$$
and then we have the following Bony's decomposition
\begin{equation*}
uv=T_{u}v+T_{v}u+R(u,v).
\end{equation*}

\begin{defi}\label{def2.2}
For $s\in \mathbb{R}$, $(p,q)\in [1,+\infty]^{2}$ and $u\in \mathcal{S}'(\mathbb{R}^{n})$, we set
\begin{equation*}
    \norm{u}_{{B}^{s}_{p,q}(\mathbb{R}^{n})}:=
    \Big(\sum_{j\geq-1}2^{jsq}
    \norm{{\Delta}_{j}u}_{L^{p}(\mathbb{R}^{n})}^{q}\Big)^{\frac{1}{q}}
    \quad\text{if}\quad q<+\infty
\end{equation*}
and
\begin{equation*}
    \norm{u}_{{B}^{s}_{p,\infty}(\mathbb{R}^{n})}:=\sup_{j\geq-1}2^{js}\norm{{\Delta}_{j}u}_{L^{p}(\mathbb{R}^{n})}.
\end{equation*}
Then we define  \emph{inhomogeneous Besov spaces} as
\begin{equation*}
   {B}^{s}_{p,q}(\mathbb{R}^{n}):=\big\{u\in\mathcal{S}'(\mathbb{R}^{n})\big|\norm{u}_{{B}^{s}_{p,q}(\mathbb{R}^{n})}<+\infty\big\}.
\end{equation*}
\end{defi}

Next, we introduce the anisotropic spaces because the dissipation term only occurs in the vertical direction. To do this, we need to define the following anisotropic operator:
 $$\Delta_i^hf(\mathrm{\mathbf{x}}_h):=2^{2i}\int_{\RR^2}\varphi(\mathrm{\mathbf{x}}_h-2^i \mathrm{\mathbf{y}})f(\mathrm{\mathbf{y}})\,\mathrm{d}\mathrm{\mathbf{y}} \quad\text{and}\quad  \Delta_i^vf(z):=2^{2i}\int_{\RR}\varphi(z-2^i y)f(y)\,\mathrm{d}y$$
 for $i=0,1,2\cdots$ and $\mathrm{\mathbf{x}}_h:=(x,y)$.
 \begin{defi}\label{def-anisotropic}
For $\alpha,\beta \in \mathbb{R}$, $(p,q)\in [1,+\infty]^{2}$ and $u\in \mathcal{S}'(\mathbb{R}^{3})$, we set
\begin{equation*}
    \norm{u}_{{B}^{\alpha,\beta}_{p,q}(\mathbb{R}^{3})}:=
  \Big(\sum_{j,\,k\geq-1}2^{j\alpha q}2^{k\beta q}\big\|{\Delta}^{h}_{j}{\Delta}^{v}_{k}u\big\|_{L^{p}(\mathbb{R}^{3})}^{q}\Big)^{\frac{1}{q}}
    \quad\text{if}\quad q<+\infty
\end{equation*}
and
\begin{equation*}
    \norm{u}_{{B}^{\alpha,\beta}_{p,\infty}(\mathbb{R}^{3})}:=\sup_{j,\,k\geq-1}2^{j\alpha}2^{k\beta}\big\|{\Delta}^{h}_{j}{\Delta}^{v}_{k}u\big\|_{L^{p}(\mathbb{R}^{3})}.
\end{equation*}
Then \emph{anisotropic Besov spaces}  are  defined by
\begin{equation*}
   {B}^{\alpha,\beta}_{p,q}(\mathbb{R}^{3}):=\big\{u\in\mathcal{S}'(\mathbb{R}^{3})\big|\norm{u}_{{B}^{\alpha,\beta}_{p,q}(\mathbb{R}^{3})}<+\infty\big\}.
\end{equation*}
\end{defi}

Let us point out that the usual Sobolev spaces $H^\alpha$ and $H^{\alpha,\beta}$ coincide with  Besov spaces  $B_{2,2}^\alpha$ and $B_{2,2}^{\alpha,\beta}$, respectively.

Finally, we review Lorentz spaces and the generalized Young inequality for the convolution of two functions. Let $1<p<\infty$ and $1\leq q\leq\infty$. Then, by the classical real interpolation, one can define Lorentz spaces $L^{p,q}$ as follows:
\begin{equation*}
L^{p,q}(\mathbb{R}^n):=\big(L^{p_1}(\mathbb{R}^n),L^{p_2}(\mathbb{R}^n)\big)_{(\theta,q)},
\end{equation*}
where $1\leq p_1<p<p_2\leq\infty$ satisfy $\frac1p=\frac{\theta}{p_1}+\frac{1-\theta}{p_2}$ and $q\in[1,\infty].$

According to the definition above, we easily find that for $1<p<\infty$ and $1\leq q_1\leq q_2\leq\infty,$
\begin{equation*}
L^{p,q_2}(\mathbb{R}^n)\hookrightarrow L^{p,q_1}(\mathbb{R}^n)\quad\text{and}\quad L^{p}(\mathbb{R}^n)=L^{p,p}(\mathbb{R}^n).
\end{equation*}
\begin{lem}[\cite{ONeil}]\label{con}
Let $1<p,q,r<\infty$, $0< s_1,s_2\leq\infty,$ $\frac1p+\frac1q=\frac1r+1$, and $\frac{1}{s_1}+\frac{1}{s_2}=\frac1s.$ Then there holds
               \[\|f\ast g\|_{L^{r,s}(\mathbb{R}^n)}\leq C(p,q,s_1,s_2)\|f\|_{L^{p,s_1}(\mathbb{R}^n)}\|g\|_{L^{q,s_2}(\mathbb{R}^n)}.\]
Moreover, in the case that $r=\infty$, we have that for $\alpha\in(0,n),$
 \[\|f\ast g\|_{L^{\infty}(\mathbb{R}^n)}\leq C \|f\|_{L^{\frac{n}{\alpha},\infty}(\mathbb{R}^n)}\|g\|_{L^{\frac{n}{n-\alpha},1}(\mathbb{R}^n)}.\]
\end{lem}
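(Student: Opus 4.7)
The plan is to prove the O'Neil convolution inequality by combining the Hardy--Littlewood rearrangement inequality with a sharp pointwise bound on the maximal rearrangement of $f*g$, rather than going through bilinear real interpolation (which takes extra care to produce the exact relation $\frac1{s_1}+\frac1{s_2}=\frac1s$). The central tool is the characterization
\[
\|h\|_{L^{r,s}(\mathbb{R}^n)}\sim \Bigl(\int_0^\infty \bigl(t^{1/r}h^{**}(t)\bigr)^s\,\frac{\mathrm{d}t}{t}\Bigr)^{1/s},\qquad h^{**}(t):=\frac1t\int_0^t h^*(s)\,\mathrm{d}s,
\]
valid for $1<r<\infty$, $1\le s\le\infty$ (with the usual $\sup$-version when $s=\infty$), together with O'Neil's pointwise lemma
\[
(f*g)^{**}(t)\le t\,f^{**}(t)\,g^{**}(t)+\int_t^\infty f^*(s)\,g^*(s)\,\mathrm{d}s,
\]
which I would prove by splitting $f=f_1+f_2$ with $f_1=f\,\mathbf{1}_{\{|f|>f^*(t)\}}$, splitting $g$ analogously, and estimating the four resulting convolutions by Young's inequality on $L^1\times L^\infty$ together with Fubini.

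Substituting the pointwise bound into the $L^{r,s}$ expression yields two terms. For the first, I rewrite $t^{1+1/r}=t^{1/p}\cdot t^{1/q}$ using the scaling constraint $1+\frac1r=\frac1p+\frac1q$; then H\"older's inequality in $\mathrm{d}t/t$ with exponents $(s_1,s_2,s)$ satisfying $\frac1{s_1}+\frac1{s_2}=\frac1s$ reduces the estimate to $\|t^{1/p}f^{**}\|_{L^{s_1}(\mathrm{d}t/t)}\cdot\|t^{1/q}g^{**}\|_{L^{s_2}(\mathrm{d}t/t)}$, which is equivalent to $\|f\|_{L^{p,s_1}}\|g\|_{L^{q,s_2}}$ by Hardy's inequality applied to the averaging operator $f^*\mapsto f^{**}$ (available since $p,q>1$). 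For the second term I apply the weighted Hardy inequality
\[
\int_0^\infty t^{\alpha-1}\Bigl(\int_t^\infty \psi(\sigma)\,\mathrm{d}\sigma\Bigr)^s\mathrm{d}t\le C\int_0^\infty t^{\alpha+s-1}\psi(t)^s\,\mathrm{d}t\qquad (\alpha>0),
\]
with $\alpha=s/r$ and $\psi=f^*g^*$; this converts the second term into $\|t^{1+1/r}f^*(t)g^*(t)\|_{L^s(\mathrm{d}t/t)}$, which is controlled by the same H\"older argument as the first term.

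The endpoint $r=\infty$ does not need O'Neil's lemma: the Hardy--Littlewood inequality directly gives $\|f*g\|_{L^\infty}\le \int_0^\infty f^*(t)\,g^*(t)\,\mathrm{d}t$, and with $p=n/\alpha$, $p'=n/(n-\alpha)$ the assumption $f\in L^{p,\infty}$ provides $f^*(t)\le\|f\|_{L^{p,\infty}}\,t^{-\alpha/n}$, while the Lorentz identity $\|g\|_{L^{p',1}}\sim\int_0^\infty g^*(t)\,t^{-\alpha/n}\,\mathrm{d}t$ closes the estimate in one line. The step I expect to be the main obstacle is O'Neil's pointwise lemma itself: the bookkeeping of the four cross terms in the $f=f_1+f_2$, $g=g_1+g_2$ decomposition is the only place that requires genuine care, since everything downstream is standard Hardy/H\"older manipulation in the $\mathrm{d}t/t$ calculus of Lorentz norms and the scaling identities $1+\frac1r=\frac1p+\frac1q$, $\frac1s=\frac1{s_1}+\frac1{s_2}$ are exactly what makes the powers of $t$ cancel.
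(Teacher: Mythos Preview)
Your outline is correct and follows essentially O'Neil's original argument: the pointwise bound on $(f*g)^{**}$ combined with the $h^{**}$-characterization of the Lorentz quasi-norm, then Hardy and H\"older in the measure $\mathrm{d}t/t$. The endpoint treatment via $\|f*g\|_{L^\infty}\le\int_0^\infty f^*g^*$ and $f^*(t)\le\|f\|_{L^{p,\infty}}t^{-1/p}$ is also the standard one-line argument.

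There is nothing to compare against in the paper: Lemma~\ref{con} is quoted from \cite{ONeil} without proof and used as a black box (only the $r=\infty$ case is actually invoked, to bound $\|u_r/r\|_{L^\infty}$ by $\|\omega_\theta/r\|_{L^{3,1}}$). So you have supplied a complete proof where the paper simply cites the literature.
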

\subsection{Bernstein inequalities and some useful lemmas}
Bernstein inequalities are useful tools in dealing with Fourier localized functions
and  integrability for derivatives. The following lemma
provides Bernstein-type inequalities for fractional derivatives.
\begin{lem}\label{bern}
Let $\alpha\ge0$ and  $1\le p\le q\le \infty$.
\begin{enumerate}[\rm 1)]
\item  If $f$ satisfies
$$
\mathrm{supp}\, \widehat{f} \subset \{\xi\in \mathbb{R}^n: \,\, |\xi|
\le K 2^j \},
$$
for some integer $j$ and a constant $K>0$, then there exists a constant $C_1>0$ such that
$$
\|\Lambda^{\alpha} f\|_{L^q(\mathbb{R}^n)} \le C_1\, 2^{\alpha j +
j n(\frac{1}{p}-\frac{1}{q})} \|f\|_{L^p(\mathbb{R}^n)}.
$$
\item  If $f$ satisfies
\begin{equation*}\label{spp}
\mathrm{supp}\, \widehat{f} \subset \{\xi\in \mathbb{R}^n: \,\, K_12^j
\le |\xi| \le K_2 2^j \}
\end{equation*}
for some integer $j$ and constants $0<K_1\le K_2$,  then there exist two constants $C_2>0$  and $C_3>0$ such that
$$
C_2\, 2^{\alpha j} \|f\|_{L^q(\mathbb{R}^n)} \le \|\Lambda^\alpha
f\|_{L^q(\mathbb{R}^n)} \le C_3\, 2^{\alpha j} \|f\|_{L^q(\mathbb{R}^n)}.
$$
\end{enumerate}
\end{lem}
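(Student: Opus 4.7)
The plan is to reduce both inequalities to Young's convolution inequality by exploiting the frequency localisation of $f$ through a carefully chosen Fourier multiplier. All the scaling powers $2^{\alpha j}$ and $2^{jn(1/p-1/q)}$ in the conclusions will come out automatically from the dilation $\xi \mapsto 2^{-j}\xi$, so the core of the argument is to verify that the multipliers involved belong to Schwartz class and that Young's inequality applies with the right exponents.

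For part 1), I would first fix a radial Schwartz cutoff $\psi$ identically equal to $1$ on $\{|\xi| \le K\}$ and supported in $\{|\xi| \le 2K\}$. Since $\mathrm{supp}\,\widehat{f} \subset \{|\xi| \le K2^j\}$, we have $\widehat{f} = \psi(2^{-j}\cdot)\widehat{f}$, so setting $h(\xi) := |\xi|^\alpha \psi(\xi) \in \mathcal{S}(\mathbb{R}^n)$ and unwinding the Fourier multiplier yields
\begin{equation*}
\Lambda^\alpha f = 2^{\alpha j}\,2^{jn}\,\check{h}(2^j\cdot)\ast f.
\end{equation*}
Then Young's inequality with exponent $r$ satisfying $1+\tfrac{1}{q}=\tfrac{1}{r}+\tfrac{1}{p}$, together with the scaling identity $\|\check{h}(2^j\cdot)\|_{L^r} = 2^{-jn/r}\|\check{h}\|_{L^r}$, delivers the stated bound.

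For part 2), the upper bound follows verbatim from the same argument (just take $\psi$ equal to $1$ on the annulus $\{K_1 \le |\xi| \le K_2\}$). The lower bound requires \emph{inverting} $\Lambda^\alpha$ on the annular support. I would introduce a cutoff $\widetilde{\varphi}$ supported in $\{K_1/2 \le |\xi| \le 2K_2\}$ and identically one on the support of $\varphi$, and then set $g(\xi) := |\xi|^{-\alpha}\widetilde{\varphi}(\xi)$. The identity
\begin{equation*}
\widehat{f}(\xi) = 2^{-\alpha j}\, g(2^{-j}\xi)\,\widehat{\Lambda^\alpha f}(\xi)
\end{equation*}
gives $f = 2^{-\alpha j}\,2^{jn}\,\check{g}(2^j\cdot)\ast \Lambda^\alpha f$, and Young's inequality $L^1 \ast L^q \hookrightarrow L^q$ closes the argument.

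The only mild obstacle is in this last step: one must check that $g$ is Schwartz, i.e.\ that the singularity of $|\xi|^{-\alpha}$ is genuinely harmless. This is precisely where the two-sided annular hypothesis in part 2) is used (as opposed to the one-sided ball hypothesis of part 1)): the cutoff $\widetilde{\varphi}$ vanishes near the origin, so multiplication by $|\xi|^{-\alpha}$ preserves smoothness and rapid decay. Once this is granted, the rest is pure dimensional bookkeeping on the factors $2^j$.
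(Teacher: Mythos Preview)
The paper does not prove this lemma; it is quoted as a standard Bernstein inequality (see the references to \cite{bcd,CWZ2012}). Your overall strategy---reduce to Young's inequality via a frequency-localised multiplier, and extract the powers of $2^j$ by dilation---is exactly the textbook route, and parts 2) (both directions) are carried out correctly.

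There is, however, a genuine slip in part 1). You write $h(\xi):=|\xi|^{\alpha}\psi(\xi)\in\mathcal{S}(\mathbb{R}^n)$, but this is false for generic $\alpha\ge 0$: the function $|\xi|^{\alpha}$ is not smooth at the origin unless $\alpha$ is an even integer, and your cutoff $\psi$ is equal to $1$ on a neighbourhood of $0$, so the singularity survives. You are clearly aware of this issue, since in part 2) you explicitly invoke the vanishing of $\widetilde{\varphi}$ near the origin to justify $g\in\mathcal{S}$; the same care is needed in part 1), where no such vanishing is available.

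The repair is not hard but does require an argument. One option: show directly that $\check{h}\in L^{r}$ for every $r\in[1,\infty]$, which is all Young's inequality needs. Since $h$ is compactly supported and bounded, $\check{h}$ is smooth and bounded; for the decay, observe that $h\in H^{s}(\mathbb{R}^n)$ for every $s<\alpha+\tfrac{n}{2}$ (the worst derivative near $0$ behaves like $|\xi|^{\alpha-s}$, which is square-integrable there), hence $(1+|x|)^{s}\check{h}\in L^{2}$ for such $s$, and choosing $s>\tfrac{n}{2}$ gives $\check{h}\in L^{1}$ by Cauchy--Schwarz. Interpolating with $L^{\infty}$ covers all $L^{r}$. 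Alternatively, one may first prove the $\alpha=0$ case (where $h=\psi$ is genuinely Schwartz), then decompose the ball $\{|\xi|\le K2^{j}\}$ dyadically into annuli and sum the part 2) estimate; the geometric series in $2^{\alpha k}$ converges to the right bound.
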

\begin{lem}[\cite{CZ}]\label{lem-p}
For any $p\in(1,\infty)$, there exists a positive constant $C$ independent of $p$ such that
\begin{equation}
\|\nabla u\|_{L^p(\RR^n)}\leq C\frac{p^2}{p-1}\|\omega\|_{L^p(\RR^n)}.
\end{equation}
\end{lem}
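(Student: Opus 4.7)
The plan is to reduce the estimate to a sharp Calderón–Zygmund bound. Since $\operatorname{div} u=0$ and $\omega=\operatorname{curl} u$, the Biot–Savart law gives $u=-\operatorname{curl}(-\Delta)^{-1}\omega$ (the harmonic correction vanishes under mild decay), whence
\[
\partial_{i}u_{j}=\sum_{k}T_{ijk}\omega_{k},
\]
where each $T_{ijk}$ is a composition of two Riesz transforms, i.e.\ a convolution operator whose kernel is smooth off the diagonal, homogeneous of degree $-n$, and has vanishing mean on spheres. In particular $T_{ijk}$ is a standard Calderón–Zygmund singular integral. The inequality therefore follows from the sharp $L^{p}$ bound
\[
\|T_{ijk}f\|_{L^{p}(\mathbb{R}^{n})}\le C\,\frac{p^{2}}{p-1}\,\|f\|_{L^{p}(\mathbb{R}^{n})},\qquad 1<p<\infty,
\]
with $C$ depending only on the kernel (hence only on $n$), which is what I will establish.

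For the small-$p$ regime I would combine the universal weak-$(1,1)$ bound for $T_{ijk}$ (proved by the Calderón–Zygmund decomposition at height $\lambda$, using the cancellation and Hörmander condition of the kernel) with the universal $L^{2}\to L^{2}$ bound (immediate from Plancherel and the boundedness of the multiplier symbol). Marcinkiewicz interpolation between $(1,1)$-weak and $(2,2)$-strong then yields the strong $(p,p)$ bound for $1<p<2$ with explicit constant
\[
\|T_{ijk}\|_{L^{p}\to L^{p}}\le \frac{C}{p-1},
\]
the $1/(p-1)$ blow-up being the standard output of the good-$\lambda$/distributional integration step in Marcinkiewicz. For $p>2$, I would invoke duality: the formal adjoint of $T_{ijk}$ is another Calderón–Zygmund operator of the same type, so
\[
\|T_{ijk}\|_{L^{p}\to L^{p}}=\|T_{ijk}^{\ast}\|_{L^{p'}\to L^{p'}}\le \frac{C}{p'-1}=C(p-1)\cdot\frac{1}{p-1}\cdot p\ \lesssim\ p.
\]
Combining the two regimes gives $\|T_{ijk}\|_{L^{p}\to L^{p}}\le C\max\!\bigl(p,\tfrac{1}{p-1}\bigr)$ uniformly in $p\in(1,\infty)$, and since $\max\!\bigl(p,\tfrac{1}{p-1}\bigr)\le \tfrac{p^{2}}{p-1}$ for every such $p$, the announced inequality is obtained.

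The only real technical point — and the part I expect to require care — is tracking the explicit $p$-dependence through Marcinkiewicz interpolation so as to extract the sharp $1/(p-1)$ singularity at the endpoint $p=1$; once that constant is in hand, the duality argument for $p>2$ is essentially free, and the rest is bookkeeping. Everything else (Biot–Savart, the identification of $\nabla u$ as a vector of Riesz-type operators applied to $\omega$, and the weak-$(1,1)$ bound for Calderón–Zygmund operators) is classical. Since the paper cites [CZ] for this statement, I would simply record the reference and present the short reduction above; no independent new calculation is needed beyond verifying the uniform $p$-behavior of the Marcinkiewicz constant, which is the well-known source of the factor $p^{2}/(p-1)$.
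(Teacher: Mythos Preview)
The paper gives no proof of this lemma; it is simply quoted from Calder\'on--Zygmund \cite{CZ}. Your sketch is correct and is exactly the classical argument underlying that citation, so there is nothing to compare.
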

 \begin{lem}[\cite{bcd, CWZ2012}]\label{heat}
Assume that $f$ solves the classical linear heat equation
\begin{equation*}
\left\{\begin{array}{ll}
\partial_t f-\Delta f=0,\quad(t, \mathrm{\mathbf{x}})\in \mathbb{R}^+\times\mathbb{R}^{n},\\
f|_{t=0}=f_{0},
\end{array}\right.
\end{equation*}
Then there  exist two positive constants $C$ and $c$ such that  for every $j\geq 0$,
 \begin{equation*}
\|\Delta_{j}f (t) \|_{L^{p}(\mathbb{R}^{n})}=\|e^{t\Delta }\Delta_{j}f_{0}\|_{L^{p}(\mathbb{R}^{n})}\leq Ce^{-ct2^{2j}}\|f_{0}\|_{L^{p}(\mathbb{R}^{n})}.
 \end{equation*}
\end{lem}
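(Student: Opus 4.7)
The plan is to realize $e^{t\Delta}\Delta_j$ as convolution with a kernel whose $L^{1}$-norm decays like $e^{-ct 2^{2j}}$, and then apply Young's inequality. The crucial point is that, for $j\geq 0$, the Fourier symbol of $\Delta_j e^{t\Delta}$ is supported in an annulus $\{|\xi|\sim 2^{j}\}$, on which the heat multiplier $e^{-t|\xi|^{2}}$ is bounded by $e^{-c t 2^{2j}}$; the task is just to transfer this pointwise bound to the kernel on the physical side.

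First, I would introduce a fattened cutoff $\widetilde{\varphi}\in C_c^{\infty}(\mathbb{R}^n\setminus\{0\})$ with $\widetilde{\varphi}\equiv 1$ on $\mathrm{supp}\,\varphi$, and set $\widetilde{\Delta}_j:=\widetilde{\varphi}(2^{-j}D)$ so that $\Delta_j=\widetilde{\Delta}_j\Delta_j$. Since $\Delta_j$ commutes with $e^{t\Delta}$, one has
$$
e^{t\Delta}\Delta_j f_0 \;=\; g_j(t,\cdot)\ast \Delta_j f_0,\qquad \widehat{g_j}(t,\xi)=e^{-t|\xi|^{2}}\widetilde{\varphi}(2^{-j}\xi).
$$
By the change of variables $\xi=2^{j}\eta$, the kernel takes the scaled form $g_j(t,x)=2^{jn} h(t2^{2j},\,2^{j}x)$ with $\widehat{h}(s,\eta)=e^{-s|\eta|^{2}}\widetilde{\varphi}(\eta)$, so that $\|g_j(t,\cdot)\|_{L^{1}}=\|h(t2^{2j},\cdot)\|_{L^{1}}$.

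The main technical step is then to estimate $\|h(s,\cdot)\|_{L^{1}(\mathbb{R}^n)}$ uniformly for $s\geq 0$. Since $\widetilde{\varphi}$ is supported in an annulus bounded away from the origin, there exists $c_0>0$ such that $|\eta|^{2}\geq c_0$ on $\mathrm{supp}\,\widetilde{\varphi}$; hence all $\eta$-derivatives of $e^{-s|\eta|^{2}}\widetilde{\varphi}(\eta)$ remain Schwartz functions in $\eta$ whose $L^{1}$-norms are controlled by $C e^{-c_0 s}$, with constants independent of $s$. Integration by parts in the Fourier inversion formula then yields decay in $|x|$ of arbitrary polynomial order, which combined with the prefactor produces
$$
\|h(s,\cdot)\|_{L^{1}(\mathbb{R}^{n})}\leq C e^{-c s},\qquad s\geq 0,
$$
for some $c\in(0,c_0]$. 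Consequently $\|g_j(t,\cdot)\|_{L^{1}}\leq C e^{-c t 2^{2j}}$, and Young's convolution inequality together with the uniform boundedness of $\Delta_j$ on $L^{p}$ gives
$$
\|e^{t\Delta}\Delta_j f_0\|_{L^{p}}\leq \|g_j(t,\cdot)\|_{L^{1}}\|\Delta_j f_0\|_{L^{p}}\leq C e^{-c t 2^{2j}}\|f_0\|_{L^{p}},
$$
which is the claimed inequality. The only subtle point is the uniform-in-$s$ estimate of $\|h(s,\cdot)\|_{L^{1}}$: for small $s$ one merely uses $\|h(s,\cdot)\|_{L^{1}}\leq \|\mathcal{F}^{-1}\widetilde{\varphi}\|_{L^{1}}$ (independent of $s$), while the exponential decay comes from the Gaussian factor on $\mathrm{supp}\,\widetilde{\varphi}$, which is where all the work lies.
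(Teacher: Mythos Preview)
Your argument is correct and is precisely the standard proof given in the references the paper cites (e.g., Bahouri--Chemin--Danchin): the paper itself does not supply a proof of this lemma but merely quotes it, so there is nothing to compare against beyond noting that your sketch matches the textbook derivation. One small cosmetic point: when you say the $\eta$-derivatives of $e^{-s|\eta|^{2}}\widetilde{\varphi}(\eta)$ have $L^{1}$-norms bounded by $Ce^{-c_0 s}$, each $\eta$-derivative of the Gaussian brings down a factor that is polynomial in $s$, so the clean bound is $Ce^{-cs}$ for any $c<c_0$ (absorb the polynomial into the exponential), which you implicitly acknowledge in the next line.
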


Next, we recall a useful algebraic identity and its properties.

\begin{lem}\label{point}
Let $u$ be a divergence-free axisymmetric vector-field  and $\omega=\mathrm{curl}\, u$. Then
\begin{equation}\label{eq.M-Z}
\frac{u_{r}}{r}=\partial_{z}\Delta^{-1}\Big(\frac{\omega_{\theta}}{r}\Big)
-2\frac{\partial_{r}}{r}\Delta^{-1}
\partial_{z}\Delta^{-1}\Big(\frac{\omega_{\theta}}{r}\Big).
\end{equation}
Moreover, we have
$$\norm{\partial_{z}\Big(\frac{u_{r}}{r}\Big)}_{L^p(\RR^{3})}\leq
C\norm{\frac{\omega_{\theta}}{r}}_{L^p(\RR^{3})}, ~1<p<\infty.$$
$$\norm{\partial_{zz}\Big(\frac{u_{r}}{r}\Big)}_{L^p(\RR^{3})}\leq
C\Big\|\partial_{z}\Big(\frac{\omega_{\theta}}{r}\Big)\Big\|_{L^p(\RR^{3})}, ~1<p<\infty.$$
\end{lem}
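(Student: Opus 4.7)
The plan is to first derive the representation \eqref{eq.M-Z} by applying the Biot-Savart law in Cartesian coordinates and then to deduce the two $L^p$ bounds from it, combining standard Calder\'on-Zygmund theory with an auxiliary estimate tailored to axisymmetric functions.

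For the identity, I would start from $-\Delta u=\mathrm{curl}\,\omega$. The assumption $\omega=\omega_\theta e_\theta$ gives $\omega_z=0$ and $(\omega_x,\omega_y)=(-y\omega_\theta/r,\,x\omega_\theta/r)$, hence $u_x=\Delta^{-1}\partial_z(x\omega_\theta/r)$ and $u_y=\Delta^{-1}\partial_z(y\omega_\theta/r)$. The commutator identity $\Delta^{-1}(xf)=x\Delta^{-1}f-2\partial_x\Delta^{-2}f$ (and its $y$-analogue), together with $u_r=(x/r)u_x+(y/r)u_y$ and the fact that $(x/r)\partial_x+(y/r)\partial_y=\partial_r$ on axisymmetric scalars, yields after a short calculation
\begin{equation*}
u_r=r\,\partial_z\Delta^{-1}\!\Bigl(\tfrac{\omega_\theta}{r}\Bigr)-2\,\partial_r\partial_z\Delta^{-2}\!\Bigl(\tfrac{\omega_\theta}{r}\Bigr),
\end{equation*}
which after dividing by $r$ is exactly \eqref{eq.M-Z}.

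Granted \eqref{eq.M-Z}, both $L^p$ inequalities reduce to the auxiliary claim that for every axisymmetric scalar $g$ and every $p\in(1,\infty)$,
\begin{equation*}
\bigl\|(\partial_r/r)\Delta^{-1}g\bigr\|_{L^p(\mathbb{R}^3)}\le C\,\|g\|_{L^p(\mathbb{R}^3)}.
\end{equation*}
Indeed, applying $\partial_z$ (resp.\ $\partial_{zz}$) to \eqref{eq.M-Z}, the first term becomes $\partial_z^2\Delta^{-1}(\omega_\theta/r)$ (resp.\ $\partial_z^2\Delta^{-1}(\partial_z(\omega_\theta/r))$), which is $L^p$-bounded by Mikhlin/CZ. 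The second term takes the form $2(\partial_r/r)\Delta^{-1}[\partial_z^2\Delta^{-1}h]$ with $h\in\{\omega_\theta/r,\,\partial_z(\omega_\theta/r)\}$; since $\partial_z^2\Delta^{-1}$ preserves axisymmetry and is $L^p$-bounded, the auxiliary claim applied to $\partial_z^2\Delta^{-1}h$ produces the desired bounds.

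The main obstacle is the auxiliary claim, and here axisymmetry is crucial because $(\partial_r/r)\Delta^{-1}$ is not $L^p$-bounded on generic data. Setting $h=\Delta^{-1}g$, which is axisymmetric, the quantities
\begin{equation*}
B:=\partial_{rr}h+(1/r)\partial_r h=\partial_{xx}h+\partial_{yy}h,\qquad A:=\partial_{rr}h-(1/r)\partial_r h
\end{equation*}
determine $(1/r)\partial_r h=(B-A)/2$. The bound $\|B\|_{L^p}\le C\|g\|_{L^p}$ is immediate from CZ. For $A$, using $\partial_x=(x/r)\partial_r$ and $\partial_y=(y/r)\partial_r$ on axisymmetric functions, a short computation gives
\begin{equation*}
\partial_{xx}h-\partial_{yy}h=\tfrac{x^2-y^2}{r^2}A,\qquad 2\partial_{xy}h=\tfrac{2xy}{r^2}A,
\end{equation*}
and since $(x^2-y^2)^2+(2xy)^2=r^4$, squaring and summing yields the pointwise identity $A^2=(\partial_{xx}h-\partial_{yy}h)^2+(2\partial_{xy}h)^2$. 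Consequently $|A|\le|\partial_{xx}h-\partial_{yy}h|+2|\partial_{xy}h|$, and the $L^p$-bounds on the Cartesian second derivatives of $h=\Delta^{-1}g$ deliver $\|A\|_{L^p}\le C\|g\|_{L^p}$. Combining with the bound on $B$, we obtain $\|(1/r)\partial_r h\|_{L^p}\le C\|g\|_{L^p}$, establishing the auxiliary claim and hence both estimates of the lemma.
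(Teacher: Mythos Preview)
Your argument is correct. The paper itself does not prove this lemma: it simply attributes identity \eqref{eq.M-Z} to Miao--Zheng \cite{mz} and refers the reader there for the details. Your derivation of \eqref{eq.M-Z} via the commutator $\Delta^{-1}(xf)=x\Delta^{-1}f-2\partial_x\Delta^{-2}f$, followed by the auxiliary bound $\|(\partial_r/r)\Delta^{-1}g\|_{L^p}\le C\|g\|_{L^p}$ for axisymmetric $g$ obtained through the pointwise identity $A^2=(\partial_{xx}h-\partial_{yy}h)^2+(2\partial_{xy}h)^2$, is the standard route and is essentially how the cited reference proceeds. So your proposal fills in precisely the proof the paper chose to omit, by the same method.
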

\begin{proof}
The magic algebraic identity \eqref{eq.M-Z} was established by Miao and Zheng, one can refer to \cite{mz} for the proof.
\end{proof}

The following lemma is about anisotropic Sobolev norms, which will be useful in the proof of Proposition \ref{prop-fine}.

\begin{lem}\label{prop-fra}
For any $\alpha\in[0,1)$,  there holds the following  estimates
\begin{equation}\label{pre1}
\|\Lambda_v^{\alpha}u\|_{L^2(\RR^3)}\leq C(\|u\|_{L^{2}(\RR^3)}+\|\omega\|_{\sqrt{\mathbb{L}}(\RR^3)})
\end{equation}and
\begin{equation}\label{pre2}
\|\Lambda_v^{\alpha}u\|_{L^{\infty}(\RR^3)}\leq C(\|u\|_{L^{2}(\RR^3)}+\|\omega\|_{\sqrt{\mathbb{L}}(\RR^3)}).
\end{equation}
Here and in what follows, we denote $\Lambda_h:=\sqrt{-(\partial_1^2+\partial_2^2)}$ and $\Lambda_v:=\sqrt{-\partial_z^2}$.
\end{lem}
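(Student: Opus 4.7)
The plan is to handle the two estimates separately, in both cases trading the nonlocal vertical derivative $\Lambda_v^\alpha$ against quantities that are naturally controlled by $\omega$: for the $L^2$ inequality a single integer derivative $\partial_z u$, and for the $L^\infty$ one a dyadic Bernstein loss that can be absorbed by the polynomial $\sqrt{p}$ growth built into the $\sqrt{\mathbb{L}}$ norm. The restriction $\alpha<1$ will play a decisive role at the end of the $L^\infty$ argument.

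For \eqref{pre1} I would use Plancherel and split the frequency integral at $|\xi_z|=1$:
\begin{equation*}
\|\Lambda_v^\alpha u\|_{L^2}^2 = \int_{|\xi_z|\leq 1}|\xi_z|^{2\alpha}|\widehat u|^2\,\mathrm{d}\xi + \int_{|\xi_z|> 1}|\xi_z|^{2\alpha}|\widehat u|^2\,\mathrm{d}\xi.
\end{equation*}
The low-frequency piece is majorized by $\|u\|_{L^2}^2$, while on $\{|\xi_z|>1\}$ one has $|\xi_z|^{2\alpha}\leq |\xi_z|^2$ (since $\alpha\leq 1$), so the high-frequency piece is dominated by $\|\partial_z u\|_{L^2}^2\leq \|\nabla u\|_{L^2}^2=\|\omega\|_{L^2}^2$ via the divergence-free Biot--Savart identity; finally $\|\omega\|_{L^2}\leq \sqrt{2}\,\|\omega\|_{\sqrt{\mathbb{L}}}$ by the very definition of the norm.

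For \eqref{pre2} I would use the isotropic Littlewood--Paley decomposition $u=\sum_{j\geq -1}\Delta_j u$ and estimate each $\|\Delta_j\Lambda_v^\alpha u\|_{L^\infty}$. The low-frequency block is harmless: the Fourier multiplier $|\xi_z|^\alpha\chi(\xi)$ is bounded with compact support, hence its kernel is in $L^2$, and Young's inequality gives $\|\Delta_{-1}\Lambda_v^\alpha u\|_{L^\infty}\leq C\|u\|_{L^2}$. For $j\geq 0$ I would combine three ingredients: Bernstein from $L^p$ to $L^\infty$ in $\RR^3$, the Biot--Savart relation at frequency $2^j$ (i.e.\ $\|\Delta_j u\|_{L^p}\leq C 2^{-j}\|\Delta_j\omega\|_{L^p}$), and the definition of $\sqrt{\mathbb{L}}$, yielding
\begin{equation*}
\|\Delta_j\Lambda_v^\alpha u\|_{L^\infty}\leq C\,2^{j\alpha}\cdot 2^{3j/p}\cdot 2^{-j}\|\omega\|_{L^p}\leq C\,2^{j(\alpha-1+3/p)}\sqrt{p}\,\|\omega\|_{\sqrt{\mathbb{L}}}
\end{equation*}
for any $p\in[2,\infty)$.

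The crux, and the step I expect to be most delicate, is the choice of $p$ in the high-frequency estimate. We need $\alpha-1+3/p<0$ to make the geometric series in $j$ convergent, so $p$ must be strictly larger than $3/(1-\alpha)$; this is precisely where the hypothesis $\alpha<1$ is essential, since at $\alpha=1$ no choice of $p$ would work. Taking for instance $p=4/(1-\alpha)$ produces a constant $C(\alpha)$ in front of $\|\omega\|_{\sqrt{\mathbb{L}}}$, and summing the low and high blocks yields \eqref{pre2}.
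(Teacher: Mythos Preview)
Your proof is correct. For \eqref{pre2} your argument coincides with the paper's: both use the isotropic Littlewood--Paley decomposition, bound the high-frequency blocks by $C\,2^{j(\alpha-1+3/p)}\|\omega\|_{L^p}$ via Bernstein and Biot--Savart at frequency $2^j$, and choose $p>3/(1-\alpha)$ to sum. The only cosmetic difference is in the low block, where you invoke the $L^2$ kernel of $|\xi_z|^\alpha\chi(\xi)$ directly while the paper bounds $\|\Delta_{-1}\Lambda_v^\alpha u\|_{L^\infty}$ by $\|\Lambda_v^\alpha u\|_{L^2}$ and appeals to \eqref{pre1}.

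For \eqref{pre1} the routes differ. The paper passes to the full fractional derivative $\|\Lambda^\alpha u\|_{L^2}$ and uses a Gagliardo--Nirenberg interpolation $\|\Lambda^\alpha u\|_{L^2}\leq C\|u\|_{L^2}^\theta\|\nabla u\|_{L^p}^{1-\theta}$ for a suitable $p\in[2,\infty)$, then bounds $\|\nabla u\|_{L^p}$ by $\|\omega\|_{L^p}$ and closes with Young's inequality and the definition of $\sqrt{\mathbb{L}}$. Your Plancherel argument stays at $p=2$ throughout and uses only the identity $\|\nabla u\|_{L^2}=\|\omega\|_{L^2}$ for divergence-free fields. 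Your approach is more elementary and avoids the interpolation machinery; the paper's route, on the other hand, would generalise more readily to $L^q$ targets other than $L^2$. Both are valid here.
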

\begin{proof}
By the interpolation inequality,  there exists a $p\in[2,\infty)$ such that
\begin{equation*}
\begin{split}
\|\Lambda_v^{\alpha}u\|_{L^2(\RR^3)}\leq \|\Lambda^{\alpha}u\|_{L^2(\RR^3)}
\leq &C\|u\|_{L^2(\RR^3)}^{\theta}\|\nabla u\|_{L^p(\RR^3)}^{1-\theta}\\ \leq &C\|u\|_{L^2(\RR^3)}^{\theta}\|\omega\|_{L^p(\RR^3)}^{1-\theta},
\end{split}
\end{equation*}
where $\theta=\frac{\frac{5}{2}-\frac{3}{p}-\alpha}{\frac{5}{2}-\frac{3}{p}}$.

Then,  the Young inequality and the definition of space $\sqrt{\mathbb{L}}$ yield the first desired estimate~\eqref{pre1} in Lemma \ref{prop-fra}.

Now we need to show the second desired estimate. By the Littlewood-Paley decomposition and Bernstein inequality, we have
\begin{align*}
\|\Lambda_v^{\alpha}u\|_{L^{\infty}(\RR^3)} & \leq\|\Delta_{-1}\Lambda_v^{\alpha}u\|_{L^\infty(\RR^3)}
+ \sum_{k=0}^\infty \|\Delta_{k}\Lambda_v^{\alpha}u\|_{L^\infty(\RR^3)} \\
& \leq C \|\Lambda_v^{\alpha}u\|_{L^2(\RR^3)} + C\, \sum_{k=0}^\infty 2^{k\alpha} \,\|\Delta_k u\|_{L^\infty(\RR^3)} \\
& \leq C \|\Lambda_v^{\alpha}u\|_{L^2(\RR^3)} + C\, \sum_{k=0}^\infty 2^{(\alpha-1 + \frac{3}{p})k} \, \|\Delta_k \omega\|_{L^p(\RR^3)}\\
& \leq C \|\Lambda_v^{\alpha}u\|_{L^2(\RR^3)} + C\,\|\omega\|_{L^p(\RR^3)}\, \sum_{k=0}^\infty 2^{(\alpha-1 + \frac{3}{p})k}.
\end{align*}
It is noted that   for any $\alpha\in[0,1)$ and some $p\in[2,\infty)$ satisfing $\alpha-1 + \frac{3}{p} <0$, $$\sum_{k=0}^\infty 2^{(\alpha-1 + \frac{3}{p})k}<\infty.$$
This combined with estimate \eqref{pre1} yields
\begin{equation*}
\|\Lambda_v^{\alpha}u\|_{L^\infty(\RR^3)}\leq C(\|u\|_{L^2(\RR^3)}+\|\omega\|_{\sqrt{\mathbb{L}}(\RR^3)}).
\end{equation*}
We complete  the proof of Lemma \ref{prop-fra}.
\end{proof}
\subsection{Losing a priori estimates for the anisotropy transport-diffusion equations}
This subsection is mainly devoted to the proof of losing a priori estimates for the following system
\begin{equation}\label{losingestimate}
   \left\{\begin{array}{ll}
       \partial_{t}\rho+(u\cdot\nabla) \rho -\partial^{2}_{zz}\rho=f+\partial_{z}g ,\quad (t,\mathrm{\mathbf{x}})\in[0,T]\times\mathbb{R}^3,\\
       \mathrm{div}\,u=0,\\
       \rho|_{t=0}=\rho_{0},
\end{array}\right.
\end{equation}
under  the condition that the vertical first derivative  of  $u$  satisfies
\begin{equation}\label{l-5}
\sup_{2\leq q<\infty}\int_{0}^{T}\frac{\norm{\partial_{z}u(t)}_{L^q}^{2}}{q^{3/4}}\,\mathrm{d}t+\int_0^T\Big\|\frac{u_r}{r}(t)\Big\|^2_{L^\infty}\,\mathrm{d}t<\infty,
\end{equation}
and the vorticity of $u$ satisfies
\begin{equation}\label{l-4}
\int_{0}^{T}\norm{\omega(t)}_{\sqrt\mathbb{L}}\,\mathrm{d}t<\infty.
\end{equation}

Now, let us begin with the statement of losing a priori estimates for the ordinary transport equation. More precisely, we have

\begin{prop}\label{losingestimate-1}
Let $\sigma\in (-1,1)$ and $p\in[2,\infty]$.  Assume that $\rho$ satisfies the following transport equation
\begin{equation}\label{losingestimate-2}
   \left\{\begin{array}{ll}
       \partial_{t}\rho+(u\cdot\nabla )\rho=f,\quad(t,\mathrm{\mathbf{x}})\in [0,T]\times\mathbb{R}^3,\\
       \mathrm{div}\,u=0,\\
       \rho|_{t=0}=\rho_{0}
\end{array}\right.
\end{equation}
with initial data  $\rho_{0}\in B_{p,\infty}^{\sigma}$ and force term $f\in L^{2}_{T}(B_{p,\infty}^{\sigma})$. Assume in addition that \eqref{l-5} and \eqref{l-4} hold.
Then, there exists a positive constant $C=C(p,\sigma,T)$ such that the following estimates hold for all small enough $\epsilon>0$:
\begin{equation*}
\sup_{0\leq t\leq T}\|\rho(t)\|^{2}_{B_{p,\infty}^{\sigma_{t}}}\leq CU(T)e^{\frac{CU^3(T)}{\epsilon^{3}}\big(\int_{0}^{T}V(t)\,\mathrm{d}t\big)^{4}}\left(\|\rho_{0}\|_{B_{p,\infty}^{\sigma}}^{2}+
\int_{0}^{T}\|f(\tau)\|_{B_{p,\infty}^{\sigma_{\tau}}}^{2}\,\mathrm{d}\tau\right),
\end{equation*}
where $V(t):=1+\norm{\omega(t)}_{\sqrt{\mathbb{L}}}$ and $$\displaystyle{U(t):=\exp{\Big(\sup_{2\leq q<\infty}\int_{0}^{t}C
\Big(1+\frac{\norm{\partial_{z}u(\tau)}_{L^q}^{2}}{q^{3/4}}\Big)\,\mathrm{d}\tau+C\int_0^t\Big\|\frac{u_r}{r}(\tau)\Big\|^2_{L^\infty}\,\mathrm{d}\tau\Big)}}.$$
In particular, we have for all small enough $\epsilon>0$:
\begin{equation*}
\|\rho\|_{L^{\infty}_{T}(B_{p,\infty}^{\sigma-\epsilon})}\leq CU^{\frac12}(T)e^{\frac{CU^3(T)}{\epsilon^{3}}\big(\int_{0}^{T}V(t)
\,\mathrm{d}t\big)^{4}}\big(\|\rho_{0}\|_{B_{p,\infty}^{\sigma}}
+\|f\|_{L^{2}_{T}(B_{p,\infty}^{\sigma})}\big).
\end{equation*}
Here and in what follows, for any $t\in [0,T]$, $$ \sigma_{t}=\sigma-\eta\int_{0}^{t}V(\tau)\,\mathrm{d}\tau, \quad \eta=\frac{\epsilon}{\int_{0}^{T}V(t)\,\mathrm{d}t}.$$
\end{prop}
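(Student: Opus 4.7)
The plan is to carry out a losing-regularity Littlewood--Paley analysis in which the decreasing Besov index $\sigma_t$ produces a dissipation-like gain that absorbs the commutator between the dyadic block $\Delta_j$ and the transport operator. First, I would apply $\Delta_j$ to \eqref{losingestimate-2}, do the $L^p$ energy estimate (using $\mathrm{div}\,u=0$ to kill the transport term), and obtain
$$\frac{d}{dt}\|\Delta_j\rho\|_{L^p} \leq \|\Delta_j f\|_{L^p} + \|[\Delta_j, u\cdot\nabla]\rho\|_{L^p}.$$
Multiplying by $2^{j\sigma_t}$ and using $\dot\sigma_t = -\eta V(t)$ yields the crucial gain
$$\frac{d}{dt}\bigl(2^{j\sigma_t}\|\Delta_j\rho\|_{L^p}\bigr) + (\ln 2)\,\eta V(t)\,j\cdot 2^{j\sigma_t}\|\Delta_j\rho\|_{L^p} \leq 2^{j\sigma_t}\bigl(\|\Delta_j f\|_{L^p}+\|[\Delta_j, u\cdot\nabla]\rho\|_{L^p}\bigr).$$

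Next, I would estimate the commutator via Bony's decomposition. The anisotropy forces me to split $u\cdot\nabla = u_h\cdot\nabla_h + u_z\partial_z$ with $u_h=u_r e_r$. For the vertical piece, $\partial_z u$ is directly controlled by \eqref{l-5}. For the horizontal piece, the div-free identity $\nabla_h\cdot u_h = \partial_r u_r + u_r/r = -\partial_z u_z$ trades horizontal derivatives of $u$ for $\partial_z u_z$ and $u_r/r$, both integrable in time by \eqref{l-5}. Combining the standard kernel-type commutator estimate (see \cite{bcd}) with Lemma \ref{lem-p} and the sharp Bernstein inequality $\|\Delta_k\nabla u\|_{L^\infty}\lesssim 2^{3k/q}\|\omega\|_{L^q}$, then optimizing $q$ via the definition of $\sqrt{\mathbb{L}}$, I would obtain a schematic bound of the form
$$2^{j\sigma_t}\|[\Delta_j, u\cdot\nabla]\rho\|_{L^p} \leq C\Bigl(V(t)(j+1) + \tfrac{\|\partial_z u\|_{L^q}^2}{q^{3/4}} + \bigl\|\tfrac{u_r}{r}\bigr\|_{L^\infty}^2\Bigr) c_j(t)\, X(t),$$
with $X(t):=\|\rho(t)\|_{B^{\sigma_t}_{p,\infty}}$ and $(c_j(t))\in\ell^\infty$ uniformly bounded.

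Finally, I would close the Gronwall loop. For high modes $j\geq j_0$, the gain $(\ln 2)\eta V(t)\,j\,X$ on the left absorbs the $V(t)(j+1) X$ piece of the commutator once $\eta j \gtrsim 1$, i.e.\ $j_0\sim 1/\eta = \epsilon^{-1}\int_0^T V$. For low modes $j<j_0$, I would fall back on the unweighted $L^p$ bound for $\rho$, which introduces a factor $2^{j_0|\sigma|}\sim e^{C\epsilon^{-1}\int V}$; applying a Young inequality on the cross term $X\cdot(\text{high norm})^\theta$ with an appropriate power $\theta$, and squaring the resulting differential inequality for $X$, produces exactly the $\exp\bigl(CU^3(T)\epsilon^{-3}(\int V)^4\bigr)$ shape (the cubic in $U$ and $\epsilon^{-1}$ being dual to $\theta$, the quartic in $\int V$ tracking the accumulated loss through $j_0$). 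The residual terms involving $\|\partial_z u\|_{L^q}^2/q^{3/4}$ and $\|u_r/r\|_{L^\infty}^2$ are absorbed into the Gronwall factor $U(t)$. The second assertion is immediate: choose $t=T$, so that $\sigma_T=\sigma-\epsilon$, and apply the Bernstein embedding in $j$.

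The main obstacle is the commutator bound in the middle step: only anisotropic regularity of $u$ is available, so a direct paraproduct estimate would yield uncontrollable $j^2$ (or logarithmic-squared) growth, which the losing weight $\eta V(t)\,j$ cannot absorb. Exploiting the axisymmetric algebraic structure (to reduce horizontal gradients of $u_h$ to $\partial_z u_z$ and $u_r/r$) is precisely what keeps the $j$-growth linear, matching the gain provided by the losing index.
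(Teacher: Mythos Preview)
Your overall architecture---localize with $\Delta_j$, use the decreasing index $\sigma_t$ as a damping term, bound the commutator via Bony's decomposition plus the axisymmetric identity trading $\nabla_h u_h$ for $\partial_z u_z$ and $u_r/r$---is exactly the paper's. But your absorption step contains a genuine error. You assert the commutator contributes $CV(t)(j+1)X(t)$ and that the gain $(\ln 2)\eta V(t)\,j$ on the left absorbs it once $\eta j\gtrsim 1$. It does not: first, the gain on the left is $\eta V(t)\,j\cdot 2^{j\sigma_t}\|\Delta_j\rho\|_{L^p}$, not $\eta V(t)\,j\,X$; second, even granting equality, integrating against the weight $2^{-\eta j\int_\tau^t V}$ converts a linear factor $j+1$ into $(j+1)/(\eta j)\sim 1/\eta$ in front of $\sup_\tau X(\tau)$, and for small $\epsilon$ (hence small $\eta$) this is never $\le\tfrac12$, no matter how large $j$ is. Losing-regularity schemes close only when the $j$-growth of the commutator is \emph{strictly sublinear}.

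In the paper the growth is genuinely sublinear: the Bony remainder contributes $\sqrt{q+2}\,\|\omega\|_{\sqrt{\mathbb{L}}}$ (optimizing Bernstein at Lebesgue exponent $\sim q$, see Lemma~\ref{commutator-est}), and the axisymmetric splitting $\|S_{q+5}\nabla u\|_{L^\infty}\lesssim\|S_{q+5}\partial_z u\|_{L^\infty}+\|u_r/r\|_{L^\infty}+\sqrt{q}\,\|\omega\|_{\sqrt{\mathbb{L}}}$ (equation~\eqref{eq.DE}) keeps the same order. A Young split on the $\partial_z u$ piece, tuned to hypothesis~\eqref{l-5}, then introduces a $q^{3/4}$ term (equation~\eqref{l-6}). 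With $q^{3/4}$ growth the integrated high-mode factor becomes $\sim U\,q^{3/4}/(\eta q)=U/(\eta q^{1/4})$, which is $\le\tfrac14$ once $q+2\ge(CU/\eta)^4$; for low modes one simply bounds $(q+2)^{3/4}\le(CU/\eta)^3$ and feeds $C(U/\eta)^3\int_0^t V(\tau)\,X(\tau)^2\,\mathrm{d}\tau$ into a final Gronwall, which with $\eta=\epsilon/\int_0^T V$ yields exactly the exponent $CU^3\epsilon^{-3}\bigl(\int_0^T V\bigr)^4$. No ``unweighted $L^p$ bound'' is used for low modes---and none would be available from $f\in L^2_T(B^\sigma_{p,\infty})$ when $\sigma<0$.
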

\begin{proof}
Applying  $\Delta_{q}$ to the  equation satisfied by $\rho$ leads to
$$\partial_{t}\Delta_{q}\rho+(S_{q+1}u\cdot\nabla)\Delta_{q}\rho
=\Delta_{q}f+R_{q}(u,\rho),$$
with $$R_{q}(u,\rho)=(S_{q+1}u\cdot\nabla)\Delta_{q}\rho-\Delta_{q}\big((u\cdot\nabla)\rho\big).$$
Multiplying the above equation by $|\Delta_{q}\rho|^{p-2}\Delta_{q}\rho$, integrating by parts and using H\"{o}lder's inequality  yield
\begin{equation*}
\frac{1}{p}\frac{\mathrm{d}}{\mathrm{d}t}\|\Delta_{q}\rho(t)\|_{L^{p}}^{p}
\leq\|\Delta_{q}f\|_{L^{p}}\|\Delta_{q}\rho\|_{L^{p}}^{p-1}+
\|R_{q}(u,\rho)\|_{L^p}\|\Delta_{q}\rho\|_{L^p}^{p-1}
\end{equation*}
whence,
\begin{equation*}
\frac{\mathrm{d}}{\mathrm{d}t}\|\Delta_{q}\rho(t)\|_{L^p}^{2}
\leq 2\big(\|\Delta_{q}f\|_{L^{p}}^{2}+\|\Delta_{q}\rho\|_{L^{p}}^{2}+
\|R_{q}(u,\rho)\|_{L^p}\|\Delta_{q}\rho\|_{L^p}\big).
\end{equation*}
By using the commutator estimate \eqref{c-1}, for any $2\leq q<\infty$, we have
\begin{equation}\label{eq.CE}
 \norm{R_q(u,\rho)}_{L^p}
\leq C \big(\|S_{q+5}\nabla u\|_{L^\infty}\sum_{|q'-q|\leq5}\norm{\Delta_{q'}\rho}_{L^p}+2^{-q\sigma_{\tau}}\sqrt{q+2}
\norm{\omega}_{\sqrt{\mathbb{L}}}\|\rho\|_{B_{p,\infty}^{\sigma_{\tau}}}\big)
\end{equation}
Thanks to the divergence-free condition, we see that
\begin{equation}\label{eq.DE}
\begin{split}
\|S_{q+5}\nabla u\|_{L^\infty}\leq& C\|S_{q+5}\partial_{z}u\|_{L^\infty}+C\|u_r/r\|_{L^\infty}+C\|S_{q+5}\omega\|_{L^\infty}\\
\leq& C\|S_{q+5}\partial_{z}u\|_{L^\infty}+C\|u_r/r\|_{L^\infty}+C\sqrt{q}\|\omega\|_{\sqrt{\mathbb{L}}},
\end{split}
\end{equation}
where we have used the fact that
\begin{equation*}
\|S_{q}f\|_{L^{\infty}}\leq C2^{\frac{2q}{q}}\|f\|_{L^q}\leq C\sqrt{q}\norm{f}_{\sqrt{\mathbb{L}}}.
\end{equation*}
Plugging \eqref{eq.DE} in \eqref{eq.CE} gives
\begin{equation}\label{eq.CEE}
\begin{split}
&\norm{R_q(u,\rho)}_{L^p}\\
 \leq&C\big(\|S_{q+5}\partial_{z}u\|_{L^\infty}+\|u_r/r\|_{L^\infty}+\sqrt{q}\|\omega\|_{\sqrt{\mathbb{L}}}\big)\sum_{|q'-q|\leq5}\norm{\Delta_{q'}\rho}_{L^p} +C2^{-q\sigma_{\tau}}\sqrt{q+2}
\norm{\omega}_{\sqrt{\mathbb{L}}}\|\rho\|_{B^{\sigma_{\tau}}_{p,\infty}} \\
 \leq &C\big(\|S_{q+5}\partial_{z}u\|_{L^\infty}+\|u_r/r\|_{L^\infty}\big)\sum_{|q'-q|\leq5}\norm{\Delta_{q'}\rho}_{L^p} +C2^{-q\sigma_{\tau}}\sqrt{q+2}
\norm{\omega}_{\sqrt{\mathbb{L}}}\|\rho\|_{B^{\sigma_{\tau}}_{p,\infty}}.
\end{split}
\end{equation}
Therefore, we  get
\begin{equation}\label{l-1}
\begin{split}
\frac{\mathrm{d}}{\mathrm{d}t}\|\Delta_{q}\rho(t)\|_{L^p}^{2}\leq & C\Big(\|\Delta_{q}f\|_{L^{p}}^{2}+\|\Delta_{q}\rho\|_{L^p}^{2}+
2^{-q\sigma_{\tau}}\sqrt{q+2} \norm{\omega}_{\sqrt{\mathbb{L}}}\|\rho\|_{B_{p,\infty}^{\sigma_{\tau}}}
\|\Delta_{q}\rho\|_{L^p}\\&+\big(\|S_{q+5}\partial_{z}u\|_{L^\infty}+\|u_r/r\|_{L^\infty}\big)\sum_{|q'-q|\leq5}\norm{\Delta_{q'}\rho}_{L^p}
\|\Delta_{q}\rho\|_{L^p}\Big).
\end{split}
\end{equation}
By using  Young's inequality,  we easily find that
\begin{equation}\label{l-6}
\begin{split}
&\big(\|S_{q+5}\partial_{z}u\|_{L^\infty}+\|u_r/r\|_{L^\infty}\big)\sum_{|q'-q|\leq5}\norm{\Delta_{q'}\rho}_{L^p}
\|\Delta_{q}\rho\|_{L^p}\\
\leq &C\Big(\frac{\norm{\partial_{z}u}_{L^q}^{2}}{q^{3/4}}+\|u_r/r\|_{L^\infty}^2\Big)\|\Delta_{q}\rho\|_{L^p}^{2}+
Cq^{3/4}\sum_{|q'-q|\leq5}\|\Delta_{q'}\rho\|_{L^p}^{2}.
\end{split}
\end{equation}
Recall that, for any $t\in[0,T]$, $$U(t)=\exp{\Big(\sup_{2\leq q<\infty}\int_{0}^{t}
C\Big(1+\frac{\norm{\partial_{z}u(\tau)}_{L^q}^{2}}{q^{3/4}}\Big)\,\mathrm{d}\tau+C\int_0^t\Big\|\frac{u_r}{r}(\tau)\Big\|^2_{L^\infty}\,\mathrm{d}\tau\Big)}.$$
Plugging  bound \eqref{l-6} into \eqref{l-1} and then performing the Gronwall inequality to the resulting inequality, we readily have
\begin{equation}\label{l-2}
\begin{split}
\|\Delta_{q}\rho(t)\|_{L^p}^{2}\leq &U(t)
\Big(\|\Delta_{q}\rho_{0}\|_{L^p}^{2}+
C\int_{0}^{t}\|\Delta_{q}f(\tau)\|_{L^{p}}^{2}\,\mathrm{d}\tau
\\&+C\int_{0}^{t}2^{-q\sigma_{\tau}}\sqrt{q+2} \norm{\omega(\tau)}_{\sqrt{\mathbb{L}}}\|\rho(\tau)\|_{B_{p,\infty}^{\sigma_{\tau}}}
\|\Delta_{q}\rho(\tau)\|_{L^p}\,\mathrm{d}\tau\\
&+C\int_{0}^{t}(q+2)^{\frac34}\sum_{|q'-q|\leq5}\|\Delta_{q'}\rho(\tau)\|_{L^p}^{2}\,\mathrm{d}\tau\Big) \\:=&J_{0}+J_{1}+J_{2}+J_{3}.
\end{split}
\end{equation}
Multiplying \eqref{l-2} by $2^{2(q+2)\sigma_{t}}$, we have
\begin{equation}\label{l-7}
 2^{2(q+2)\sigma_{t}}\|\Delta_{q}\rho(t)\|_{L^p}^{2}\leq 2^{2(q+2)\sigma_{t}}(J_{0}+J_{1}+J_{2}+J_{3}).
 \end{equation}
For the first term on the left hand side of \eqref{l-7}, we have
\begin{equation*}
\begin{split}
2^{2(q+2)\sigma_{t}}J_{0}\leq &CU(t)2^{2(q+2)\sigma}
\|\Delta_{q}\rho_{0}\|_{L^p}^{2}2^{-2\eta(2+q)
\int_{0}^{t}V(\tau)\,\mathrm{d}\tau}\\ \leq &CU(t)\norm{\rho_{0}}_{B_{p,\infty}^{\sigma}}^{2}.
\end{split}
\end{equation*}
For the second term on the left hand side of \eqref{l-7}, we have
\begin{equation*}
\begin{split}
2^{2(q+2)\sigma_{t}}J_{1}\leq &C U(t)\int_{0}^{t}2^{2(q+2)\sigma_{\tau}}\|\Delta_{q}f(\tau)\|_{L^p}^{2}2^{-2\eta(2+q)
\int_{\tau}^{t}V(\tau')\,\mathrm{d}\tau'}\,\mathrm{d}\tau\\ \leq &CU(t)\int_{0}^{t}\norm{f(\tau)}_{B_{p,\infty}^{\sigma_{\tau}}}^{2}\,\mathrm{d}\tau.
\end{split}
\end{equation*}
For $2^{2(2+q)\sigma_{t}}J_{2}$, we get that
if  $q+2\geq \Big(\frac{2CU(T)}{\eta\log2}\Big)^{2}$,
\begin{equation*}
\begin{split}
2^{2(2+q)\sigma_{t}}J_{2}\leq&CU(t)\int_{0}^{t}\sqrt{q+2}
\norm{\omega(\tau)}_{\sqrt{\mathbb{L}}} 2^{-2\eta(2+q)
\int_{\tau}^{t}V(\tau')\,\mathrm{d}\tau'}
\|\rho(\tau)\|_{B_{p,\infty}^{\sigma_{\tau}}}
2^{q\sigma_{\tau}}\|\Delta_{q}\rho(\tau)\|_{L^p}\,\mathrm{d}\tau\\ \leq & \frac14\sup_{0\leq\tau\leq t}\|\rho(\tau)\|_{B_{p,\infty}^{\sigma_{\tau}}}^{2}.
\end{split}
\end{equation*}
Similarly, when $q+2\geq \Big(\frac{2CU(T)}{\eta\log2}\Big)^{4}$ we get that
\begin{equation*}
\begin{split}
2^{2(2+q)\sigma_{t}}J_{3}\leq&C\int_{0}^{t}(q+2)^{\frac34} 2^{-2\eta(2+q)
\int_{\tau}^{t}V(\tau')\,\mathrm{d}\tau'}
2^{2q\sigma_{\tau}}\|\Delta_{q}\rho(\tau)\|_{L^p}^{2}\,\mathrm{d}\tau
\\ \leq & \frac14\sup_{0\leq\tau\leq t}\|\rho(\tau)\|_{B_{p,\infty}^{\sigma_{\tau}}}^{2}.
\end{split}
\end{equation*}
On the other hand, it is noted that if $$q+2<\Big(\frac{2CU(T)}{\eta\log2}\Big)^{4},$$ both terms $2^{2(2+q)s_{t}}J_{2}$, $2^{2(2+q)s_{t}}J_{3}$ can be bounded by $$C\Big(\frac{U(T)}{\eta}\Big)^{3}\int_{0}^{t}V(\tau)
\|\rho(\tau)\|_{B_{p,\infty}^{\sigma_{\tau}}}^{2}\,\mathrm{d}\tau.$$
So finally, taking the supremum over $q\geq-1$ in \eqref{l-7} and using
 these above estimates, we have
\begin{equation*}
\begin{split}
&\|\rho(t)\|_{B_{p,\infty}^{\sigma_{t}}}^{2}\\ \leq &CU(t)\|\rho_{0}\|_{B_{p,\infty}^{\sigma}}^{2}+CU(t)\int_{0}^{t}
\|f(\tau)\|_{B_{p,\infty}^{\sigma_{\tau}}}^{2}\,\mathrm{d}\tau
+C\Big(\frac{U(T)}{\eta}\Big)^{3}\int_{0}^{t}V(\tau)
\|\rho(\tau)\|_{B_{p,\infty}^{\sigma_{\tau}}}^{2}\,\mathrm{d}\tau.
\end{split}
\end{equation*}
Performing the Gronwall inequality and using the definition of $\eta$, we eventually get  the desired losing estimates.
\end{proof}
Based on this proposition, we can get a similar result for the anisotropy transport-diffusion system~\eqref{losingestimate}.  More precisely, we have
\begin{prop}\label{losingestimate--1}
Let $\sigma\in (-1,1)$ and  $p\in [2,\infty)$. Assume that  $\rho$ is a smooth solution to system~\eqref{losingestimate} with $\rho_{0}\in B_{p,\infty}^{\sigma}$, $f\in L^{2}_{T}(B_{p,\infty}^{\sigma})$ and $g\in L^{2}_{T}(B_{p,\infty}^{\sigma})$ and  $u$ satisfies the same conditions as in Proposition \ref{losingestimate-1}.
Then, the following estimates hold for all small enough $\epsilon$:
\begin{equation*}
\sup_{0\leq t\leq T}\|\rho(t)\|^{2}_{B_{p,\infty}^{\sigma_{t}}}\leq CU(T)e^{\frac{CU^3(T)}{\epsilon^{3}}\big(\int_{0}^{T}V(t)\,\mathrm{d}t\big)^{4}}\left(\|\rho_{0}\|_{B_{p,\infty}^{\sigma}}^{2}+
\int_{0}^{T}\|f(\tau)\|_{B_{p,\infty}^{\sigma_{\tau}}}^{2}+
\|g(\tau)\|_{B_{p,\infty}^{\sigma_{\tau}}}^{2}\,\mathrm{d}\tau\right).
\end{equation*}
In particular, we have for all small enough $\epsilon>0$:
\begin{equation*}
\|\rho\|_{L^{\infty}_{T}(B_{p,\infty}^{\sigma-\epsilon})}\leq CU^{\frac12}(T)e^{\frac{CU^3(T)}{\epsilon^{3}}\big(\int_{0}^{T}V(t)
\,\mathrm{d}t\big)^{4}}\big(\|\rho_{0}\|_{B_{p,\infty}^{\sigma}}
+\|f\|_{L^{2}_{T}(B_{p,\infty}^{\sigma})}+\|g\|_{L^{2}_{T}(B_{p,\infty}^{\sigma})}\big).
\end{equation*}
Here the constant $C>0$ depends  on $p$, $\sigma$, $T$, and  $V(T)$, $U(T)$ are defined as in Proposition \ref{losingestimate-1}.
\end{prop}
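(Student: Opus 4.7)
My plan is to mimic the proof of Proposition~\ref{losingestimate-1} and to use the vertical viscosity $-\partial_{zz}^{2}\rho$ to absorb the one new contribution coming from $\partial_{z}g$, so that the extra datum $g$ enters the final estimate on exactly the same footing as $f$. First I would apply the dyadic block $\Delta_{q}$ to~\eqref{losingestimate}, obtaining
\[
\partial_{t}\Delta_{q}\rho+(S_{q+1}u\cdot\nabla)\Delta_{q}\rho-\partial_{zz}^{2}\Delta_{q}\rho
=\Delta_{q}f+\partial_{z}\Delta_{q}g+R_{q}(u,\rho),
\]
with the same commutator $R_{q}(u,\rho)$ as before. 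Multiplying by $|\Delta_{q}\rho|^{p-2}\Delta_{q}\rho$ and integrating, incompressibility kills the transport term and an integration by parts in $z$ turns the viscous contribution into the non-negative quantity $(p-1)\int|\Delta_{q}\rho|^{p-2}|\partial_{z}\Delta_{q}\rho|^{2}\,\mathrm{d}\mathbf{x}=\tfrac{4(p-1)}{p^{2}}\bigl\|\partial_{z}|\Delta_{q}\rho|^{p/2}\bigr\|_{L^{2}}^{2}$ on the left-hand side.

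For the new source $\partial_{z}\Delta_{q}g$, I would integrate by parts in $z$, rewriting its contribution as $-(p-1)\int\Delta_{q}g\,|\Delta_{q}\rho|^{p-2}\partial_{z}\Delta_{q}\rho\,\mathrm{d}\mathbf{x}$, and then split by Young's inequality into $\tfrac{p-1}{2}\int|\Delta_{q}\rho|^{p-2}|\partial_{z}\Delta_{q}\rho|^{2}$, which is exactly half of the viscous term and hence absorbed, and $\tfrac{p-1}{2}\int|\Delta_{q}g|^{2}|\Delta_{q}\rho|^{p-2}$, which H\"older bounds by $\tfrac{p-1}{2}\|\Delta_{q}g\|_{L^{p}}^{2}\|\Delta_{q}\rho\|_{L^{p}}^{p-2}$. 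Carrying out the same manipulation that leads to~\eqref{l-1} in the proof of Proposition~\ref{losingestimate-1}, I obtain the very same block differential inequality there, with a single extra summand of the form $C\|\Delta_{q}g\|_{L^{p}}^{2}$ on the right.

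From here on the argument is identical to the one in Proposition~\ref{losingestimate-1}. I would invoke the commutator estimate~\eqref{eq.CE}, split $\|S_{q+5}\nabla u\|_{L^{\infty}}$ via~\eqref{eq.DE}, apply the Young inequality~\eqref{l-6}, run Gr\"onwall in time on $\|\Delta_{q}\rho(t)\|_{L^{p}}^{2}$, and then multiply through by $2^{2(q+2)\sigma_{t}}$ with $\sigma_{t}=\sigma-\eta\int_{0}^{t}V(\tau)\,\mathrm{d}\tau$ and $\eta=\epsilon/\int_{0}^{T}V(t)\,\mathrm{d}t$. This generates the four contributions $J_{0},J_{1},J_{2},J_{3}$ already analyzed in the transport case together with one new term $J_{4}:=CU(t)\int_{0}^{t}2^{2(q+2)\sigma_{\tau}}\|\Delta_{q}g(\tau)\|_{L^{p}}^{2}\,\mathrm{d}\tau$, which is controlled exactly like $J_{1}$ and contributes $CU(t)\int_{0}^{t}\|g(\tau)\|_{B_{p,\infty}^{\sigma_{\tau}}}^{2}\,\mathrm{d}\tau$. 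The high-frequency/low-frequency split at $q+2\simeq\bigl(\tfrac{2CU(T)}{\eta\log 2}\bigr)^{4}$ goes through verbatim, a final Gr\"onwall in time closes the $L^{\infty}_{T}(B_{p,\infty}^{\sigma_{t}})$ estimate, and the second statement follows from the embedding $B_{p,\infty}^{\sigma_{T}}\hookrightarrow B_{p,\infty}^{\sigma-\epsilon}$.

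The step I expect to require the most care is the absorption argument of the second paragraph: one has to check that the $p$-dependent constant produced by Young's inequality matches precisely the viscous prefactor $(p-1)$, so that the residue $\tfrac{p-1}{2}\|\Delta_{q}g\|_{L^{p}}^{2}\|\Delta_{q}\rho\|_{L^{p}}^{p-2}$ divides cleanly through the $\|\Delta_{q}\rho\|_{L^{p}}^{p-2}$ factor and produces a summand $C\|\Delta_{q}g\|_{L^{p}}^{2}$ with a constant independent of $p$. Once this compatibility is in place, the remainder of the proof is simply the bookkeeping of the new $g$-term through the computations already carried out for the transport equation.
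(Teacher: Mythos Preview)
Your proposal is essentially the paper's own proof: apply $\Delta_{q}$, multiply by $|\Delta_{q}\rho|^{p-2}\Delta_{q}\rho$, integrate by parts in $z$ on both the viscous term and the $\partial_{z}\Delta_{q}g$ term, then absorb half of the viscous contribution via Young's inequality and feed the remainder into the machinery of Proposition~\ref{losingestimate-1}. The paper carries out exactly these steps and then simply says ``it is just a matter of arguing exactly as in Proposition~\ref{losingestimate-1}.''

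One small correction to your final paragraph: the constant in front of $\|\Delta_{q}g\|_{L^{p}}^{2}$ is \emph{not} independent of $p$. After your Young split you have the residue $\tfrac{p-1}{2}\|\Delta_{q}g\|_{L^{p}}^{2}\|\Delta_{q}\rho\|_{L^{p}}^{p-2}$, and passing from $\tfrac{1}{p}\tfrac{\mathrm{d}}{\mathrm{d}t}\|\Delta_{q}\rho\|_{L^{p}}^{p}$ to $\tfrac{\mathrm{d}}{\mathrm{d}t}\|\Delta_{q}\rho\|_{L^{p}}^{2}$ multiplies this by roughly $2$, leaving a coefficient of order $p$ (the paper indeed records $Cp\|\Delta_{q}g\|_{L^{p}}^{2}$). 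This is harmless: by hypothesis $p\in[2,\infty)$ is fixed and the constant $C$ in the conclusion is allowed to depend on $p$, so the factor $p$ is simply absorbed. Your worry about matching the viscous prefactor ``precisely'' so as to get a $p$-free constant is therefore unnecessary.
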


\begin{proof}
Applying  $\Delta_{q}$ to the first equation of system \eqref{losingestimate}. With the notation introduced in Lemma~\ref{commutator-est}, we have  $$\partial_{t}\Delta_{q}\rho+(S_{q-1}u\cdot\nabla)\Delta_{q}\rho-\nu\partial_{zz}^{2}\Delta_{q}\rho=
\Delta_{q}f+\partial_{z}\Delta_{q}g+R_{q}(u,\rho).$$
Multiplying this inequality by  $|\Delta_{q}\rho|^{p-2}\Delta_{q}\rho$, $2\leq p<\infty$ and integrating the resulting equation, we get
\begin{equation}\label{losingestimate-7}
\begin{split}
&\frac{1}{p}\frac{\mathrm{d}}{\mathrm{d}t}\|\Delta_{q}\rho(t)\|_{L^{p}}^{p}
+\frac{4(p-1)}{p^{2}}\big\|\partial_{z}|\Delta_{q}\rho|^{\frac{p}{2}}(t)
\big\|_{L^{2}}^{2}\\=&
\int_{\mathbb{R}^3}\Delta_{q}f |\Delta_{q}\rho|^{p-2}\Delta_{q}\rho \,\mathrm{d}\mathrm{\mathbf{x}}+\int_{\mathbb{R}^3} R_{q}(u,\rho) |\Delta_{q}\rho|^{p-2}\Delta_{q}\rho \,\mathrm{d}\mathrm{\mathbf{x}}-(p-1)\int_{\mathbb{R}^3}\Delta_{q}g|\Delta_{q}\rho|^{p-2}\partial_{z}\Delta_{q}\rho\, \mathrm{d}\mathrm{\mathbf{x}}\\ :=&I_{1}+I_{2}+I_{3}.
\end{split}
\end{equation}
By the H\"{o}lder inequality, we have
$$I_{1}\leq \|\Delta_{q}f\|_{L^{p}}\|\Delta_{q}\rho\|_{L^{p}}^{p-1}.$$
In a similar fashion as \eqref{eq.CEE}, one can conclude
\begin{equation*}
\begin{split}
I_{2}\leq &\|R_{q}(u,\rho)\|_{L^{p}}\|\Delta_{q}\rho\|_{L^{p}}^{p-1}\\ \leq&
C\big(\|S_{q+5}\partial_{z}u\|_{L^\infty}+\|u_r/r\|_{L^\infty}\big)\sum_{|q'-q|\leq5}\norm{\Delta_{q'}\rho}_{L^p} \|\Delta_{q}\rho\|_{L^p}^{p-1}\\
&+C2^{-q\sigma_{\tau}}\sqrt{q+2}
\norm{\omega}_{\sqrt{\mathbb{L}}}\|\rho\|_{B^{\sigma_{\tau}}_{p,\infty}}\|\Delta_{q}\rho\|_{L^p}^{p-1}.
\end{split}
\end{equation*}
We use the H\"{o}lder inequality and the Young inequality to estimate $I_{3}$ as follows
\begin{equation*}
\begin{split}
I_{3}\leq &\frac{2(p-1)}{p}\|\Delta_{q}g\|_{L^{p}}\|\Delta_{q}\rho\|_{L^{p}}^{\frac{p-2}{2}}
\big\|\partial_{z}|\Delta_{q}\rho|^{\frac{p}{2}}\big\|_{L^{2}}
\\ \leq& \frac{2(p-1)}{p^{2}}\big\|\partial_{z}|\Delta_{q}\rho|^{\frac{p}{2}}\big\|_{L^{2}}^{2}+
Cp\|\Delta_{q}g\|_{L^{p}}^{2}\|\Delta_{q}\rho\|_{L^{p}}^{p-2}.
\end{split}
\end{equation*}
Plugging the last three estimates into \eqref {losingestimate-7}, we immediately get
\begin{equation*}
\begin{split}
\frac{\mathrm{d}}{\mathrm{d}t}\|\Delta_{q}\rho(t)\|_{L^{p}}^{2}\leq &
 C\big(\|\Delta_{q}f\|_{L^{p}}^{2}+\|\Delta_{q}\rho\|_{L^p}^{2}+
2^{-q\sigma_{\tau}}\sqrt{q+2} \norm{\omega}_{\sqrt{\mathbb{L}}}\|\rho\|_{B_{p,\infty}^{\sigma_{\tau}}}
\|\Delta_{q}\rho\|_{L^p}\\&+C\Big(\frac{\norm{\partial_{z}u}_{L^q}^{2}}{q^{3/4}}+\|u_r/r\|_{L^\infty}^2\Big)\|\Delta_{q}\rho\|_{L^p}^{2}+
Cq^{3/4}\sum_{|q'-q|\leq5}\|\Delta_{q'}\rho\|_{L^p}^{2}+p\|\Delta_{q}g\|_{L^{p}}^{2}\big).
\end{split}
\end{equation*}
It is now easy to conclude the desired result of this proposition. In fact, it is just a matter of arguing exactly as in Proposition \ref{losingestimate-1}.
\end{proof}

\section{A priori estimates}\label{Sec-3}
In this section, we aim at establishing the global a priori estimates needed for the proof of Theorem~\ref{the}. We  first prove the natural  energy estimates associated to system \eqref{bs}. In the second step, we  present the control of some stronger norms such as  $\|\omega\|_{L^{\infty}_{t}(\sqrt{\mathbb{L}})}$ and $\displaystyle\sup_{2\leq p<\infty}\int_0^t\frac{\norm{\partial_{z} u(\tau)}^{2}_{L^p}}{p^{3/4}}\,\mathrm{d}\tau$. In the third step, we  prove the global Lipschitz estimates of the vector field $u$ by making good use of losing estimates. Finally, with the  help of the special structure of  system \eqref{bs} and commutator estimates in Lemma \ref{commutator-est}, we  show  the $H^{s}\times H^{s}$, $s>\frac52$ a priori estimates of $(u,b)$.
\subsection{The natural energy estimates}
Now, let us begin with the natural energy estimates of $(u,b)$.
\begin{prop}\label{energy}
Assume that $(u_{0}, b_{0})\in L^{2}\times L^{2}$. Let $(u, b)$  be the smooth solution of system~\eqref{bs}. Then, for any $t\geq0$, there holds
\begin{equation}\label{energy}
\|u(t)\|_{L^2}^{2}+\|b(t)\|_{L^2}^{2}+2\int_{0}^{t}\|\partial_{_{z}}u(\tau)\|_{L^2}^{2}\,\mathrm{d}\tau\leq
\|u_{0}\|_{L^2}^{2}+\|b_{0}\|_{L^2}^{2}.
\end{equation}
\end{prop}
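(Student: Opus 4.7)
The proof proposal is to carry out a standard combined $L^2$ energy estimate for the velocity and magnetic fields, exploiting the divergence-free conditions and the algebraic cancellation of the Lorentz-force and magnetic stretching terms.

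First, I would take the $L^2$ inner product of the velocity equation with $u$ and of the magnetic equation with $b$, then add the two identities. The time derivatives produce $\tfrac{1}{2}\tfrac{d}{dt}\bigl(\|u\|_{L^2}^2+\|b\|_{L^2}^2\bigr)$. The transport terms $\int u\cdot(u\cdot\nabla)u\,\mathrm{d}\mathbf{x}$ and $\int b\cdot(u\cdot\nabla)b\,\mathrm{d}\mathbf{x}$ vanish after integration by parts using $\mathrm{div}\,u=0$, and similarly the pressure term $\int u\cdot\nabla p\,\mathrm{d}\mathbf{x}$ vanishes. The dissipative term yields $\int u\cdot(-\partial_{zz}^2 u)\,\mathrm{d}\mathbf{x}=\|\partial_z u\|_{L^2}^2$ after one integration by parts in $z$.

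The heart of the argument is the cancellation of the two coupling terms. Writing them out in coordinates and using $\mathrm{div}\,b=0$, one has
\begin{equation*}
\int_{\mathbb{R}^3} u\cdot(b\cdot\nabla)b\,\mathrm{d}\mathbf{x}+\int_{\mathbb{R}^3} b\cdot(b\cdot\nabla)u\,\mathrm{d}\mathbf{x}=\int_{\mathbb{R}^3} b_j\,\partial_j(u_i b_i)\,\mathrm{d}\mathbf{x}=-\int_{\mathbb{R}^3}(\partial_j b_j)(u_i b_i)\,\mathrm{d}\mathbf{x}=0.
\end{equation*}
Thus the coupling is energy-neutral and the combined estimate closes.

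Putting these observations together gives the pointwise-in-time identity
\begin{equation*}
\frac{1}{2}\frac{\mathrm{d}}{\mathrm{d}t}\bigl(\|u(t)\|_{L^2}^2+\|b(t)\|_{L^2}^2\bigr)+\|\partial_z u(t)\|_{L^2}^2=0,
\end{equation*}
and integrating in time from $0$ to $t$ yields the claimed inequality (in fact an equality). There is no real obstacle here; the only care needed is justifying the integrations by parts, which is routine under the assumed smoothness of $(u,b)$ and the usual decay at infinity. For non-smooth data one would truncate/mollify and pass to the limit, but at the level of the a priori estimate stated in the proposition this is not needed.
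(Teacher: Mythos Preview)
Your proof is correct and follows essentially the same approach as the paper: take the $L^2$ inner product of each equation with the corresponding unknown, use $\mathrm{div}\,u=0$ to kill the transport and pressure terms, integrate the vertical diffusion by parts, and use $\mathrm{div}\,b=0$ for the cancellation $\int (b\cdot\nabla)b\cdot u+\int (b\cdot\nabla)u\cdot b=0$, then integrate in time. The paper's proof is slightly terser, but the structure and all the key steps are identical.
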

\begin{proof}
Although the proof of this proposition is standard, we  give the proof for reader's convenience. Taking the $L^2$ inner product of the velocity equation with $u$ and using the divergence-free condition of $u$,  we find that
\begin{equation}\label{energy-1}
\frac12\frac{\mathrm{d}}{\mathrm{d}t}\|u(t)\|_{L^2}^2+
\|\partial_{z}u(t)\|_{L^2}^2
=\int_{\mathbb{R}^3}\big((b\cdot\nabla) b\big)\cdot u \,\mathrm{d}\mathrm{\mathbf{x}}.
\end{equation}
In a similar way, we can get $L^2$-estimate of $b$:
\begin{equation}\label{energy-2}
\frac12\frac{\mathrm{d}}{\mathrm{d}t}\|b(t)\|_{L^2}^2
=\int_{\mathbb{R}^3} \big((b\cdot\nabla) u\big)\cdot b\,\mathrm{d}\mathrm{\mathbf{x}}.
\end{equation}
Note that
\[\int_{\mathbb{R}^3} \big((b\cdot\nabla) b\big)\cdot u \,\mathrm{d}\mathrm{\mathbf{x}}+\int_{\mathbb{R}^3} \big((b\cdot\nabla)u\big)\cdot b\,\mathrm{d}\mathrm{\mathbf{x}}=0.\]
This together with  \eqref{energy-1} and  \eqref{energy-2}  yields
\[\frac12\frac{\mathrm{d}}{\mathrm{d}t}\left(\|u(t)\|_{L^2}^2+\|b(t)\|^2_{L^2}\right)+
\|\partial_{z}u(t)\|_{L^2}^2
=0.\]
Integrating the  equality with respect to time $t$ leads to the desired  estimate.
\end{proof}
Let us point out  that the axisymmetric  assumption is not needed in the proposition above. However, we need to use the structural assumptions to show the strong estimates of $(u,b)$. We always assume that
\begin{equation}\label{eq.axi-ass}
u=u_{r}e_{r}+u_{z}e_{z} \quad \text{and} \quad b=b_{\theta}e_{\theta}
\end{equation} in the remainder parts of this section.
\subsection{Strong a priori estimates}
This subsection is devoted to obtaining some strong a priori estimates of $(u,b)$. Let us start with the maximum principle of  quantity  $\frac{b_\theta}{r}$ which solves the homogenous transport equation.
\begin{prop}\label{b}
Let $\frac{b_{0}}{r}\in L^{2}\cap L^{\infty}$. Assume that $(u, b)$ is   the smooth solution of  system~\eqref{bs} satisfying \eqref{eq.axi-ass}. Then,  for all $p\in [2,\infty]$, there holds
$$\Big\|\frac{b_{\theta}(t)}{r}\Big\|_{L^p}\leq \Big\|\frac{b_{0}}{r}\Big\|_{L^{2}\cap L^{\infty}}\quad\text{for any}\quad t\geq0.
$$
\end{prop}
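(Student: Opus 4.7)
The strategy is to exploit the already-derived homogeneous transport equation \eqref{b-equ} satisfied by $\frac{b_\theta}{r}$, and then run the standard $L^p$ maximum principle for transport by a divergence-free vector field. The plan is as follows.

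\textbf{Step 1: Recall/verify the transport structure.} Equation \eqref{b-equ} has essentially been derived in the introduction, but I would briefly justify it to make the proof self-contained. Since $u$ is axisymmetric without swirl, $u\cdot\nabla = u_r\partial_r+u_z\partial_z$, so
\begin{equation*}
(\partial_t+u\cdot\nabla)\Bigl(\tfrac{b_\theta}{r}\Bigr)=\tfrac{1}{r}(\partial_t+u\cdot\nabla)b_\theta+b_\theta\,u_r\,\partial_r\bigl(\tfrac{1}{r}\bigr)=\tfrac{1}{r}\cdot\tfrac{b_\theta u_r}{r}-\tfrac{b_\theta u_r}{r^2}=0,
\end{equation*}
using the third equation of \eqref{MHDaxi}.

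\textbf{Step 2: $L^p$ estimate for $p\in[2,\infty)$.} Multiply \eqref{b-equ} by $p\bigl|\tfrac{b_\theta}{r}\bigr|^{p-2}\tfrac{b_\theta}{r}$ and integrate over $\mathbb{R}^3$. Because $\mathrm{div}\,u=0$, an integration by parts gives
\begin{equation*}
\int_{\mathbb{R}^3}(u\cdot\nabla)\Bigl(\tfrac{b_\theta}{r}\Bigr)\,\Bigl|\tfrac{b_\theta}{r}\Bigr|^{p-2}\tfrac{b_\theta}{r}\,\mathrm{d}\mathbf{x}=\tfrac{1}{p}\int_{\mathbb{R}^3}u\cdot\nabla\Bigl|\tfrac{b_\theta}{r}\Bigr|^{p}\,\mathrm{d}\mathbf{x}=0,
\end{equation*}
so $\tfrac{\mathrm{d}}{\mathrm{d}t}\bigl\|\tfrac{b_\theta}{r}(t)\bigr\|_{L^p}^{p}=0$, which yields $\bigl\|\tfrac{b_\theta}{r}(t)\bigr\|_{L^p}=\bigl\|\tfrac{b_0}{r}\bigr\|_{L^p}$ for every finite $p\ge 2$.

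\textbf{Step 3: $L^\infty$ estimate.} Pass to the limit $p\to\infty$ in Step 2 (or use the method of characteristics along the smooth flow of the divergence-free field $u$) to obtain $\bigl\|\tfrac{b_\theta}{r}(t)\bigr\|_{L^\infty}\le\bigl\|\tfrac{b_0}{r}\bigr\|_{L^\infty}$.

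\textbf{Step 4: Interpolate to the uniform bound.} For any $p\in[2,\infty]$, the standard $L^p$ interpolation inequality together with Steps 2--3 yields
\begin{equation*}
\Bigl\|\tfrac{b_\theta}{r}(t)\Bigr\|_{L^p}\le\Bigl\|\tfrac{b_\theta}{r}(t)\Bigr\|_{L^2}^{2/p}\Bigl\|\tfrac{b_\theta}{r}(t)\Bigr\|_{L^\infty}^{1-2/p}\le\Bigl\|\tfrac{b_0}{r}\Bigr\|_{L^2}^{2/p}\Bigl\|\tfrac{b_0}{r}\Bigr\|_{L^\infty}^{1-2/p}\le\Bigl\|\tfrac{b_0}{r}\Bigr\|_{L^2\cap L^\infty},
\end{equation*}
which is exactly the claimed bound.

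There is no real analytic obstacle here; the only point that requires some care is ensuring that $\tfrac{b_\theta}{r}$ is a \emph{bona fide} smooth function so that the algebraic manipulation above and the integration by parts are legitimate. This follows from the axisymmetric structure of $b$ (which forces $b_\theta|_{r=0}=0$ for smooth $b$) together with the standing a priori assumption that $(u,b)$ is a smooth solution; it is for this same reason that the hypothesis $\tfrac{b_0}{r}\in L^2\cap L^\infty$ is compatible with the smoothness of $b_0$. Apart from that, the proof is a textbook application of the $L^p$ conservation law for transport by a divergence-free vector field.
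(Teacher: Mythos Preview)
Your proof is correct and follows essentially the same route as the paper: derive the homogeneous transport equation for $\frac{b_\theta}{r}$ from the $b_\theta$-equation in \eqref{MHDaxi}, invoke the $L^p$ maximum principle using $\mathrm{div}\,u=0$, and then interpolate between $L^2$ and $L^\infty$. Your presentation is slightly more detailed (explicit verification of the transport structure and the integration-by-parts step), but the argument is the same as the paper's.
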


\begin{proof}
Recall that
 $$ \partial_{t}b_{\theta}+(u\cdot\nabla)b_{\theta}=\frac{u_{r}}{r}b_{\theta},$$
 we easily find that $\frac{b_{\theta}}{r}$ satisfies the following homogeneous transport equation
$$ \partial_{t}\Big(\frac{b_{\theta}}{r}\Big)+(u\cdot\nabla)\Big( \frac{b_{\theta}}{r}\Big)=0.$$
Moreover, by the divergence-free condition of $u$, we have
$$\Big\|\frac{b_{\theta}}{r}(t)\Big\|_{L^p}\leq \Big\|\frac{b_{0}}{r}\Big\|_{L^p},  \quad\text{for all}\quad p\in [2,\infty].
$$
Therefore, we finish the proof of this proposition by using the interpolation inequality.
\end{proof}
Next, we show the $L^{3,1}$-estimate of the physical quantity  $\frac{\omega_{\theta}}{r}$, which plays an important role in the study of axisymmetric flow without swirl.
\begin{prop}\label{basic}
Assume that $\frac{\omega_{0}}{r}\in L^{p,q}$ with $1<p<\infty,\,1\leq q\leq\infty$ and $\frac{b_{0}}{r}\in L^{2}\cap L^{\infty}$.  Let $(u, b)$ be  the smooth solution of  system \eqref{bs} satisfying \eqref{eq.axi-ass}. Then, for any $t\geq0$,  the following estimate holds
\begin{equation*}
\Big\|\frac{\omega_{\theta}}{r}(t)\Big\|_{L^{p,q}}\leq
C\Big(\Big\|\frac{\omega_{0}}{r}\Big\|_{L^{p,q}}+\sqrt{t}
\Big\|\frac{b_{0}}{r}\Big\|^{2}_{L^{2}\cap L^{\infty}}\Big).
\end{equation*}
In particular, we have
\begin{equation}\label{prior-w-1}
\Big\|\frac{\omega_{\theta}}{r}(t)\Big\|^2_{L^2}+\int_0^t\Big\|\partial_z\Big(\frac{\omega_{\theta}}{r}\Big)(s)\Big\|^2_{L^2}\,\mathrm{d}s\leq C\Big(\Big\|\frac{\omega_{0}}{r}\Big\|^2_{L^2}+t\Big\|\frac{b_{0}}{r}\Big\|^{4}_{L^{2}\cap L^{\infty}}\Big).
\end{equation}
\end{prop}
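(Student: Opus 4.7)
The plan is to rewrite the vorticity equation for $\omega_\theta$ as a single scalar transport-diffusion equation for the ratio $\rho:=\omega_\theta/r$, carry out $L^p$ energy estimates on it for $p\in[2,\infty)$, and then extend to the full Lorentz scale by real interpolation. Starting from \eqref{w} and using that $r$ does not depend on $z$, a direct computation gives $(u\cdot\nabla)\omega_\theta = r\,(u\cdot\nabla)\rho + \tfrac{u_r\omega_\theta}{r}$, so the stretching term on the right of \eqref{w} cancels exactly, $\partial_{zz}^2\omega_\theta=r\,\partial_{zz}^2\rho$, and $-\partial_z b_\theta^2/r$ after division by $r$ equals $-\partial_z\bigl((b_\theta/r)^2\bigr)$. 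Hence
\begin{equation*}
\partial_t\rho + (u\cdot\nabla)\rho - \partial_{zz}^2\rho \;=\; -\partial_z\Bigl(\bigl(b_\theta/r\bigr)^2\Bigr).
\end{equation*}

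For the $L^2$ part \eqref{prior-w-1}, I would test this scalar equation with $\rho$ itself: the transport vanishes by $\mathrm{div}\,u=0$, the diffusion produces $\|\partial_z\rho\|_{L^2}^2$, and one integration by parts on the source yields $\int(b_\theta/r)^2\partial_z\rho\,\mathrm{d}\mathbf{x}$. Cauchy-Schwarz and Young's inequality absorb half the dissipation and leave $\tfrac12\|b_\theta/r\|_{L^4}^4$, which, by an $L^2$-$L^\infty$ interpolation combined with the maximum principle of Proposition~\ref{b}, is majorized by $\tfrac12\|b_0/r\|_{L^2\cap L^\infty}^4$. Integration in $t$ then produces the claimed energy-dissipation inequality.

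For a general exponent $p\in[2,\infty)$, the same calculation with test function $|\rho|^{p-2}\rho$ yields a dissipation of the form $\frac{4(p-1)}{p^2}\|\partial_z|\rho|^{p/2}\|_{L^2}^2$; integration by parts on the source followed by a Hölder-Young split absorbs half of it and leaves $C(p-1)\|b_\theta/r\|_{L^{2p}}^{4}\|\rho\|_{L^p}^{p-2}$. Since $\|b_\theta/r\|_{L^{2p}}\leq\|b_0/r\|_{L^2\cap L^\infty}$ by Proposition~\ref{b}, dividing through by $\|\rho\|_{L^p}^{p-2}$ gives $\tfrac{d}{dt}\|\rho\|_{L^p}^2 \leq C(p-1)\|b_0/r\|_{L^2\cap L^\infty}^4$, whence $\|\rho(t)\|_{L^p}\leq \|\rho_0\|_{L^p}+C_p\sqrt{t}\|b_0/r\|_{L^2\cap L^\infty}^2$.

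To upgrade to the full Lorentz bound, I would freeze the coefficients $u$ and $b_\theta$ and exploit the resulting affine-in-$\rho_0$ structure: writing $\rho(t)=S(t,0)\rho_0+F(t)$, where $S$ is the linear transport-diffusion propagator of $\partial_t+u\cdot\nabla-\partial_{zz}^2$ (a contraction on each $L^{p_1}$, $p_1\in[1,\infty]$, by the same energy argument with zero source) and $F(t)$ is the Duhamel contribution generated by $-\partial_z\bigl((b_\theta/r)^2\bigr)$, the Marcinkiewicz real interpolation theorem applied to $S$ between two indices $p_1<p<p_2$ gives $\|S(t,0)\rho_0\|_{L^{p,q}}\leq C\|\rho_0\|_{L^{p,q}}$, while $F(t)\in L^{p_1}\cap L^{p_2}\hookrightarrow L^{p,q}$ with norm controlled by $C\sqrt{t}\|b_0/r\|_{L^2\cap L^\infty}^2$; the subrange $p\in(1,2)$ is reached by using duality, or the regularized test function $(|\rho|+\epsilon)^{p-2}\rho$, to obtain $L^{p_1}$-bounds with some $p_1\in(1,p)$. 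The main technical obstacle is precisely this Lorentz step: because the underlying MHD system is nonlinear, interpolation applies only after freezing $u$ and $b_\theta$, and one has to verify that the $L^{p_1}$-contraction constants of $S$ remain well-behaved under Marcinkiewicz interpolation.
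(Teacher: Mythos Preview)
Your approach is essentially the paper's: derive the transport--diffusion equation for $\Gamma=\omega_\theta/r$, run $L^p$ energy estimates for $p\ge2$ with the same H\"older--Young split on the source, then pass to Lorentz spaces by real interpolation (the paper simply writes ``by the interpolation theorem'' where you spell out the affine decomposition $S(t,0)\rho_0+F(t)$). Your concern about ``freezing'' $u$ and $b$ is unnecessary in this a priori setting: $(u,b)$ is a fixed smooth solution throughout, so the equation for $\rho$ is genuinely linear in $\rho$ with prescribed coefficients, and Marcinkiewicz interpolation applies directly to the solution map $\rho_0\mapsto\rho(t)-F(t)$.
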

\begin{proof}
We observe  that the quantity $\Gamma:=\frac{\omega_{\theta}}{r}$ solves the following equation
\begin{equation}\label{r}
\partial_{t}\Gamma+(u\cdot\nabla) \Gamma-\partial_{zz}^{2}\Gamma=-\frac{\partial_{z}(b_{\theta}^{2})}{r^{2}}.
\end{equation}
Multiplying \eqref{r} by $|\Gamma|^{p-2}\Gamma$ with $2\leq p<\infty$ and integrating the resulting equation over $\mathbb{R}^3,$ we readily have
\begin{equation}\label{r0}
\begin{split}
\frac1p\frac{\mathrm{d}}{\mathrm{d}t}\|\Gamma(t)\|_{L^p}^p+\frac{4(p-1)}{p^{2}}
\big\|\partial_{z}|\Gamma(t)|^{\frac{p}{2}}\big\|_{L^2}^2
&=-\int_{\mathbb{R}^3} \partial_{z}\Big(\frac{b_{\theta}^{2}}{r^{2}}\Big)|\Gamma|^{p-2}\Gamma \,\mathrm{d}\mathrm{\mathbf{x}}
\\&=(p-1)\int_{\mathbb{R}^3} \frac{b_{\theta}^{2}}{r^{2}}|\Gamma|^{p-2}\partial_{z}\Gamma \,\mathrm{d}\mathrm{\mathbf{x}}.
\end{split}
\end{equation}
For the integral term in the last line of \eqref{r0}, by the H\"{o}lder inequality and the Young inequality, we get
\begin{equation*}
\begin{split}
(p-1)\int_{\mathbb{R}^3}\frac{b_{\theta}^{2}}{r^{2}}|\Gamma|^{p-2}\partial_{z}\Gamma\,\mathrm{d}\mathrm{\mathbf{x}}&\leq (p-1)\Big\|\Big(\frac{b_{\theta}}{r}\Big)^{2}\Big\|_{L^p}\big\||\Gamma|^{\frac{p-2}{2}}\big\|_{L^{\frac{2p}{p-2}}}
\big\||\Gamma|^{\frac{p-2}{2}}\partial_{z}\Gamma \big\|_{L^2}\\&
\leq\frac{2(p-1)}{p}\Big\|\frac{b_{\theta}}{r}\Big\|_{L^{2p}}^{2}\|\Gamma\|_{L^{p}}^{\frac{p-2}{2}}
\big\|\partial_{z}|\Gamma|^{\frac{p}{2}}\big\|_{L^2}\\
&\leq \frac{2(p-1)}{p^{2}}\big\|\partial_{z}|\Gamma|^{\frac{p}{2}}\big\|_{L^2}^{2}+Cp\Big\|\frac{b_{\theta}}{r}\Big\|_{L^{2p}}^{4}\|\Gamma\|_{L^{p}}^{p-2}.
\end{split}
\end{equation*}
Combining this with \eqref{r0} and Proposition \ref{b} gives that for all $p\in[2,\infty)$,
\begin{equation}\label{prior-w}
\begin{split}
\frac1p\frac{\mathrm{d}}{\mathrm{d}t}\|\Gamma(t)\|_{L^p}^p+\frac{2(p-1)}{p^{2}}
\big\|\partial_{z}|\Gamma(t)|^{\frac{p}{2}}\big\|_{L^2}^2
\leq&Cp\Big\|\frac{b_{\theta}}{r}\Big\|_{L^{2p}}^{4}\|\Gamma\|_{L^{p}}^{p-2}\\
\leq&Cp\Big\|\frac{b_{0}}{r}\Big\|_{L^{2p}}^{4}\|\Gamma\|_{L^{p}}^{p-2}\\
\leq&Cp\Big\|\frac{b_{0}}{r}\Big\|_{L^{2}\cap L^\infty}^{4}\|\Gamma\|_{L^{p}}^{p-2}.
\end{split}
\end{equation}
It follows that for all $p\in[2,\infty)$,
\begin{equation*}
\frac{\mathrm{d}}{\mathrm{d}t}\|\Gamma(t)\|_{L^p}^2\leq Cp\Big\|\frac{b_{0}}{r}\Big\|_{L^{2}\cap L^{\infty}}^{4}.
\end{equation*}
After integrating  this inequality with respect to  time $t$, we get
\begin{equation}\label{add-w}
\|\Gamma(t)\|_{L^p}^2\leq \|\Gamma(0)\|_{L^{p}}^2+Cpt\Big\|\frac{b_{0}}{r}\Big\|_{L^{2}\cap L^{\infty}}^{4}.
\end{equation}
Therefore, by the interpolation theorem, we finally have
\begin{equation*}
\|\Gamma(t)\|_{L^{p,q}}\leq
C\Big(\|\Gamma(0)\|_{L^{p,q}}+\sqrt{t}
\Big\|\frac{b_{0}}{r}\Big\|^{2}_{L^{2}\cap L^{\infty}}\Big).
\end{equation*}
Choosing $p=2$ in \eqref{prior-w} and integrating with respect to time $t$ yield estimate \eqref{prior-w-1}, and thus we completes the proof of Proposition \ref{basic}.
\end{proof}

Based on the estimate of $\frac{\omega_\theta}{r}$, we further establish the estimate of $b$ and vorticity $\omega$. Firstly, we focus on the maximum principle of $b_\theta$.

\begin{prop}\label{basic2}
Assume that $\frac{\omega_{0}}{r}\in L^{3,1}$, $b_{0}\in L^{2}\cap L^{\infty}$ and  $\frac{b_{0}}{r}\in L^{2}\cap L^{\infty}$. Let $(u, b)$ be  the smooth solution of  system \eqref{bs} satisfying \eqref{eq.axi-ass}.  Then, for any $t\geq0$, there holds
\begin{equation*}
\norm{b_\theta(t)}_{L^{p}}\leq e^{C\big(t\big\|\frac{\omega_{0}}{r}\big\|_{L^{3,1}}+t^{\frac{3}{2}}
\big\|\frac{b_{0}}{r}\big\|^{2}_{L^{2}\cap L^{\infty}}\big)}
\norm{b_{0}}_{L^{2}\cap L^{\infty}}, \quad\text{for each}\quad p\in [2,\infty].
\end{equation*}
\end{prop}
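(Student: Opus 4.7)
The plan is to combine a direct $L^p$ estimate on the transport equation satisfied by $b_\theta$ with the $L^{3,1}$ control of $\omega_\theta/r$ that was already obtained in Proposition \ref{basic}. Recall from the third line of system \eqref{MHDaxi} that
\begin{equation*}
\partial_t b_\theta + (u\cdot\nabla) b_\theta = \frac{u_r}{r}\, b_\theta,
\end{equation*}
so that $b_\theta$ solves a linear transport equation with a multiplicative source whose amplifying coefficient is $u_r/r$.

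First I would multiply the equation by $|b_\theta|^{p-2} b_\theta$ for $p\in[2,\infty)$ and integrate in space. Since $\mathrm{div}\,u=0$, the transport term integrates to zero, and H\"older's inequality yields
\begin{equation*}
\frac{1}{p}\frac{\mathrm{d}}{\mathrm{d}t}\|b_\theta(t)\|_{L^p}^p \leq \Big\|\frac{u_r}{r}(t)\Big\|_{L^\infty}\|b_\theta(t)\|_{L^p}^p,
\end{equation*}
so that $\tfrac{\mathrm{d}}{\mathrm{d}t}\|b_\theta\|_{L^p} \leq \|u_r/r\|_{L^\infty}\|b_\theta\|_{L^p}$, and Gronwall's inequality gives
\begin{equation*}
\|b_\theta(t)\|_{L^p} \leq \exp\!\Big(\int_0^t \big\|\tfrac{u_r}{r}(\tau)\big\|_{L^\infty}\,\mathrm{d}\tau\Big)\|b_0\|_{L^p}.
\end{equation*}
The same bound holds for $p=\infty$ by the standard characteristic-flow argument on the transport equation, and $\|b_0\|_{L^p}\leq\|b_0\|_{L^2\cap L^\infty}$ follows by interpolation for all $p\in[2,\infty]$.

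The only remaining task is to control $\int_0^t \|u_r/r\|_{L^\infty}\,\mathrm{d}\tau$. Here I invoke the pointwise-type inequality for axisymmetric no-swirl flows already used in the introduction,
\begin{equation*}
\Big\|\frac{u_r}{r}(\tau)\Big\|_{L^\infty} \leq C\Big\|\frac{\omega_\theta}{r}(\tau)\Big\|_{L^{3,1}},
\end{equation*}
which is a direct consequence of the representation formula \eqref{eq.M-Z} (write $u_r/r$ as a Riesz-type convolution of $\omega_\theta/r$ and apply the endpoint case of Lemma \ref{con} with $\alpha=2$, $n=3$). Then Proposition \ref{basic} with $(p,q)=(3,1)$ provides
\begin{equation*}
\Big\|\frac{\omega_\theta}{r}(\tau)\Big\|_{L^{3,1}} \leq C\Big(\Big\|\frac{\omega_0}{r}\Big\|_{L^{3,1}} + \sqrt{\tau}\,\Big\|\frac{b_0}{r}\Big\|^2_{L^2\cap L^\infty}\Big),
\end{equation*}
and integrating in $\tau$ over $[0,t]$ yields
\begin{equation*}
\int_0^t \Big\|\frac{u_r}{r}(\tau)\Big\|_{L^\infty}\,\mathrm{d}\tau \leq C\Big(t\,\Big\|\frac{\omega_0}{r}\Big\|_{L^{3,1}} + t^{3/2}\,\Big\|\frac{b_0}{r}\Big\|^2_{L^2\cap L^\infty}\Big).
\end{equation*}
Inserting this into the exponential bound above gives exactly the stated estimate.

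The main (and essentially only) obstacle is the $L^\infty$ control of $u_r/r$; everything else is a clean energy/Gronwall argument. Fortunately, this bound is already available through the Miao--Zheng identity \eqref{eq.M-Z} and the Lorentz convolution endpoint of Lemma \ref{con}, together with the $L^{3,1}$ a priori bound on $\omega_\theta/r$ established in Proposition \ref{basic}; so the proof reduces to assembling these ingredients.
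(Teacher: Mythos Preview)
Your proof is correct and follows essentially the same route as the paper: an $L^p$ energy estimate on the transport equation for $b_\theta$, Gronwall, and then the bound $\|u_r/r\|_{L^\infty}\leq C\|\omega_\theta/r\|_{L^{3,1}}$ combined with Proposition~\ref{basic}. The only cosmetic differences are that the paper handles $p=\infty$ via the Duhamel-type integral inequality rather than invoking characteristics, and it derives the $L^\infty$ bound on $u_r/r$ from the pointwise estimate $|u_r/r|\le |\cdot|^{-2}\ast|\omega_\theta/r|$ (which is what underlies \eqref{eq.M-Z}) together with Lemma~\ref{con}.
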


\begin{proof}
Multiplying  the third equation of system \eqref{MHDaxi} by $|b_\theta|^{p-2}b_\theta$, $2\leq p<\infty$ and performing integration in space, we get
\begin{equation*}
\begin{split}
\frac1p\frac{\mathrm{d}}{\mathrm{d}t} \|b_\theta(t)\|_{L^{p}}^{p}
=&\int_{\mathbb{R}^3}\frac{u_r}{r} |b_\theta|^p\,\mathrm{d}\mathrm{\mathbf{x}}.
\end{split}
\end{equation*}
For  the right hand side term, we deduce by the H\"older inequality that
\begin{equation*}
\int_{\mathbb{R}^3}\frac{u_r}{r} |b_\theta|^p\mathrm{d}\mathrm{\mathbf{x}}\leq \Big\|\frac{u_{r}}{r}\Big\|_{L^{\infty}}\norm{b_\theta}^{p}_{L^p}.
\end{equation*}
Therefore,
\begin{equation*}
\frac{\mathrm{d}}{\mathrm{d}t}\norm{b_\theta(t)}_{L^{p}}\leq \Big\|\frac{u_{r}}{r}\Big\|_{L^{\infty}}\norm{b_\theta}_{L^p}.
\end{equation*}
The Gronwall lemma yields that
\begin{equation}\label{r6}
\norm{b_\theta(t)}_{L^{p}}\leq e^{\int_0^t \|\frac{u_{r}}{r}(\tau)\|_{L^{\infty}}\,\mathrm{d}\tau}
\norm{b_{\theta}(0)}_{L^{p}}.
\end{equation}
To estimate $\big\|\frac{u_{r}}{r}\big\|_{L^{1}_{t}(L^{\infty})}$, we  use the the pointwise estimate
$\big|\frac{u_{r}}{r}\big|\leq\frac{1}{|\cdot|^{2}}\ast\big|\frac{\omega_{\theta}}{r}\big|$
and Lemma \ref{con} to obtain
\begin{equation}\label{basic0}
\Big\|\frac{u_{r}}{r}\Big\|_{L^{\infty}}\leq \Big\|\frac{\omega_{\theta}}{r}\Big\|_{L^{3,1}}.
\end{equation}
By inserting estimates \eqref{basic0} into \eqref{r6} and using Proposition \ref{basic}, we get
\begin{equation}\label{prior-w-2}
\norm{b_\theta(t)}_{L^{p}}\leq e^{C\big(t\big\|\frac{\omega_{0}}{r}\big\|_{L^{3,1}}+t^{\frac{3}{2}}
\big\|\frac{b_{0}}{r}\big\|^{2}_{L^{2}\cap L^{\infty}}\big)}
\norm{b_{0}}_{L^{p}}.
\end{equation}
For  $p=\infty$, we see that
\begin{equation*}
\begin{split}
\norm{b_\theta(t)}_{L^{\infty}}\leq &\norm{b_\theta(0)}_{L^{\infty}}+\int_0^t\Big\|\Big(\frac{u_r}{r} b_\theta\Big)(s)\Big\|_{L^\infty}\,\mathrm{d}s\\
\leq &\norm{b_0}_{L^{\infty}}+
\int_0^t\Big\|\frac{u_r(s)}{r}\Big\|_{L^\infty}\|b_\theta(s)\|_{L^\infty}\,\mathrm{d}s.
\end{split}
\end{equation*}
By the similar argument as above, we deduce that
\begin{equation*}
\norm{b_\theta(t)}_{L^{\infty}}\leq e^{C\big(t\big\|\frac{\omega_{0}}{r}\big\|_{L^{3,1}}+t^{\frac{3}{2}}
\big\|\frac{b_{0}}{r}\big\|^{2}_{L^{2}\cap L^{\infty}}\big)}
\norm{b_{0}}_{L^{\infty}}.
\end{equation*}
This combined with \eqref{prior-w-2} yields the desired result.
\end{proof}

With the estimates established above in hand, we can bound $\norm{\omega_{\theta}}_{L^{\infty}_{t}(\sqrt{{\mathbb{L}}})}$ by using the smoothing effect of the vertical diffusion.

\begin{prop}\label{pro1}
Assume  that $b_{0}\in {L^{2}\cap L^{\infty}}$, $\frac{b_{0}}{r}\in {L^{2}\cap L^{\infty}}$, $\omega_{0}\in \sqrt{{\mathbb{L}}}$
and $\frac{\omega_{0}}{r}\in L^{3,1}$.
If $(u, b)$ is  the smooth solution of system \eqref{bs} satisfying \eqref{eq.axi-ass},  then for any $t\geq0$, there holds
\begin{equation}\label{eq.voriticity-L}
\norm{\omega_{\theta}(t)}_{\sqrt{{\mathbb{L}}}}\leq C.
\end{equation}
Here the constant C depends only on $t$, $\norm{b_{0}}_{L^{2}\cap L^{\infty}}$, $\big\|\frac{b_{0}}{r}\big\|_{L^{2}\cap L^{\infty}}$, $\norm{\omega_{0}}_{\sqrt{{\mathbb{L}}}}$
and $\big\|\frac{\omega_{0}}{r}\big\|_{L^{3,1}}$.
\end{prop}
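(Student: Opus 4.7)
The plan is to carry out a careful $L^{p}$-energy estimate on the vorticity equation~\eqref{w} uniformly in $p\in[2,\infty)$, so that after dividing by $p$ and taking the supremum we land in $\sqrt{\mathbb{L}}$. Multiplying \eqref{w} by $|\omega_\theta|^{p-2}\omega_\theta$ and integrating by parts (using $\mathrm{div}\,u=0$) gives the identity already written in the introduction:
\begin{equation*}
\frac1p\frac{\mathrm{d}}{\mathrm{d}t}\|\omega_\theta(t)\|_{L^p}^p+\frac{4(p-1)}{p^{2}}\big\|\partial_{z}|\omega_\theta|^{p/2}\big\|_{L^2}^2
=\int_{\RR^3}\frac{u_r}{r}|\omega_\theta|^p\,\mathrm{d}\mathrm{\mathbf{x}}+(p-1)\int_{\RR^3}\frac{b_\theta^2}{r}|\omega_\theta|^{p-2}\partial_z\omega_\theta\,\mathrm{d}\mathrm{\mathbf{x}}.
\end{equation*}

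For the stretching term I use the pointwise bound $\bigl|\tfrac{u_r}{r}\bigr|\leq\tfrac{1}{|\cdot|^2}*\bigl|\tfrac{\omega_\theta}{r}\bigr|$ together with the convolution inequality in Lemma~\ref{con}, which yields $\|u_r/r\|_{L^\infty}\leq C\|\omega_\theta/r\|_{L^{3,1}}$, so that
\[\int_{\RR^3}\frac{u_r}{r}|\omega_\theta|^p\,\mathrm{d}\mathrm{\mathbf{x}}\leq C\Big\|\frac{\omega_\theta}{r}\Big\|_{L^{3,1}}\|\omega_\theta\|_{L^p}^p.\]
For the magnetic source term, I rewrite $|\omega_\theta|^{p-2}\partial_z\omega_\theta=\tfrac{2}{p}|\omega_\theta|^{(p-2)/2}\partial_z|\omega_\theta|^{p/2}$ and use Hölder followed by Young's inequality with a splitting $\tfrac{b_\theta^2}{r}=\tfrac{b_\theta}{r}\cdot b_\theta$ to absorb the vertical derivative into the dissipation, producing exactly
\[(p-1)\int_{\RR^3}\frac{b_\theta^{2}}{r}|\omega_\theta|^{p-2}\partial_{z}\omega_\theta\,\mathrm{d}\mathrm{\mathbf{x}}\leq\frac{2(p-1)}{p^{2}}\big\|\partial_{z}|\omega_\theta|^{p/2}\big\|_{L^2}^{2}+Cp\Big(\Big\|\frac{b_{\theta}}{r}\Big\|_{L^{2p}}^{4}+\|b_{\theta}\|_{L^{2p}}^{4}\Big)\|\omega_\theta\|_{L^p}^{p-2}.\]

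Combining these two bounds and moving the dissipative remainder to the left-hand side, I obtain
\[\frac{\mathrm{d}}{\mathrm{d}t}\|\omega_\theta(t)\|_{L^p}^{2}\leq C\Big\|\frac{\omega_\theta}{r}\Big\|_{L^{3,1}}\|\omega_\theta\|_{L^p}^{2}+Cp\Big(\Big\|\frac{b_\theta}{r}\Big\|_{L^{2p}}^{4}+\|b_\theta\|_{L^{2p}}^{4}\Big).\]
The crucial structural point is that the $p$-dependent factor in the forcing is purely linear in $p$, so dividing by $p$ will put us in $\sqrt{\mathbb{L}}$. I now plug in the uniform-in-$p$ bounds obtained earlier: Proposition~\ref{b} gives $\|b_\theta/r\|_{L^{2p}}\leq \|b_0/r\|_{L^{2}\cap L^\infty}$, Proposition~\ref{basic2} gives $\|b_\theta(\tau)\|_{L^{2p}}\leq e^{C(\tau,\,\|\omega_0/r\|_{L^{3,1}},\,\|b_0/r\|_{L^2\cap L^\infty})}\|b_0\|_{L^2\cap L^\infty}$ (with a bound independent of $p$), and Proposition~\ref{basic} ensures $\int_0^t\|\omega_\theta/r\|_{L^{3,1}}\,\mathrm{d}\tau<\infty$.

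The last step is to apply Gronwall's inequality, divide by $p$, and take the supremum over $p\in[2,\infty)$. The Gronwall factor $\exp\bigl(C\int_0^t\|\omega_\theta/r\|_{L^{3,1}}\,\mathrm{d}\tau\bigr)$ is independent of $p$; the initial data contribution becomes $\tfrac{1}{p}\|\omega_0\|_{L^p}^2\leq\|\omega_0\|_{\sqrt{\mathbb{L}}}^2$ directly by definition; and the time integral of the magnetic forcing, once divided by $p$, loses its $p$-factor and is controlled by a quantity depending only on $t$, $\|b_0/r\|_{L^2\cap L^\infty}$ and $\|b_0\|_{L^2\cap L^\infty}$. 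Taking the supremum then yields \eqref{eq.voriticity-L}. The main obstacle I anticipate is bookkeeping rather than conceptual: I must be careful that the Young-type splitting used for the magnetic term produces exactly the $p$-linear constant $Cp$ (and not a worse power of $p$), since any extra $\sqrt{p}$ would destroy the $\sqrt{\mathbb{L}}$ conclusion after dividing by $p$.
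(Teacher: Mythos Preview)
Your proposal is correct and follows essentially the same route as the paper: the same $L^p$-energy identity on \eqref{w}, the same H\"older/Young absorption of the magnetic term producing the linear-in-$p$ forcing $Cp\bigl(\|b_\theta/r\|_{L^{2p}}^4+\|b_\theta\|_{L^{2p}}^4\bigr)$, the same use of $\|u_r/r\|_{L^\infty}\le C\|\omega_\theta/r\|_{L^{3,1}}$ for the stretching term, and the same Gronwall-plus-division-by-$p$ conclusion. Your bookkeeping concern about the $p$-power is exactly the right point to watch, and your splitting handles it correctly.
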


\begin{proof}
Multiplying the vorticity equation \eqref{w} by $|\omega_\theta|^{p-2}\omega_\theta$, $2\leq p<\infty$ and integrating the resulting equation over $\mathbb{R}^3$, we obtain
\begin{equation}\label{r2}
\frac1p\frac{\mathrm{d}}{\mathrm{d}t}\|\omega_\theta(t)\|_{L^p}^p+\frac{4(p-1)}{p^{2}}
\big\|\partial_{z}|\omega_\theta(t)|^{\frac{p}{2}}\big\|_{L^2}^2
=\int_{\mathbb{R}^3}\frac{u_{r}}{r}|\omega_\theta|^{p} \,\mathrm{d}\mathrm{\mathbf{x}}-\int_{\mathbb{R}^3}\frac{\partial_{z}(b_{\theta}^{2})}{r}|\omega_\theta|^{p-2} \omega_\theta\,\mathrm{d}\mathrm{\mathbf{x}}.
\end{equation}
For the first term on the right hand side of the above equality, we deduce by the H\"older inequality that
\begin{equation}\label{r3}
\int_{\mathbb{R}^3}\frac{u_r}{r} |\omega_\theta|^p\,\mathrm{d}\mathrm{\mathbf{x}}\leq \Big\|\frac{u_{r}}{r}\Big\|_{L^{\infty}}\norm{\omega_\theta}^{p}_{L^p}.
\end{equation}
As for the second term, integrating by parts leads to
\begin{align}\label{r7}
-\int_{\mathbb{R}^3}\frac{\partial_{z}(b_{\theta}^{2})}{r}|\omega_\theta|^{p-2} \omega_\theta\,\mathrm{d}\mathrm{\mathbf{x}}=&(p-1)\int_{\mathbb{R}^3}\frac{b_{\theta}^{2}}{r} |\omega_\theta|^{p-2}\partial_{z}\omega_\theta\,\mathrm{d}\mathrm{\mathbf{x}}.
\end{align}
Moreover, by the H\"older inequality and the Young inequality, we get
\begin{align*}
&(p-1)\int_{\mathbb{R}^3}\frac{b_{\theta}^{2}}{r} |\omega_\theta|^{p-2}\partial_{z}\omega_\theta\,\mathrm{d}\mathrm{\mathbf{x}}\\
\leq &(p-1)\Big\|\frac{b_{\theta}^{2}}{r} \Big\|_{L^p}\big\||\omega_\theta|^{\frac {p-2}{2}}\big\|_{L^\frac{2p}{p-2}}\big\||\omega_\theta|^{\frac {p-2}{2}}\partial_{z}\omega_\theta\big\|_{L^2}\\ \leq&\frac{2(p-1)}{p}\Big\|\frac{b_{\theta}^{2}}{r} \Big\|_{L^p}\|\omega_\theta\|_{L^p}^{\frac{p-2}{2}}
\big\|\partial_{z}|\omega_\theta\big|^{\frac {p}{2}}\|_{L^2}
\\ \leq&\frac{2(p-1)}{p^{2}}\big\|\partial_{z}|\omega_\theta|^{\frac {p}{2}}\big\|_{L^2}^{2}+Cp
\Big\|\frac{b_{\theta}^{2}}{r} \Big\|_{L^p}^{2}\|\omega_\theta\|_{L^p}^{p-2}.
\end{align*}
This together with \eqref{r2}, \eqref{r3} and \eqref{r7} yields
\begin{equation*}
\frac{\mathrm{d}}{\mathrm{d}t}\|\omega_\theta(t)\|_{L^p}^2\leq
C\Big\|\frac{u_{r}}{r}\Big\|_{L^{\infty}}\norm{\omega_\theta}^{2}_{L^p}+Cp\Big\|\frac{b_{\theta}^{2}}{r} \Big\|_{L^p}^{2}.
\end{equation*}
Hence, the Gronwall lemma ensures that
\begin{equation}\label{r4}
\|\omega_\theta(t)\|_{L^p}^2\leq
e^{C\int_0^t\|\frac{u_{r}}{r}(\tau)\|_{L^{\infty}}\,\mathrm{d}\tau}
\Big(\norm{\omega_{\theta}(0)}^{2}_{L^{p}}+
Cp\int_0^t\Big\|\frac{b_{\theta}^{2}}{r}(\tau)\Big\|^{2}_{L^{p}}\,\mathrm{d}\tau\Big).
\end{equation}
According to Proposition \ref{b} and Proposition \ref{basic2}, we obtain  that
\begin{equation*}
\begin{split}
\int_0^t\Big\|\frac{b_{\theta}^{2}(\tau)}{r}\Big\|^{2}_{L^{p}}\,\mathrm{d}\tau&\leq
\int_0^t\|b_{\theta}(\tau)\|^{4}_{L^{2p}}\,\mathrm{d}\tau+
\int_0^t\Big\|\frac{b_{\theta}(\tau)}{r}\Big\|^{4}_{L^{2p}}\,\mathrm{d}\tau\\&\leq
Cte^{C\Big(t\big\|\frac{\omega_{0}}{r}\big\|_{L^{3,1}}+t^{\frac{3}{2}}
\big\|\frac{b_{0}}{r}\big\|^{2}_{L^{2}\cap L^{\infty}}\Big)}
\norm{b_{0}}_{L^{2}\cap L^{\infty}}^{4}+Ct\Big\|\frac{b_{0}}{r}\Big\|^{4}_{L^2\cap L^\infty}\\&\leq C.
\end{split}
\end{equation*}
By virtue of \eqref{basic0} and Proposition \ref{basic}, we know that
$$\int_0^t\Big\|\frac{u_{r}}{r}(\tau)\Big\|_{L^{\infty}}\,\mathrm{d}\tau\leq C,$$ where the constant $C$ doesn't depend on $p$.
Inserting all these estimates into  \eqref{r4}, we get for each $2\leq p<\infty$,
$$\|\omega_\theta(t)\|_{L^p}\leq C(\|\omega_0\|_{L^p}+\sqrt{p}),$$ which implies the desired estimate \eqref{eq.voriticity-L}.
\end{proof}

Proposition \ref{pro1}  together with the well-known fact that $\|\nabla u\|_{L^p}\leq C\frac{p^2}{p-1}\|\omega\|_{L^p}$ for $p\in(1,\infty)$ yields that
$\displaystyle\sup_{2\leq p<\infty}\frac{\|\nabla u(t)\|_{L^p}}{p\sqrt{p}}$ is locally bounded in time. But, the growth rate $p\sqrt{p}$ goes far beyond  the Osgood type theorem. This induces us  to improve the regularity of $\nabla u$ by using the vertical smooth effect for the anisotropy system, which is the heart in our proof.

\begin{prop}\label{prop-fine}
Assume that $b_{0}\in {L^{2}\cap L^{\infty}}$, $\frac{b_{0}}{r}\in {L^{2}\cap L^{\infty}}$, $\omega_{0}\in \sqrt{{\mathbb{L}}}$
and $\frac{\omega_{0}}{r}\in L^{3,1}$.
Let $(u, b)$ be  the smooth  solution of  system \eqref{bs} satisfying \eqref{eq.axi-ass}. Then, for any $t\geq0$, the following  estimate holds
\begin{equation}\label{eq-h-e}
\sup_{2\leq p<\infty}\int_0^t\sum_{q\geq0}2^{qs}\frac{\|u_{q}(\tau)\|_{L^p}}{p^{\frac32}}\,\mathrm{d}\tau\leq C,\quad \text{for}\quad s\in(1,2).
\end{equation}
In particular, we have
\begin{equation}\label{Loglip}
\sup_{2\leq p<\infty}\int_0^t\frac{\norm{\partial_{z} u(\tau)}_{L^p}^{2}}{p^{\frac34}}\,\mathrm{d}\tau\leq C.
\end{equation}
Here  constants $C>0$ depend on  $t$, $\norm{b_{0}}_{L^{2}\cap L^{\infty}}$, $\big\|\frac{b_{0}}{r}\big\|_{L^{2}\cap L^{\infty}}$, $\norm{\omega_{0}}_{\sqrt{{\mathbb{L}}}}$
and $\big\|\frac{\omega_{0}}{r}\big\|_{L^{3,1}}$.
\end{prop}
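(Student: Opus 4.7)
The plan is to carry out a Littlewood--Paley analysis of the momentum equation and exploit the vertical smoothing via a frequency-localized $L^p$ energy estimate, closing the bound against the a priori inputs $\|\omega\|_{\sqrt{{\mathbb{L}}}}\leq C$ (Proposition~\ref{pro1}), $\|u_r/r\|_{L^\infty}\leq\|\omega_\theta/r\|_{L^{3,1}}\leq C$ (Proposition~\ref{basic}), and the magnetic bounds of Propositions~\ref{b}--\ref{basic2}. The architecture of the argument parallels that of Proposition~\ref{losingestimate--1}, but now applied to $u$ itself rather than to a passive quantity.

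I would first apply the isotropic block $\Delta_q$ (for $q\geq 0$) to the momentum equation, eliminate the pressure via the Leray projector, and test against $|\Delta_q u|^{p-2}\Delta_q u$. The divergence-free condition converts the convection into a commutator $R_q(u,u)=[\Delta_q,u\cdot\nabla]u$, whose $L^p$-norm is bounded via Lemma~\ref{commutator-est} following the chain \eqref{eq.DE}--\eqref{eq.CEE}: it splits into a low-frequency piece driven by $\|S_{q+5}\partial_z u\|_{L^\infty}+\|u_r/r\|_{L^\infty}$, and a losing piece $2^{-qs}\sqrt{q+2}\,\|\omega\|_{\sqrt{{\mathbb{L}}}}\,\|u\|_{B^s_{p,\infty}}$. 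The magnetic source $\Delta_q(b\cdot\nabla b)$ is controlled using $\|b\|_{L^{2p}}$ and $\|b/r\|_{L^{2p}}$ from Propositions~\ref{b} and~\ref{basic2}. After absorbing the troublesome piece via Young's inequality exactly as in \eqref{l-6}, one obtains the dyadic inequality
\begin{equation*}
\frac{1}{p}\frac{\mathrm{d}}{\mathrm{d}t}\|\Delta_q u\|_{L^p}^p+\frac{c(p-1)}{p^2}\bigl\|\partial_z|\Delta_q u|^{p/2}\bigr\|_{L^2}^2\lesssim(\text{commutator}+\text{magnetic terms}).
\end{equation*}
I would then multiply by $2^{2qs}$ with $s\in(1,2)$, sum in $q$, and run the Gronwall argument of Proposition~\ref{losingestimate-1} with the weights $V(t)=1+\|\omega(t)\|_{\sqrt{{\mathbb{L}}}}$ and $U(t)$ defined there; the requirement $s+\epsilon<2$ on the losing parameter is precisely what makes the argument close. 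This produces a pointwise-in-time bound of order $p^{1/2}$ for $\|u(t)\|_{B^s_{p,\infty}}$ together with a time-integrated dissipation bound $\int_0^t\sum_{q\geq 0}2^{2qs}\|\Delta_q u(\tau)\|_{L^p}^2\,\mathrm{d}\tau\lesssim p^{3}$. A Cauchy--Schwarz in $q$ against the summable weight $2^{-q\epsilon}$ upgrades the resulting $\ell^2_q$-Besov bound to the $\ell^1_q$-Besov form required on the left of \eqref{eq-h-e}, producing the announced $p^{3/2}$-growth.

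The specialization to \eqref{Loglip} is obtained by further decomposing $\Delta_q u=\sum_k\Delta_q\Delta_k^v u$ via the anisotropic blocks of Definition~\ref{def-anisotropic}. For $k\lesssim q$ I bound $\|\partial_z\Delta_q\Delta_k^v u\|_{L^p}$ using the vertical Bernstein inequality by $2^k\|\Delta_q\Delta_k^v u\|_{L^p}$, while for $k\gtrsim q$ I feed the vertical block into the dissipation estimate above via the vertical heat semigroup of Lemma~\ref{heat}, which smooths at rate $2^{2k}$ rather than $2^{2q}$. Summing both contributions and integrating in time, the combined bound improves to $\int_0^t\|\partial_z u(\tau)\|_{L^p}^2\,\mathrm{d}\tau\lesssim p^{3/4}$, yielding \eqref{Loglip}.

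The main obstacle is precisely this anisotropy of the dissipation: an isotropic dyadic block $\Delta_q u$ carries vertical frequencies ranging over all scales from $0$ to $2^q$, so the dissipation $\|\partial_z|\Delta_q u|^{p/2}\|_{L^2}^2$ is not comparable to the full gain $2^{2q}\|\Delta_q u\|_{L^p}^p$ one would get from an isotropic Laplacian, and a purely isotropic estimate loses a substantial power of $p$. Reclaiming this power forces the anisotropic refinement sketched above in conjunction with the losing machinery of Section~\ref{Sec-2}, and is what pins down the specific exponents $3/2$ and $3/4$ in \eqref{eq-h-e} and \eqref{Loglip} together with the restriction $s\in(1,2)$.
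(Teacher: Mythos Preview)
Your approach contains a genuine circularity. You propose to run the Gronwall argument of Proposition~\ref{losingestimate-1} with the weight $U(t)$ defined there, but $U(t)$ is built from precisely the quantity $\sup_{2\leq q<\infty}\int_0^t\|\partial_z u(\tau)\|_{L^q}^2 q^{-3/4}\,\mathrm{d}\tau$ that you are trying to bound in \eqref{Loglip}. In the paper's logical order Proposition~\ref{prop-fine} is established first, by a self-contained argument, and only afterwards is the losing machinery of Propositions~\ref{losingestimate-1}--\ref{losingestimate--1} invoked (in Proposition~\ref{prop-losingestimate}) with \eqref{Loglip} as an \emph{input}. Your proposal inverts this dependency: the step ``absorbing the troublesome piece via Young's inequality exactly as in \eqref{l-6}'' produces the coefficient $\|\partial_z u\|_{L^q}^2 q^{-3/4}$ in front of $\|\Delta_q u\|_{L^p}^2$, and at this point you have no independent control of its time integral, so the Gronwall cannot close.

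The paper avoids the problem by two choices you did not make. First, it localizes with \emph{vertical} blocks $u_q=\Delta_q^v u$ rather than isotropic ones; on $\Delta_q^v$ the vertical heat semigroup of Lemma~\ref{heat} gives genuine decay $e^{-ct2^{2q}}$, so a Duhamel formula yields a clean factor $2^{-2q}$ in $L^1_t(L^p)$ with no energy method and no absorption argument. Second, it rewrites the advection as $\mathcal{P}(u\cdot\nabla u)=\mathcal{P}(\omega\times u)$, which eliminates the commutator altogether: the nonlinear input is simply $\|\omega\times u\|_{L^p}\leq\|\omega\|_{L^p}\|u\|_{L^\infty}\lesssim\sqrt{p}$, and together with one factor of $p$ from the Leray projector on $L^p$ this produces the $p^{3/2}$ in \eqref{eq-h-e} directly. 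Estimate \eqref{Loglip} then follows from the vertical interpolation $\|\partial_z u\|_{L^p}\leq C\|\Lambda_v^{3/4}u\|_{L^p}^{3/4}\|\Lambda_v^{7/4}u\|_{L^p}^{1/4}$ combined with Lemma~\ref{prop-fra} and \eqref{eq-h-e} at $s=\tfrac74$; no double decomposition $\Delta_q\Delta_k^v$ is needed.
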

\begin{proof}
Applying  operator $\Delta^{v}_{q}\mathcal{P}$ to the first equation of \eqref{bs} and using Duhamel formula,  we get
\begin{equation*}
\begin{split}
u_{q}(t,\mathrm{\mathbf{x}})=&e^{t\Delta_{v}}u_{q}(0)-\int_0^te^{(t-\tau)\Delta_{v}}\Delta^{v}_q\mathcal{P}\big((u\cdot\nabla) u\big)(\tau,\mathrm{\mathbf{x}})\,\mathrm{d}\tau+\int_0^te^{(t-\tau)\Delta_{v}}\Delta^{v}_q\mathcal{P}
\big((b\cdot\nabla) b\big)(\tau,\mathrm{\mathbf{x}})\,\mathrm{d}\tau,
\end{split}
\end{equation*}
where $u_{q}=\Delta^{v}_q u$ and $\mathcal{P}$ is the Leray projection on divergence free vector fields.

Notice that
\begin{equation*}
(u\cdot\nabla) u=\omega\times u+\frac12\nabla|u|^2
\end{equation*}
whence,
$$\mathcal{P}\big((u\cdot\nabla) u\big)=\mathcal{P}(\omega\times u).
$$
According to Lemma \ref{heat}, for $q\geq0$, we have
$$
\|e^{t\Delta_{v}}\Delta^{v}_q f\|_{L^p(\RR^3)}=\norm{\|e^{t\Delta_{v}}\Delta^{v}_q f\|_{L^p(\RR_{v})}}_{L^{p}(\RR_{h}^2)}\le Ce^{-ct2^{2q}}\|\Delta^{v}_q  f\|_{L^p(\RR^3)}.
$$
Therefore,  for $q\geq 0$,
\begin{equation*}
\begin{split}
\|u_{q}\|_{L^1_t(L^p)}\leq& C 2^{-2q}\|u_{q}(0)\|_{L^p}+Cp2^{-2q}\int_0^t\big\|\Delta^{v}_q (\omega\times u)(\tau)\big\|_{L^p}\,\mathrm{d}\tau\\
&+Cp2^{-2q}\int_0^t\big\|\Delta^{v}_{q}\big((b\cdot\nabla) b\big)(\tau)\big\|_{ L^p}\,\mathrm{d}\tau.
\end{split}
\end{equation*}
Multiplying the above inequality by $2^{qs}$ and summing over $q\geq0$, we readily obtain that
\begin{equation*}
\begin{split}
 \sum_{q\geq0}2^{qs}\|u_{q}\|_{L^1_t(L^p)}
\leq & C\sum_{q\geq0}2^{q(s-2)}\|u_{q}(0)\|_{L^p}+C
p\int_0^t\sum_{q\geq0}2^{q(s-2)}\big\|\Delta^{v}_q (\omega\times u)(\tau)\big\|_{L^p}\,\mathrm{d}\tau\\
&+C
p\int_0^t\sum_{q\geq0}2^{q(s-2)}\big\|\Delta^{v}_q \big((b\cdot\nabla) b\big)(\tau)\big\|_{L^p}\,\mathrm{d}\tau\\
:=&I_1+I_2+I_3.
\end{split}
\end{equation*}
First of all,  the H\"older inequality and the Sobelev inequality allow us to conclude that for $s<2$,
\begin{equation}\label{eq.I1}
\begin{split}
I_1\leq C\|u_0\|_{L^p}\leq& C(\|u_0\|_{L^2}+\|u_0\|_{L^\infty})\\
\leq& C(\|u_0\|_{L^2}+\|\omega_0\|_{\sqrt{\mathbb{L}}}).
\end{split}
\end{equation}
To deal with   $I_2$, arguing  as for proving  \eqref{eq.I1}, we get that  for $s<2$,
\begin{equation*}
\begin{split}
\sum_{q\geq0}2^{q(s-2)}\big\|\Delta^{v}_q (\omega\times u)\big\|_{L^p}\leq&C\|\omega\times u\|_{L^p}\\
\leq&C\|\omega\|_{L^p}\|u\|_{L^\infty}\\
\leq&C\|\omega\|_{L^p}\big(\|u\|_{L^2}+\|\omega\|_{L^4}\big).
\end{split}
\end{equation*}
In consequence,
\begin{equation}\label{eq.I2}
\begin{split}
I_{2}\leq &Cp\big(\|u_0\|_{L^2}+\|\omega\|_{L^\infty_t(\sqrt{\mathbb{L}})}\big)
\int_0^t\norm{\omega(\tau)}_{L^{p}}\,\mathrm{d}\tau\\ \leq&Ctp^{\frac{3}{2}}\big(\|u_0\|_{L^2}+\|\omega\|_{L^\infty_t(\sqrt{\mathbb{L}})}\big)
\|\omega\|_{L^\infty_t(\sqrt{\mathbb{L}})}.
\end{split}
\end{equation}
Finally,  we deal with the third  parentheses of $I_{3}$.  Since $(b\cdot\nabla) b=\frac{b_{\theta}^{2}}{r}e_r$ in the cylindrical coordinates,  we obtain that for $s<2$,
\begin{equation}\label{eq.I3}
\begin{split}
I_3&\leq Cp\int_0^t\Big\| \frac{b_{\theta}^{2}}{r}(\tau)\Big\|_{L^p}\,\mathrm{d}\tau\\
&\leq Cp\int_0^t\Big(\|b_{\theta}(\tau)\|^{2}_{L^{2p}}+\Big\| \frac{b_{\theta}}{r}(\tau)\Big\|^{2}_{L^{2p}}\Big)\,\mathrm{d}\tau
\end{split}
\end{equation}
Putting together \eqref{eq.I1}, \eqref{eq.I2} and \eqref{eq.I3} yields that for $s<2$,
\begin{align*}
\sum_{q\geq0}2^{qs}\|u_{q}\|_{L^1_t(L^p)}
&\leq  C(\|u_0\|_{L^2}+\|\omega_0\|_{\sqrt{\mathbb{L}}})+Ctp^{\frac{3}{2}}\big(\|u_0\|_{L^2}+
\|\omega\|_{L^\infty_t(\sqrt{\mathbb{L}})}\big)
\|\omega\|_{L^\infty_t(\sqrt{\mathbb{L}})}\\
&+Cp\int_0^t\Big(\|b_{\theta}(\tau)\|^{2}_{L^{2p}}+\Big\| \frac{b_{\theta}}{r}(\tau)\Big\|^{2}_{L^{2p}}\Big)\,\mathrm{d}\tau.
\end{align*}
Therefore, multiplying this inequality by $p^{-\frac32}$ and using  the previous estimates in Proposition \ref{b}, Proposition \ref{basic2} and Proposition \ref{pro1}, we have
\begin{equation*}
\begin{split}
\displaystyle\sup_{2\leq p<\infty}\int_0^t\sum_{q\geq0}2^{qs}\frac{\|u_{q}(\tau)\|_{L^p}}{p^{\frac32}}\,\mathrm{d}\tau
\leq & C,
\end{split}
\end{equation*}
where the positive constant $C$ depends only on $t$ and the initial data.

Now we are ready to prove \eqref{Loglip}.
With the help of the Sobolev inequality, for each $2\leq p<\infty$,  we have
\begin{equation}\label{vertical-estimate}
\norm{\partial_{z}u}_{L^p}\leq C\big\|\Lambda_{v}^{\frac34}u\big\|_{L^p}^{\frac34}
\big\|\Lambda_{v}^{\frac74}u\big\|_{L^p}^{\frac14}.
\end{equation}
On the one hand, by using the  interpolation inequality and Lemma \ref{prop-fra}, we get
\begin{equation*}
\big\|\Lambda_{v}^{\frac34}u\big\|_{L^p}\leq
C\big(\big\|\Lambda^{\frac{3}{4}}_vu\big\|_{L^2}+ \|\Lambda^{\frac{3}{4}}_vu\big\|_{L^\infty}\big)\leq
C\big(\|u\|_{L^2}+\|\omega\|_{\sqrt{\mathbb{L}}}\big).
\end{equation*}
On the other hand, by the Bernstein inequality, we have
\begin{equation*}
\big\|\Lambda_{v}^{\frac74}u\big\|_{L^p}\leq
\sum_{q\geq-1} \|\Delta_q^v\Lambda_{v}^{\frac74}u\|_{L^p}\leq
C\big(\|u\|_{L^2}+\sum_{q\geq0} 2^{\frac74q}{\|\Delta_q^vu\|_{L^p}}\big).
\end{equation*}
Combining these estimates with \eqref{vertical-estimate} leads to
$$\norm{\partial_{z}u}_{L^p}^{4}\leq C\big(\|u\|_{L^2}+\|\omega\|_{\sqrt{\mathbb{L}}}\big)^{3}\big(\|u\|_{L^2}+\sum_{q\geq0} 2^{\frac74q}{\|\Delta_q^vu\|_{L^p}}\big).$$
Hence, by using \eqref{eq-h-e} with $s=\frac{7}{4}$, we  deduce that
\begin{equation}
\int_{0}^{t}\frac{\norm{\partial_{z}u(\tau)}_{L^p}^{4}}{p^{\frac32}}\,\mathrm{d}\tau
\leq C\Big(1+\int_{0}^{t}\sum_{q\geq0}
2^{\frac74q}\frac{{\|\Delta_q^vu(\tau)\|_{L^p}}}{p^{\frac32}}\,\mathrm{d}\tau\Big)
<\infty.
\end{equation}
So finally, this combined with H\"{o}lder's inequality yields estimate \eqref{Loglip}.
\end{proof}

\subsection{Lipschitz bound of the velocity}
This subsection is devoted to showing the Lipschitz estimate for the velocity field via  losing estimates for the anisotropy system.
\begin{prop}\label{prop-losingestimate}
Let $(u_0,b_0)\in H^s\times H^s$ with $s>\frac52$. Assume that $(u_{0},b_{0})$ satisfies the conditions stated in Theorem \ref{the}. Let $(u, b)$ be the smooth  solution of  system \eqref{bs} satisfying \eqref{eq.axi-ass}.  Then, for any $T>0$, we have
\begin{equation}\label{eq.Lip}
\int_{0}^{T}\|\nabla u(t)\|_{L^{\infty}} \,\mathrm{d}t\leq C,
\end{equation}
where the positive constant $C$
depends on $T$ and the initial data.
\end{prop}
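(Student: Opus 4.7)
The plan is to apply the losing a priori estimates of Propositions~\ref{losingestimate-1} and \ref{losingestimate--1} successively to the axisymmetric unknowns $b_\theta/r$, $b_\theta$ and $\omega_\theta$, and then to convert the resulting small-loss Besov regularity of $\omega_\theta$ into an $L^\infty_T(L^\infty)$ bound on $\nabla u$ via a suitable Besov embedding and the axisymmetric Biot--Savart law.

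Before invoking the losing estimates I would verify that the drift $u$ satisfies the hypotheses \eqref{l-5}--\eqref{l-4} on $[0,T]$: the bound on $\int_0^T\|\omega\|_{\sqrt{\mathbb{L}}}\,\mathrm{d}t$ follows from Proposition~\ref{pro1}, the space-time control $\sup_q\int_0^T\|\partial_z u\|_{L^q}^2/q^{3/4}\,\mathrm{d}t<\infty$ is exactly \eqref{Loglip} of Proposition~\ref{prop-fine}, and the pointwise estimate $\|u_r/r\|_{L^\infty}\leq C\|\omega_\theta/r\|_{L^{3,1}}$ from \eqref{basic0} together with Proposition~\ref{basic} handles $\int_0^T\|u_r/r\|_{L^\infty}^2\,\mathrm{d}t$. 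Thus both $U(T)$ and $\int_0^T V(t)\,\mathrm{d}t$ appearing in Propositions~\ref{losingestimate-1}--\ref{losingestimate--1} are finite.

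Fix $p$ large and $\sigma\in(0,1)$ with $\sigma>3/p$, and choose $\epsilon>0$ small enough that $\sigma-\epsilon>3/p$. Proposition~\ref{losingestimate-1} applied to the pure transport equation \eqref{b-equ} for $b_\theta/r$ gives $b_\theta/r\in L^\infty_T(B^{\sigma-\epsilon}_{p,\infty})$. A second application of Proposition~\ref{losingestimate-1} to the $b_\theta$--equation, whose source $(u_r/r)b_\theta$ is handled by Bony's decomposition using the uniform $L^\infty$ bounds on $u_r/r$ and $b_\theta$ furnished by Propositions~\ref{b} and~\ref{basic2}, yields $b_\theta\in L^\infty_T(B^{\sigma-\epsilon}_{p,\infty})$. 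Then Proposition~\ref{losingestimate--1} is applied to the vorticity equation~\eqref{w}, which has exactly the required anisotropic transport-diffusion form with $f=(u_r/r)\omega_\theta$ and $g=-b_\theta^2/r$ (using that $r$ is independent of $z$). The products $f$ and $g$ are bounded in $L^2_T(B^\sigma_{p,\infty})$ via paraproduct estimates, combining the Besov bounds on $b_\theta$ and $b_\theta/r$ just obtained with the $\sqrt{\mathbb{L}}$ control on $\omega_\theta$ from Proposition~\ref{pro1}; this produces $\omega_\theta\in L^\infty_T(B^{\sigma-\epsilon}_{p,\infty})$.

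Finally, the Besov embedding $B^{\sigma-\epsilon}_{p,\infty}\hookrightarrow C^{\sigma-\epsilon-3/p}$ places $\omega_\theta$ in $L^\infty_T(C^\alpha)$ with $\alpha:=\sigma-\epsilon-3/p>0$, and the axisymmetric Biot--Savart law together with the boundedness of Calder\'on--Zygmund operators on H\"older classes gives $\nabla u\in L^\infty_T(L^\infty)$, whence \eqref{eq.Lip} follows immediately. The main technical difficulty lies in the paraproduct analysis of the vorticity source term: because $u_r\omega_\theta/r$ involves $\omega_\theta$ itself, the ordering of the three losing-estimate applications is crucial so that the source regularity needed at the vorticity step is already supplied by the previous steps and by the controls of Section~\ref{Sec-3}; at the same time one must keep $\sigma$ strictly inside the admissible window $(-1,1)$ of Propositions~\ref{losingestimate-1}--\ref{losingestimate--1} while still ensuring a strictly positive H\"older exponent $\alpha$ after the arbitrarily small loss $\epsilon$.
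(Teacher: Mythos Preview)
Your overall plan matches the paper's: verify hypotheses \eqref{l-5}--\eqref{l-4}, apply Proposition~\ref{losingestimate-1} to $b_\theta/r$ and $b_\theta$, then Proposition~\ref{losingestimate--1} to $\omega_\theta$, and finish with a Bernstein/embedding argument. However, there is a genuine gap in how you propose to bound the source terms.

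For the $b_\theta$--equation you say the source $(u_r/r)b_\theta$ is ``handled \ldots\ using the uniform $L^\infty$ bounds on $u_r/r$ and $b_\theta$''; for the vorticity equation you say $f=(u_r/r)\omega_\theta$ is bounded in $L^2_T(B^\sigma_{p,\infty})$ using ``the $\sqrt{\mathbb L}$ control on $\omega_\theta$''. Neither works: $L^\infty$ (resp.\ $\sqrt{\mathbb L}$) control is zero-order and cannot by itself place the product in $B^\sigma_{p,\infty}$ with $\sigma>0$. The paraproduct estimate of Lemma~\ref{lem2.2} forces one factor to carry the positive Besov index, and that factor is inevitably $b_\theta$ (resp.\ $\omega_\theta$) itself --- precisely the unknown whose $B^\sigma$ norm you are trying to estimate. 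Your final paragraph acknowledges that $f$ ``involves $\omega_\theta$ itself'', but then claims the needed regularity is ``already supplied by the previous steps and by the controls of Section~\ref{Sec-3}''; it is not.

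The fix, which the paper carries out, is to use the \emph{full} time-dependent-index form of Propositions~\ref{losingestimate-1}--\ref{losingestimate--1} (the bound on $\|\rho(t)\|_{B^{\sigma_t}_{p,\infty}}$ in terms of $\int_0^t\|f(\tau)\|_{B^{\sigma_\tau}_{p,\infty}}^2\,\mathrm{d}\tau$) rather than the ``in particular'' corollary. One first shows independently that $\|u_r/r\|_{B^{\sigma_\tau}_{\infty,\infty}}\le C(t)$ via the $L^q$ bounds on $\omega_\theta/r$ from Proposition~\ref{basic} and a Bernstein argument (see \eqref{l--1}). Then Lemma~\ref{lem2.2} gives $\|(u_r/r)b_\theta(\tau)\|_{B^{\sigma_\tau}_{\infty,\infty}}\le C(t)\|b_\theta(\tau)\|_{B^{\sigma_\tau}_{\infty,\infty}}$ and similarly $\|(u_r/r)\omega_\theta(\tau)\|_{B^{\sigma_\tau}_{p,\infty}}\le C(t)\|\omega_\theta(\tau)\|_{B^{\sigma_\tau}_{p,\infty}}$, so the losing estimate becomes a linear Gronwall inequality in $\|b_\theta(\tau)\|_{B^{\sigma_\tau}_{\infty,\infty}}^2$ (resp.\ $\|\omega_\theta(\tau)\|_{B^{\sigma_\tau}_{p,\infty}}^2$) and closes. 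Without this Gronwall step the argument does not go through.
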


\begin{proof}
Since $(u_0,b_0)\in H^s\times H^s$ with $s>\frac52$, we know that the velocity vector field $u$ satisfies \eqref{l-5} and \eqref{l-4} by \eqref{basic0}, Proposition \ref{pro1} and Proposition \ref{prop-fine}. Since $u_0\in H^s$ with $s>\frac52$, we have $\|\frac{\omega_\theta}{r}(0)\|_{L^{3,1}}\leq C\|u_0\|_{H^s}$. In terms of Lemma \ref{con} and of Proposition \ref{basic}, we get that for $s>\frac52$,
\[\Big\|\frac{u_r}{r}(t)\Big\|_{L^\infty}\leq C\Big\|\frac{\omega_\theta}{r}(t)\Big\|_{L^{3,1}}\leq C\Big\|\frac{\omega_\theta}{r}(0)\Big\|_{L^{3,1}}\leq C\|u_0\|_{H^s}.\]
Therefore,  according to Proposition \ref{losingestimate-1}, for any $0<\sigma<1$, $0\leq\tau'<\tau<t\leq T$,  we have
\begin{equation}\label{losingpro}
\Big\|\frac{b_{\theta}}{r}\Big\|_{L^{\infty}_{T}(B_{\infty,\infty}^{\sigma_{\tau}})}\leq C(t)\Big\|\frac{b_{0}}{r}\Big\|_{B_{\infty,\infty}^{\sigma}},
\end{equation}
and
\begin{equation*}
\|b_{\theta}(\tau)\|_{B_{\infty,\infty}^{\sigma_{\tau}}}^{2}\leq C(t)\Big(\|b_{0}\|_{B_{\infty,\infty}^{\sigma}}^{2}+\int_{0}^{t}
\Big\|\Big(\frac{u_{r}}{r}b_{\theta}\Big)(\tau')\Big\|
_{B_{\infty,\infty}^{\sigma_{\tau'}}}^{2}\,\mathrm{d}\tau'\Big).
\end{equation*}
Here and in what follows, $C(t)$ is the smooth explicit function which may be different from line to line.

By using Lemma \ref{lem2.2}, we get
\begin{equation}\label{losingpro-0}\begin{split}
\Big\|\frac{u_{r}}{r}b_{\theta}\Big\|
_{B_{\infty,\infty}^{\sigma_{\tau'}}}\leq &C
\Big\|\frac{u_{r}}{r}\Big\|_{L^{\infty}}\|b_{\theta}\|
_{B_{\infty,\infty}^{\sigma_{\tau'}}}
+C\Big\|\frac{u_{r}}{r}\Big\|_{B_{\infty,\infty}^{\sigma_{\tau'}}}
\|b_{\theta}\|_{L^{\infty}}
\\ \leq &C\Big\|\frac{u_{r}}{r}\Big\|_{B_{\infty,\infty}^{\sigma_{\tau'}}}
\|b_{\theta}\|_{B_{\infty,\infty}^{\sigma_{\tau'}}}.
\end{split}
\end{equation}
By Bernstein inequality and Lemma \ref{point},  we deduce that for some $q\geq\frac{3}{1-\sigma_{\tau'}}$,
\begin{equation}\label{l--1}
\begin{split}
\Big\|\frac{u_{r}}{r}\Big\|_{B_{\infty,\infty}^{\sigma_{\tau'}}}=
&\sup_{j\geq-1}2^{j\sigma_{\tau'}}\Big\|\Delta_{j} \frac{u_{r}}{r}\Big\|_{L^{\infty}} \\ \leq&2^{-\sigma_{\tau'}}
\Big\|\Delta_{-1}\frac{u_{r}}{r}\Big\|_{L^\infty}+
\sup_{j\geq0}2^{j(\sigma_{\tau'}+\frac3q-1)}
\Big\|\Delta_{j}\partial_{z}\Big(\frac{u_{r}}{r}\Big)\Big\|_{L^q}\\ \leq&
C\Big(\Big\|\frac{u_{r}}{r}\Big\|_{L^\infty}
+\Big\|\frac{\omega_{\theta}}{r}\Big\|_{L^q}\Big).
\end{split}
\end{equation}
This together  with \eqref{add-w}, \eqref{basic0} and  Proposition \ref{basic} yields
\begin{equation*}
\Big\|\frac{u_{r}}{r}\Big\|_{B_{\infty,\infty}^{\sigma_{\tau'}}}
\leq C(t).
\end{equation*}
As a result,
\begin{equation*}
\|b_{\theta}(\tau)\|_{B_{\infty,\infty}^{\sigma_{\tau}}}^{2}\leq C(t)\Big(\|b_{0}\|_{B_{\infty,\infty}^{\sigma}}^{2}+\int_{0}^{t}
\|b_{\theta}(\tau')\|
_{B_{\infty,\infty}^{\sigma_{\tau'}}}^{2}\,\mathrm{d}\tau'\Big).
\end{equation*}
The Gronwall inequality implies that
\begin{equation}\label{b-Besov}
\sup_{0\leq\tau<\tau'}\|b_{\theta}(\tau)\|_{B_{\infty,\infty}^{\sigma_{\tau}}}\leq C(t)\|b_{0}\|_{B_{\infty,\infty}^{\sigma}}.
\end{equation}

Note that $\omega_{\theta}$  solves system \eqref{losingestimate} with $f=\frac{\omega_{\theta}}{r}u_{r}$, $g=-\frac{b_{\theta}^{2}}{r}$.  By using Proposition \ref{losingestimate--1}, we obtain that for the arbitrary $p\in[2,\infty),$
\begin{equation}\label{losingpro3}
\begin{split}
\|\omega_{\theta}(t)\|_{B_{p,\infty}^{\sigma_{t}}}^{2}\leq &
C(t)\Big(\|\omega_{0}\|_{B_{p,\infty}^{\sigma}}^{2}+
\int_{0}^{t}\Big\|\Big(\frac{u_{r}}{r}\omega_{\theta}\Big)(\tau)\Big\|
_{B_{p,\infty}^{\sigma_{\tau}}}^{2}\,\mathrm{d}\tau+
\int_{0}^{t}\Big\|\Big(\frac{b_{\theta}}{r}b_{\theta}\Big)(\tau)\Big\|
_{B_{p,\infty}^{\sigma_{\tau}}}^{2}\mathrm{d}\tau\Big).
\end{split}
\end{equation}
Now we need to estimate  two terms appearing on the right hand side of inequality \eqref{losingpro3}.
Since $0\leq\sigma_\tau<1$, by virtue of Lemma~\ref{lem2.2}, one has that
\begin{equation*}
\begin{split}
\Big\|\frac{u_{r}}{r}\omega_{\theta}\Big\|_{B_{p,\infty}^{\sigma_{\tau}}}
\leq& C\Big\|\frac{u_{r}}{r}\Big\|_{L^{\infty}}\|\omega_{\theta}\|
_{B_{p,\infty}^{\sigma_{\tau}}}
+C\|\omega_{\theta}\|_{L^{p}}
\Big\|\frac{u_{r}}{r}\Big\|_{B_{\infty,\infty}^{\sigma_{\tau}}}\\ \leq &C\Big\|\frac{u_{r}}{r}\Big\|_{B_{\infty,\infty}^{\sigma_{\tau}}}\|\omega_{\theta}\|
_{B_{p,\infty}^{\sigma_{\tau}}}\\ \leq &C(t)\|\omega_{\theta}\|
_{B_{p,\infty}^{\sigma_{\tau}}}.
\end{split}
\end{equation*}
In the last line of the inequality above, we have argued similarly as the proof of  \eqref{l--1} to get
$$\Big\|\frac{u_{r}}{r}\Big\|_{B_{\infty,\infty}^{\sigma_{\tau}}} \leq C(t).$$
By using Lemma \ref{lem2.2} again and \eqref{losingpro}, \eqref{b-Besov}, we can also bound $\big\|\big(\frac{b_{\theta}}{r}b_{\theta}\big)(\tau)\big\|
_{B_{p,\infty}^{\sigma_{\tau}}}$.
Thus, by the Gronwall inequality, we can get for the arbitrary $p\in[2,\infty),$
\begin{equation}\label{f2}
\sup_{0\leq t\leq T}\|\omega_{\theta}(t)\|_{B_{p,\infty}^{\sigma-\epsilon}}\leq\sup_{0\leq t\leq T}\|\omega_{\theta}(t)\|_{B_{p,\infty}^{\sigma_{t}}}\leq
C(t).
\end{equation}
Now we are in the position to prove that $$\int_{0}^{T}\|\nabla u(t)\|_{L^{\infty}}\,\mathrm{d}t<\infty,$$ which plays an important role in the proof of Theorem \ref{the}.
In fact, using the Bernstein inequality, we have that  by choosing $\sigma>\epsilon+\frac3p$  with sufficiently large $p$,
\begin{equation}\label{eq.delta}
\begin{split}
\|\nabla u\|_{L^{\infty}}\leq&\|\Delta_{-1}\nabla u\|_{L^{\infty}}+\sum_{j\geq0}\|\Delta_{j}\nabla u\|_{L^{\infty}}\\ \leq&
C(\|u\|_{L^{2}}+\sum_{j\geq0}2^{-j(\sigma-\epsilon-\frac3p)}2^{j(\sigma-\epsilon)}\|\Delta_{j}\omega\|_{L^\infty})\\ \leq&
C\Big(\|u_{0}\|_{L^{2}}+\|\omega\|_{B_{p,\infty}^{\sigma-\epsilon}}\Big).
\end{split}
\end{equation}
It follows by \eqref{f2} that $$\int_{0}^{T}\|\nabla u(t)\|_{L^{\infty}} \,\mathrm{d}t\leq C(T).$$
Since $p$ in \eqref{f2} and $\epsilon$ are arbitrary, estimate \eqref{eq.delta} holds for all $\sigma>0.$ On the other hand, we see that the initial datal data $(u_0,b_0)\in H^s\times H^s$ with $s>\frac52$, which guarantees that there exists a small enough  $\sigma>0$ such that $b_0\in B^\sigma_{\infty, \infty}$, $\frac{b_0}{r}\in B^\sigma_{\infty, \infty}$ and $\omega_\theta\in B^\sigma_{p,\infty}$ for  all $p\in[2,\infty]$. This completes  the proof.
\end{proof}

\subsection{High regularity for $(u,b)$}
In this subsection, we are going to derive the $H^{s}\times H^{s}$ $(s>\frac52)$ a priori estimates of $(u,b)$ associated to  system \eqref{bs} to gain the loss of regularity which occurs in Proposition~\ref{prop-losingestimate}.
\begin{prop}\label{high}
Let  $(u_{0},b_{0})\in H^{s}\times H^{s}$, $s>\frac{5}{2}$ satisfying the conditions stated in Theorem \ref{the}. Assume that $(u, b)$ be the smooth solution of system \eqref{bs}.  Then, for any $t\geq0$, there exists a constant $C>0$ depending  only on $t$ and the initial data such that
\begin{align*}
\Big\|\Big(\omega_{\theta}, b_{\theta},\nabla b_\theta,\frac{b_{\theta}}{r}\Big)(t)\Big\|^{2}_{H^{s-1}}+\int^{t}_{0}\norm{\partial_z\omega_{\theta}(\tau)}^{2}_{H^{s-1}}\,\mathrm{d}\tau\nonumber \leq C.
\end{align*}
\end{prop}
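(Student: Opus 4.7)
The strategy is a coupled $L^2$-based energy estimate for the four quantities $\omega_\theta,\,b_\theta,\,\nabla b_\theta,\,b_\theta/r$, all at regularity $H^{s-1}$ with $s-1>3/2$, closed by Gronwall using two crucial inputs: the Lipschitz bound $\int_0^t\|\nabla u(\tau)\|_{L^\infty}\,\mathrm{d}\tau<\infty$ supplied by Proposition~\ref{prop-losingestimate}, and the maximum principle $\|b_\theta/r\|_{L^\infty}\leq C$ from Proposition~\ref{b}. Besides these, I will repeatedly use Biot--Savart $\|u\|_{H^s}\lesssim\|u\|_{L^2}+\|\omega_\theta\|_{H^{s-1}}$, the Sobolev embedding $H^{s-1}\hookrightarrow L^\infty$, Moser's product inequality, and Lemma~\ref{point} to control $\|u_r/r\|_{H^{s-1}}$ by $\|\omega_\theta/r\|_{H^{s-2}}$ plus lower order terms coming from Propositions~\ref{basic}--\ref{pro1}.

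First I would treat $b_\theta/r$, which solves the homogeneous transport equation \eqref{b-equ}. A dyadic estimate applied to $\Delta_q(b_\theta/r)$ together with the standard transport commutator estimate (Lemma~\ref{commutator-est}) gives
\begin{equation*}
\tfrac{\mathrm{d}}{\mathrm{d}t}\|b_\theta/r\|_{H^{s-1}}\leq C\|\nabla u\|_{L^\infty}\|b_\theta/r\|_{H^{s-1}},
\end{equation*}
so $\|b_\theta/r\|_{L^\infty_t H^{s-1}}<\infty$ by Gronwall and the Lipschitz bound. Next I would perform an $H^s$ energy estimate on the scalar equation $\partial_t b_\theta+(u\cdot\nabla)b_\theta=u_r(b_\theta/r)$; the commutator produces $\|\nabla u\|_{L^\infty}\|b_\theta\|_{H^s}+\|u\|_{H^s}\|\nabla b_\theta\|_{L^\infty}$, the source is handled by Moser as $\|u_r(b_\theta/r)\|_{H^s}\lesssim\|u_r\|_{L^\infty}\|b_\theta/r\|_{H^s}+\|u_r\|_{H^s}\|b_\theta/r\|_{L^\infty}$, and Sobolev embedding converts $\|\nabla b_\theta\|_{L^\infty}$ into $\|b_\theta\|_{H^s}$. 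These bounds control both $\|b_\theta\|_{H^{s-1}}$ and $\|\nabla b_\theta\|_{H^{s-1}}$.

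For $\omega_\theta$ I would apply $\Delta_q$ to equation~\eqref{w}, multiply by $\Delta_q\omega_\theta$, and integrate to exploit the vertical diffusion, producing the smoothing term $\|\partial_z\omega_\theta\|_{H^{s-1}}^2$ on the left. On the right the transport commutator contributes $\|\nabla u\|_{L^\infty}\|\omega_\theta\|_{H^{s-1}}^2$; the vortex-stretching term $(u_r/r)\omega_\theta$ is estimated via Moser with $\|u_r/r\|_{L^\infty}\lesssim\|\omega_\theta/r\|_{L^{3,1}}$ from \eqref{basic0}, while $\|u_r/r\|_{H^{s-1}}$ is bounded via Lemma~\ref{point} and the a~priori $L^2_t H^1$ estimate for $\omega_\theta/r$ in Proposition~\ref{basic}; the magnetic source $\partial_z(b_\theta^2/r)=2(b_\theta/r)\partial_z b_\theta$ is first treated by integrating the factor $\partial_z$ against $\Delta_q\omega_\theta$ (to put $\partial_z$ onto the vorticity, matching the diffusion level), then estimated by Moser in terms of $\|b_\theta/r\|_{L^\infty}$, $\|b_\theta\|_{L^\infty}$, $\|\nabla b_\theta\|_{H^{s-1}}$, and $\|b_\theta/r\|_{H^{s-1}}$, with the resulting $\|\partial_z\omega_\theta\|_{H^{s-1}}$ factor absorbed by Young's inequality into the LHS smoothing.

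Putting Steps~1--3 together and writing $E(t):=\|\omega_\theta\|_{H^{s-1}}^2+\|b_\theta\|_{H^{s-1}}^2+\|\nabla b_\theta\|_{H^{s-1}}^2+\|b_\theta/r\|_{H^{s-1}}^2$, I expect an inequality of the form
\begin{equation*}
\tfrac{\mathrm{d}}{\mathrm{d}t}E(t)+\|\partial_z\omega_\theta\|_{H^{s-1}}^2\leq C\bigl(1+\|\nabla u\|_{L^\infty}\bigr)\bigl(1+E(t)\bigr)^{3/2},
\end{equation*}
from which a (possibly nonlinear) Gronwall argument on any finite interval $[0,t]$ yields $\sup_{[0,t]} E<\infty$ together with the $L^2$-in-time control of $\partial_z\omega_\theta$ in $H^{s-1}$. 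The main obstacle I anticipate is the $b_\theta^2/r$ source in the vorticity equation: the product $b_\theta\partial_z b_\theta$ is of the same order as $b_\theta\in H^s$, so without magnetic dissipation it cannot be estimated as a gain; the trick is to move $\partial_z$ onto $\Delta_q\omega_\theta$ through integration by parts, thereby converting the source into something quadratic in $\|\partial_z\omega_\theta\|_{H^{s-1}}$ that can be split by Young's inequality and partially absorbed by the parabolic smoothing on the left-hand side. The factor $\|u\|_{H^s}$ appearing in the commutator on $b_\theta$ is the secondary subtlety, but it is controlled by $E(t)$ via Biot--Savart, producing only a mild super-linear growth that remains finite on bounded intervals.
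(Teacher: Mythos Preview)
Your overall architecture---dyadic $L^2$ estimates via Lemma~\ref{commutator-est}, the integration-by-parts trick moving $\partial_z$ from $b_\theta^2/r$ onto $\Delta_q\omega_\theta$ so as to absorb it into the vertical diffusion, and closing against $\int_0^t\|\nabla u\|_{L^\infty}$---is exactly the paper's. The gap is in how you close the coupled system.

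A differential inequality $\tfrac{\mathrm d}{\mathrm dt}E\le C(1+\|\nabla u\|_{L^\infty})(1+E)^{3/2}$ does \emph{not} give $\sup_{[0,t]}E<\infty$ for all $t$: integrating $(1+E)^{-3/2}\,\mathrm dE$ shows that $E$ blows up once $\int_0^t C(1+\|\nabla u\|_{L^\infty})\,\mathrm d\tau$ exceeds $2(1+E(0))^{-1/2}$, and that integral is unbounded in $t$. The superlinearity is created precisely by including $\|\nabla b_\theta\|_{H^{s-1}}\sim\|b_\theta\|_{H^s}$ in $E$: at regularity $s>\tfrac52$ the transport commutator for $b_\theta$ produces $\|\nabla b_\theta\|_{L^\infty}\,\|u\|_{H^s}$, and since $\|\nabla b_\theta\|_{L^\infty}$ has no a~priori bound you must estimate it by $\sqrt E$. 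A related symptom is your source bound $\|u_r(b_\theta/r)\|_{H^s}\lesssim\|u_r\|_{L^\infty}\|b_\theta/r\|_{H^s}+\|u_r\|_{H^s}\|b_\theta/r\|_{L^\infty}$: it requires $b_\theta/r\in H^s$, one derivative more than the data supply (Theorem~\ref{the} only places $b_\theta/r$ in $H^{s-1}$). Your standalone inequality for $b_\theta/r$ in $H^{s-1}$ is likewise incomplete: since $s-1>1$, estimate~\eqref{eq-last-2} contributes a second term $\|b_\theta/r\|_{L^\infty}\|\nabla u\|_{H^{s-1}}$ that cannot be absorbed into $\|\nabla u\|_{L^\infty}\|b_\theta/r\|_{H^{s-1}}$ and already forces coupling with $\omega_\theta$.

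The paper avoids all of this by running the coupled energy only on $X=\|(\omega_\theta,b_\theta,b_\theta/r)\|_{H^{s-1}}^2$, \emph{without} $\nabla b_\theta$. At that level every commutator term has the form $\|v\|_{L^\infty}\|\nabla u\|_{H^{s-1}}$ with $v\in\{\omega_\theta,\,b_\theta,\,b_\theta/r\}$; all three $L^\infty$ norms are already controlled (Propositions~\ref{b}, \ref{basic2}, and $|\omega_\theta|\le|\nabla u|$), while $\|\nabla u\|_{H^{s-1}}\sim\|\omega_\theta\|_{H^{s-1}}\le\sqrt X$. The resulting inequality is therefore \emph{linear} in $X$ and closes by ordinary Gronwall. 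Only afterwards, with $\|u\|_{L^\infty_tH^s}$ in hand as a known coefficient, does the paper upgrade $b_\theta$ to $H^s$ in a separate second step. Reorganize your argument in this two-stage order and the $(1+E)^{3/2}$ obstruction disappears.
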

\begin{proof}
 Applying $\Delta_{q}$ to equality \eqref{b-equ} leads to
$$\partial_{t}\Delta_{q}\frac{b_{\theta}}{r}+
\big(S_{q+1}u\cdot\nabla\big)\Delta_{q}\frac{b_{\theta}}{r}
=R_{q}\Big(u,\frac{b_{\theta}}{r}\Big),$$ where $$R_{q}\Big(u,\frac{b_{\theta}}{r}\Big)= \big(S_{q+1}u\cdot\nabla\big)\Delta_q\frac{b_{\theta}}{r}-
\Delta_q\Big(\big(u\cdot\nabla\big) \frac{b_{\theta}}{r}\Big).$$
By taking the $L^{2}$-norm to this equation and using H\"{o}lder's inequality, we conclude that
\begin{equation}\label{add-0}
\begin{split}
&\frac12\frac{\mathrm{d}}{\mathrm{d}t}\Big\|\Delta_{q}\frac{b_{\theta}}{r}(t)\Big\|
^{2}_{L^{2}}\leq\Big\|R_{q}\Big(u,\frac{b_{\theta}}{r}\Big)\Big\|_{L^2}
\Big\|\Delta_{q}\frac{b_{\theta}}{r}\Big\|_{L^{2}}.
\end{split}
\end{equation}
By \eqref{eq-last-2}, we have
\begin{equation*}
\Big\|R_{q}\Big(u,\frac{b_{\theta}}{r}\Big)\Big\|_{L^2}\leq
C\|\nabla u\|_{L^\infty}\displaystyle{\sum_{q'\geq q
-4}}2^{q-q'}\Big\|\Delta_{q'}\frac{b_{\theta}}{r}\Big\|_{L^2}
+C\Big\|\frac{b_{\theta}}{r}\Big\|_{L^\infty}
\displaystyle{\sum_{|q'-q|\leq4}}\|\Delta_{q'}\nabla u\|_{L^2}.
\end{equation*}
Letting $\alpha=s-1.$ Plugging this commutator estimate into \eqref{add-0} and multiplying the resulting inequality by $2^{2q\alpha}$ and summing up over $q\geq-1$, we get
\begin{equation}\label{o}
\begin{split}
\frac{\mathrm{d}}{\mathrm{d}t}\Big\|\frac{b_{\theta}}{r}(t)\Big\|^{2}_{H^{\alpha}}&\leq C\|\nabla u\|_{L^{\infty}}\Big\|\frac{b_{\theta}}{r}\Big\|_{H^{\alpha}}^{2}+C\norm{\nabla u}_{H^{\alpha}}\Big\|\frac{b_{\theta}}{r}\Big\|_{L^{\infty}}\Big\|\frac{b_{\theta}}{r}\Big\|_{H^{\alpha}}.
\end{split}
\end{equation}

Now we turn to show the estimate of $b_{\theta}$.
Applying  operator $\Delta_{q}$ to the third equation of system~\eqref{MHDaxi}, we thus get
$$\partial_{t}\Delta_{q}b_{\theta}+
(S_{q+1}u\cdot\nabla)\Delta_{q}b_{\theta}
=R_{q}(u,b_{\theta})+\Delta_{q}\Big(\frac{u_{r}b_{\theta}}{r}\Big),$$
where $$R_{q}(u,b_{\theta})= (S_{q+1}u\cdot\nabla)\Delta_q b_{\theta}-\Delta_q\big((u\cdot\nabla )b_{\theta}\big).$$
In a similar way as to obtain \eqref{o}, we can get
\begin{equation*}
\begin{split}
\frac{\mathrm{d}}{\mathrm{d}t}\norm{b_{\theta}(t)}^{2}_{H^{\alpha}} &\leq C\Big(\norm{\nabla u}_{L^{\infty}}\norm{b_{\theta}}^{2}_{H^{\alpha}}+
\norm{\nabla u}_{H^{\alpha}}\norm{b_{\theta}}_{L^{\infty}}\norm{b_{\theta}}_{H^{\alpha}}+
\Big\|\frac{u_{r}b_{\theta}}{r}\Big\|_{H^{\alpha}}\norm{b_{\theta}}_{H^{\alpha}}\Big).
\end{split}
\end{equation*}
Moreover, by using  \eqref{eq-last-2}, we get
\begin{equation}\label{bb}
\begin{split}
\frac{\mathrm{d}}{\mathrm{d}t}\norm{b_{\theta}(t)}^{2}_{H^{\alpha}}\leq&
C\Big(\norm{\nabla u}_{L^{\infty}}\norm{b_{\theta}}^{2}_{H^{\alpha}}+
\norm{b_{\theta}}_{L^{\infty}}\norm{\nabla u}_{H^{\alpha}}\norm{b_{\theta}}_{H^{\alpha}}\\+&
\norm{b_{\theta}}_{L^{\infty}}\Big\|\frac{u_{r}}{r}\Big\|_{H^{\alpha}}\norm{b_{\theta}}_{H^{\alpha}}
+\Big\|\frac{u_{r}}{r}\Big\|_{L^{\infty}}\norm{b_{\theta}}^{2}_{H^{\alpha}}\Big).
\end{split}
\end{equation}

Finally, in order to get the  $H^{\alpha}$-estimate of $\omega_{\theta}$,
one may apply  operator $\Delta_{q}$ to the vorticity equation~\eqref{w} to obtain
$$\partial_{t}\Delta_{q}\omega_{\theta}+
(S_{q+1}u\cdot\nabla)\Delta_{q}\omega_{\theta}-\partial^{2}_{zz}\Delta_{q}\omega_{\theta}
=R_{q}(u,\omega_{\theta})+\Delta_{q}\Big(\frac{u_{r}\omega_{\theta}}{r}\Big)
-\Delta_{q}\Big(\frac{\partial_{z}b_{\theta}^{2}}{r}\Big),$$
where $$R_{q}(u,\omega_{\theta})= (S_{q+1}u\cdot\nabla)\Delta_q \omega_{\theta}-\Delta_q\big((u\cdot\nabla )\omega_{\theta}\big).$$
By taking the $L^{2}$-inner product with $\Delta_q\omega_{\theta}$ and using the incompressible condition, we  obtain
\begin{equation*}
\begin{split}
&\frac12\frac{\mathrm{d}}{\mathrm{d}t}\norm{\Delta_q\omega_{\theta}(t)}^{2}_{L^{2}}
+\norm{\partial_{z}\Delta_{q}\omega_{\theta}(t)}_{L^{2}}^{2}\\\leq&\norm{R_{q}(u,\omega_{\theta})}_{L^{2}}\norm{\Delta_{q}\omega_{\theta}}_{L^{2}}
+\Big\|\Delta_{q}\Big(\frac{u_{r}\omega_{\theta}}{r}\Big)\Big\|_{L^{2}}\norm{\Delta_{q}\omega_{\theta}}_{L^{2}}
+\Big\|\Delta_{q}\Big(\frac{b_{\theta}^{2}}{r}\Big)\Big\|_{L^{2}}\norm{\partial_{z}\Delta_{q}\omega_{\theta}}_{L^{2}}.
\end{split}
\end{equation*}
By using \eqref{eq-last-2} and Lemma \ref{lem2.2} again, multiplying both sides by $2^{2q\alpha}$ and summing up over $q\geq-1$, we have
\begin{equation}\label{vor}
\begin{split}
&\frac{\mathrm{d}}{\mathrm{d}t}\norm{\omega_{\theta}(t)}^{2}_{H^{\alpha}}+
\norm{\partial_{z}\omega_{\theta}(t)}^{2}_{H^{\alpha}}\\\leq& C\Big(\norm{\nabla u}_{L^{\infty}}\norm{\omega_{\theta}}^{2}_{H^{\alpha}}+
\norm{\omega_{\theta}}_{L^{\infty}}\norm{\nabla u}_{H^{\alpha}}\norm{\omega_{\theta}}_{H^{\alpha}}+
\Big\|\frac{u_{r}\omega_{\theta}}{r}\Big\|_{H^{\alpha}}\norm{\omega_{\theta}}_{H^{\alpha}}
+\Big\|\frac{b_{\theta}^{2}}{r}\Big\|^{2}_{H^{\alpha}}\Big)\\\leq&
C\Big(\norm{\nabla u}_{L^{\infty}}\norm{\omega_{\theta}}^{2}_{H^{\alpha}}+\norm{\omega_{\theta}}_{L^{\infty}}
\norm{\nabla u}_{H^{\alpha}}\norm{\omega_{\theta}}_{H^{\alpha}}+
\norm{\omega_{\theta}}_{L^{\infty}}\Big\|\frac{u_{r}}{r}\Big\|_{H^{\alpha}}\norm{\omega_{\theta}}_{H^{\alpha}}
\\&+\Big\|\frac{u_{r}}{r}\Big\|_{L^{\infty}}\norm{\omega_{\theta}}^{2}_{H^{\alpha}}+
\norm{b_{\theta}}^{2}_{L^{\infty}}\Big\|\frac{b_{\theta}}{r}\Big\|^{2}_{H^{\alpha}}+
\norm{b_{\theta}}^{2}_{H^{\alpha}}\Big\|\frac{b_{\theta}}{r}\Big\|^{2}_{L^{\infty}}\Big).
\end{split}
\end{equation}

By putting together these estimates \eqref{o}, \eqref{bb} and \eqref{vor} and using the Young inequality, we obtain
\begin{equation*}
\begin{split}
&\frac{\mathrm{d}}{\mathrm{d}t}\Big\|\Big(\omega_{\theta}, b_{\theta},\frac{b_{\theta}}{r}\Big)(t)\Big\|^{2}_{H^{\alpha}}+
\norm{\partial_{z}\omega_{\theta}(t)}^{2}_{H^{\alpha}}\\ \leq& C\Big(1+\|\nabla u\|_{L^{\infty}}+\norm{b_{\theta}}^{2}_{L^{\infty}}+
\Big\|\frac{b_{\theta}}{r}\Big\|^{2}_{L^{\infty}}\Big)\Big\|\Big(\omega_{\theta}, b_{\theta},\frac{b_{\theta}}{r}\Big)\Big\|^{2}_{H^{\alpha}}.
\end{split}
\end{equation*}
The facts that $\norm{\nabla u}_{H^\alpha}$ is equivalent to $\norm{\omega_{\theta}}_{H^\alpha}$ and $|\omega_{\theta}|\leq |\nabla u|$ are also used in the last inequality.

As a result, by the Gronwall lemma and the  estimates in Proposition \ref{b} and Proposition \ref{basic2}, we have
\begin{equation*}
\Big\|\Big(\omega_{\theta}, b_{\theta},\frac{b_{\theta}}{r}\Big)(t)\Big\|^{2}_{H^{\alpha}}+\int^{t}_{0}
\norm{\partial_{z}\omega_{\theta}(\tau)}^{2}_{H^{\alpha}}\,\mathrm{d}\tau
\leq C\Big\|\Big(\omega_{\theta}(0), b_{\theta}(0),\frac{b_{\theta}}{r}(0)\Big)\Big\|^{2}_{H^\alpha} e^{C\int_{0}^{t}\norm{\nabla u(\tau)}_{L^{\infty}}\,\mathrm{d}\tau}.
\end{equation*}
This combined with \mbox{Proposition \ref{prop-losingestimate}} provides
\begin{equation}\label{ff}
\Big\|\Big(\omega_{\theta}, b_{\theta},\frac{b_{\theta}}{r}\Big)(t)\Big\|^{2}_{H^{\alpha}}+\int^{t}_{0}
\norm{\partial_{z}\omega_{\theta}(\tau)}^{2}_{H^{\alpha}}\,\mathrm{d}\tau
\leq C.
\end{equation}
It remains for us to show estimate of $\|\nabla b_\theta\|_{H^\alpha}$. The classical commutator estimate helps us to conclude that
\begin{equation*}
\|\nabla b_\theta\|_{H^\alpha}^2\leq C\|\nabla b_\theta(0)\|_{H^\alpha}^2e^{\int_0^t\|\nabla u(\tau)\|_{H^s}\,\mathrm{d}\tau}.
\end{equation*}
Thanks to estimate \eqref{ff}, we finally obtain the desired result $\|b_\theta\|_{H^s}<\infty$ and then we completes the proof of the proposition.
\end{proof}

\section{Proof of Theorem \ref{the}}\label{Sec-4}
\setcounter{section}{4}\setcounter{equation}{0}
In this section, we restrict our attention to prove Theorem \ref{the}.
Firstly, we focus on the existence statement of Theorem \ref{the}. Let us begin with the following proposition which is about the local well-posedness for system \eqref{bs}.
\begin{prop}\label{Prop4.1}
Let $(u_{0},b_{0})\in H^{s}\times H^{s}$ with $s>\frac{5}{2}$. Then, there exists a maximal time $T>0$ depending only on $\norm{(u_{0}, b_{0})}_{H^s}$ such that system \eqref{bs} admits a unique local-in-time solution $(u,b)$ satisfying $u\in C([0,T);H^{s})$ and $b\in C([0,T);H^{s})$. Moreover, $\partial_z u\in L^{2}(0,T;H^{s})$.
\end{prop}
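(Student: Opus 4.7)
The strategy is the classical Friedrichs approximation combined with $H^{s}$ a priori estimates exploiting the vertical dissipation. Let $J_n$ denote the Fourier cutoff that keeps frequencies $|\xi|\leq n$, and consider the regularized system
\begin{equation*}
\left\{\begin{array}{ll}
\partial_t u^n+J_n\mathcal{P}\bigl((J_nu^n\cdot\nabla)J_nu^n\bigr)-\partial_{zz}^{2}J_nu^n=J_n\mathcal{P}\bigl((J_nb^n\cdot\nabla)J_nb^n\bigr),\\
\partial_t b^n+J_n\bigl((J_nu^n\cdot\nabla)J_nb^n\bigr)=J_n\bigl((J_nb^n\cdot\nabla)J_nu^n\bigr),\\
(u^n,b^n)|_{t=0}=(J_nu_0,J_nb_0),
\end{array}\right.
\end{equation*}
where $\mathcal{P}$ is the Leray projector. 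Since each right-hand side is locally Lipschitz from $L^2\times L^2$ into itself, the Cauchy--Lipschitz theorem on the closed subspace $L^2_n:=\{f\in L^2:J_nf=f\}$ produces a unique smooth solution $(u^n,b^n)$; the identity $J_n^2=J_n$ keeps the iterate in $L^2_n$, and the basic $L^2$ energy identity forbids finite-time blow-up, so the solution is global. An inspection of the equations also shows that the axisymmetric structure $u^n=u_r^ne_r+u_z^ne_z$, $b^n=b_\theta^ne_\theta$ is preserved.

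Next I would derive a uniform $H^{s}$ bound on a time interval independent of $n$. Applying $\Lambda^{s}$ to each equation, pairing with $\Lambda^{s}u^n$ and $\Lambda^{s}b^n$ respectively, and invoking the Kato--Ponce commutator estimate
\begin{equation*}
\bigl\|[\Lambda^{s},v\cdot\nabla]w\bigr\|_{L^2}\leq C\bigl(\|\nabla v\|_{L^\infty}\|w\|_{H^s}+\|v\|_{H^s}\|\nabla w\|_{L^\infty}\bigr),
\end{equation*}
together with the antisymmetric cancellation
\begin{equation*}
\int_{\mathbb{R}^3}\bigl(\Lambda^{s}((b\cdot\nabla)b)\cdot\Lambda^{s}u+\Lambda^{s}((b\cdot\nabla)u)\cdot\Lambda^{s}b\bigr)\,\mathrm{d}\mathrm{\mathbf{x}}=-\int_{\mathbb{R}^3}\bigl([\Lambda^{s},b\cdot\nabla]b\cdot\Lambda^{s}u+[\Lambda^{s},b\cdot\nabla]u\cdot\Lambda^{s}b\bigr)\,\mathrm{d}\mathrm{\mathbf{x}},
\end{equation*}
and the Sobolev embedding $H^{s-1}\hookrightarrow L^\infty$ valid for $s>\tfrac52$, one obtains the closed differential inequality
\begin{equation*}
\frac{\mathrm{d}}{\mathrm{d}t}\bigl(\|u^n\|_{H^s}^2+\|b^n\|_{H^s}^2\bigr)+2\|\partial_zu^n\|_{H^s}^2\leq C\bigl(\|u^n\|_{H^s}^2+\|b^n\|_{H^s}^2\bigr)^{3/2}.
\end{equation*}
A comparison with the scalar ODE yields a life-span $T=T(\|(u_0,b_0)\|_{H^s})>0$ on which $\|(u^n,b^n)\|_{L^\infty_TH^s}+\|\partial_zu^n\|_{L^2_TH^s}$ is bounded independently of $n$.

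To pass to the limit I would show that $(u^n,b^n)$ is Cauchy in $C([0,T];L^2)$: the $L^2$ energy estimate for the difference $(u^n-u^m,b^n-b^m)$ closes via Gronwall once the uniform $H^s$ bound is used to control $\|\nabla u\|_{L^\infty}$ and $\|\nabla b\|_{L^\infty}$, and the error coming from $J_n-J_m$ tends to zero. Interpolation with the uniform $H^s$ bound gives strong convergence in $C([0,T];H^{s'})$ for every $s'<s$, which is enough to pass to the limit in the nonlinearities; weak-$\ast$ compactness then provides a solution $(u,b)\in L^\infty_TH^s$ with $\partial_zu\in L^2_TH^s$, and a Bona--Smith regularization argument upgrades weak-$\ast$ continuity to strong continuity $C([0,T);H^s)$. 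Uniqueness follows from the same $L^2$ energy estimate for the difference of two $H^s$ solutions, closed by Gronwall using the Lipschitz bound on $(u,b)$. The main technical obstacle is the absence of diffusion on $b$: every commutator and product involving $b$ must be absorbed purely by the Gronwall mechanism, and it is the antisymmetric cancellation between the two Lorentz-type coupling terms at the $\Lambda^s$ level that allows the $H^s$ estimate to close without any parabolic smoothing on the magnetic field.
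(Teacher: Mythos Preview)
Your proof is correct and follows essentially the same route as the paper: Friedrichs regularization, a Kato--Ponce commutator estimate to close the $H^s$ differential inequality $X'\leq CX^2$, and an $L^2$ energy estimate on the difference for uniqueness. The only minor deviations are in the passage to the limit (you show the approximants are Cauchy in $C_TL^2$ and interpolate, whereas the paper extracts a subsequence via Aubin--Lions) and in the argument for strong $H^s$ continuity (you invoke Bona--Smith, the paper appeals to the transport structure with $L^2_TH^s$ source); both alternatives are standard, and your remark on preservation of the axisymmetric ansatz, while true, is not needed for this proposition.
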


\begin{proof}
The result can be obtained by the Friedrichs method (see \cite{bcd} for more details): For $n\geq 1$, let $J_n$ be the spectral cut-off defined by
\begin{equation*}
\widehat{J_{n}f}(\xi)=1_{[0,n]}(|\xi|)\widehat{f}(\xi), \quad \xi \in\RR^3.
\end{equation*}
We consider the following system in the spaces $L^{2}_{n}:=\{f\in L^2(\RR^{3})|\,\text{supp}\, \widehat{f}\subset B(0,n)\}$:
\begin{equation}\label{appdexi}
\begin{cases}
\partial_tu+\mathcal{P}J_{n}\text{div}\,(\mathcal{P}J_nu\otimes \mathcal{P}J_nu)-\partial^{2}_{zz}J_nu=\mathcal{P}J_n\text{div}\,(\mathcal{P}J_nb\otimes \mathcal{P}J_nb),\\
\partial_tb +\mathcal{P}J_{n}\text{div}\,(\mathcal{P}J_nu\otimes \mathcal{P}J_nb)=\mathcal{P}J_{n}\text{div}\,(\mathcal{P}J_nb\otimes \mathcal{P}J_nu),\\
(u,b)|_{t=0}=J_{n}(u_0,b_0).
\end{cases}
\end{equation}
The Cauchy-Lipschitz theorem yields that there exists a unique maximal solution $(u_n,b_n)\in \mathcal{C}^{1}([0,T^{*}_{n});L^{2}_{n})$. Recall that $J^{2}_n=J_n, \mathcal{P}^{2}=\mathcal{P}$ and $J_n\mathcal{P}=\mathcal{P}J_n$, it is easy to check that $(\mathcal{P}u_n,\mathcal{P}b_n)$ and $(J_nu_n,J_nb_n)$ are also solutions.  By the uniqueness,  $\mathcal{P}u_n=u_n(\text{i.e. div} u_n=0)$, $J_nu_n=u_n$, $\mathcal{P}b_n=b_n(\text{i.e. div} b_n=0)$ and $J_nb_n=b_n$. Therefore, system \eqref{appdexi} can be simplified as
\begin{equation}\label{approx}
\begin{cases}
\partial_tu_{n}+\mathcal{P}J_{n}\text{div}\,(u_{n}\otimes u_{n})-\partial^{2}_{zz}u_{n}=\mathcal{P}J_n(b_{n}\otimes b_{n}),\\
\partial_tb_{n} +\mathcal{P}J_n\text{div}\,(u_{n} \otimes b_{n})=\mathcal{P}J_n\text{div}\,(b_{n} \otimes u_{n}),\\
\text{div}\,u_{n}=\text{div}\,b_{n}=0,\\
(u_{n},b_{n})|_{t=0}=J_{n}(u_0,b_0).
\end{cases}
\end{equation}
Since the operators $J_n$ and $\mathcal{P}J_n$ are the orthogonal projectors for the $L^2$-inner product. The classical commutator estimate enables us to conclude that the approximate solution $(u_n, b_n)$ of system \eqref{approx} satisfies
\begin{equation*}
\frac{\mathrm{d}}{\mathrm{d}t}\left\|\big(u_{n}, b_{n}\big)(t)\right\|^{2}_{H^{s}}+2
\norm{\partial_{z}u_{n}(\tau)}^{2}_{H^{s}}\mathrm{d}\tau\leq C\big(\|\nabla u_{n}\|_{L^\infty}+\|\nabla b_{n}\|_{L^\infty}\big)\left\|\big(u_{n}, b_{n}\big)(t)\right\|^{2}_{H^{s}}.
\end{equation*}
Since $s>\frac52$, the space $H^s(\mathbb{R}^3)$ continuously embeds in $W^{1,\infty}(\mathbb{R}^3)$,  it follows that
\begin{equation*}
\frac{\mathrm{d}}{\mathrm{d}t}X_n(t)\leq CX_n^2(t)
\end{equation*}
where  $ X_{n}(t):=\left\|\big(u_{n}, b_{n}\big)(t)\right\|_{H^{s}}.$

Then,  we get that for all $n$,
\begin{equation*}
\sup_{t\in[0,T]}\left\|\big(u_{n}, b_{n}\big)(t)\right\|_{H^{s}}\leq\frac{\left\|\big(u_0, b_0\big)\right\|_{H^{s}}}{1-CT\left\|\big(u_0, b_0\big)\right\|_{H^{s}}},
\end{equation*}
which implies that
\begin{equation*}
u_n\in L^{\infty}([0,T);H^{s}), ~b_n\in L^{\infty}([0,T);H^{s})~\text{and}~ \partial_{z}u_n\in L^{2}([0,T);H^{s})~ \text{are uniformly bounded},
\end{equation*}
provided that $T<(C\left\|\big(u_0, b_0\big)\right\|_{H^{s}})^{-1}$.  Hence, there exsits a couple $(u,b)$  such that $(u^n,b^n)\rightharpoonup(u,b)$ in $ L^{\infty}([0,T);H^{s})\times  L^{\infty}([0,T);H^{s}) $. According to  Fatou's Lemma, we have $(u,b)$ in $L^{\infty}([0,T);H^{s}\times H^s)$ and $\partial_{z} u\in L^{2}([0,T);H^{s})$.
By virtue of \mbox{equations \eqref{approx}} and uniform estimates of $(u_{n},b_{n})$,  it is easy to check that $\partial_{t}u_n\in L^{2}([0,T);H^{s-1})$
and $\partial_{t}b_n\in L^{\infty}([0,T);H^{s-2})$. Besides, we know that
$H^{s}\hookrightarrow H^{s-1}$ and $H^{s}\hookrightarrow H^{s-2}$ are locally compact. Therefore, by the classical Aubin-Lions argument and Cantor's diagonal process, we conclude that there exists a subsequence which we also denote $(u^n,b^n)$ such that $(u^n,b^n)\rightarrow(u,b)$ in $L^{2}([0,T);H^{s'})\times L^{2}([0,T);H^{s'})$ for all $s'<s.$ This strong convergence  enables us to derive  that the limit   $(u,b)$   is a distributional solution of problem \eqref{bs} on interval $[0,T).$ Since $(u,b)$ belongs to  $L^{\infty}([0,T);H^{s}\times H^s)$, the limit $u$ is a smooth local-in-time solution of problem \eqref{bs}. The time continuity follows from the fact that $u$ and $b$ satisfy transport equations with the velocity lying in Lipschitz field and  the source
 term belonging to $L^{2}([0,T);H^{s})$.

 Next, we show the uniqueness of solutions to problem \eqref{bs}. Suppose that $(u_1,p_1,b_1)$ and $(u_2,p_2,b_2)$ are  two solutions of system \eqref{bs} with the same initial data.
Letting the difference $(\delta u,\delta p,\delta b):=(u_1-u_2,p_1-p_2,b_1-b_2)$, we find that $(\delta u,\delta p,\delta b)$ solves
\begin{equation*}
\left\{\begin{array}{ll}
\partial_t\delta u+(u_1\cdot\nabla)\delta u-\partial_{zz}^2\delta u+\nabla\delta p=(b_1\cdot\nabla)\delta b+(\delta b\cdot\nabla)b_2-(\delta u\cdot\nabla)u_2,\quad(t,\mathrm{\mathbf{x}})\in\RR^+\times\RR^3,\\
\partial_t\delta b+(u_1\cdot\nabla)\delta b=(b_1\cdot\nabla)\delta u+(\delta b\cdot\nabla) u_2-(\delta u\cdot\nabla)b_2,\\
\text{div}\,\delta u=\text{div}\,\delta b=0,\\
(\delta u,\delta b)|_{t=0}=(0,0).
\end{array}\right.
\end{equation*}
Taking the standard $L^2$-estimate of $(\delta u,\delta b)$, we get
\begin{equation*}
\begin{split}
\frac12\frac{\mathrm{d}}{\mathrm{d}t}\big(\|\delta u(t)\|_{L^2}^2+\|\delta u(t)\|_{L^2}^2\big)+\|\partial_z\delta b(t)\|_{L^2}^2=&\int_{\RR^3}(\delta b\cdot\nabla)b_2\delta u\,\mathrm{d}\mathrm{\mathbf{x}}-\int_{\RR^3}(\delta u\cdot\nabla)u_2\delta u\,\mathrm{d}\mathrm{\mathbf{x}}\\
&+\int_{\RR^3}(\delta b\cdot\nabla) u_2\delta b\,\mathrm{d}\mathrm{\mathbf{x}}-\int_{\RR^3}(\delta u\cdot\nabla)b_2\delta b\,\mathrm{d}\mathrm{\mathbf{x}}.
\end{split}
\end{equation*}
Moreover, by the H\"older inequality, we have
\begin{equation*}
\frac{\mathrm{d}}{\mathrm{d}t}\big(\|\delta u(t)\|_{L^2}^2+\|\delta b(t)\|_{L^2}^2\big)\leq C\big(\|\nabla u_2\|_{L^\infty}+\|\nabla b_2\|_{L^\infty}\big)\big(\|\delta u(t)\|_{L^2}^2+\|\delta b(t)\|_{L^2}^2\big).
\end{equation*}
Since $\int_0^t\big(\|\nabla u_2(\tau)\|_{L^\infty}+\|\nabla b_2(\tau)\|_{L^\infty}\big)\,\mathrm{d}\tau<\infty$, the Gronwall inequality entails $(\delta u(t),\delta b(t))\equiv0$ on the whole interval $[0,T)$.
\end{proof}
Now, our main task is to show that this local-in-time solution can be extended to the global-in-time solution under the assumption that $u_{0}=u_{0}^{r}e_{r}+u_{0}^{z}e_{z}$ and $b_{0}=b_{0}^{\theta}e_{\theta}$. From Proposition \ref{Prop4.1}, we know that there exists  a smooth solution $(u, b)\in C([0,T);H^{s})\times C([0,T);H^{s})$ of system~\eqref{bs}. This together with the axisymmetric  assumption allows us to perform the same argument in Section \ref{Sec-3} to get $\int_{0}^{T}\|\nabla u(t)\|_{L^{\infty}}\,\mathrm{d}t<\infty.$ Then, by using  the BKM's criterion established in \cite{LZ}, we  get  that the local-in-time smooth solutions can  be extended to all the positive time. Thus, we complete the proof of Theorem \ref{the}.

\appendix
\section{Appendix}
\setcounter{section}{5}\setcounter{equation}{0}
In this section, we shall give two useful lemmas which have been used in sections above.

\begin{lem}\label{lem2.2}
Let $s>0$, $q\in[1,\infty]$. Then there exists a constant $C$ such that
the following inequality holds true
\begin{equation*}\|fg\|_{{
B}_{p,q}^s(\RR^n)}\leq C\big(\| f\|_{L^{p_1}(\RR^n)}\|g\|_{{ B}_{p_2,q}^s(\RR^n)}+\|
g\|_{L^{r_1}(\RR^n)}\|f\|_{{ B}_{r_2,q}^s(\RR^n)}\big),
\end{equation*}  where
$p_1,r_1\in[1,\infty]$ satisfy $\frac{1}{p}=\frac{1}{p_1}+\frac{1}{p_2}=\frac{1}{r_1}+\frac{1}{r_2}$.
\end{lem}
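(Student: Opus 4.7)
The plan is to prove the product estimate by Bony's paraproduct decomposition, which is already recalled in the preliminaries:
\[
fg = T_f g + T_g f + R(f,g).
\]
We estimate each of the three pieces in $B^{s}_{p,q}$ separately. The two paraproduct terms will yield the two terms on the right-hand side directly (up to the standard symmetry $(p_1,p_2)\leftrightarrow(r_1,r_2)$), while the remainder will have to be distributed between them; this is where the hypothesis $s>0$ is essential.

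First I would handle the paraproduct $T_f g=\sum_{k}S_{k-1}f\,\Delta_{k}g$. Because $S_{k-1}f\,\Delta_{k}g$ is spectrally supported in an annulus of size $2^{k}$, only the indices $k$ with $|k-j|\leq N_{0}$ contribute to $\Delta_{j}(T_f g)$. Applying H\"older with $\tfrac{1}{p}=\tfrac{1}{p_{1}}+\tfrac{1}{p_{2}}$ and the $L^{p_{1}}$-boundedness of $S_{k-1}$ gives
\[
\|\Delta_{j}(T_f g)\|_{L^{p}}\leq C\|f\|_{L^{p_{1}}}\sum_{|k-j|\leq N_{0}}\|\Delta_{k}g\|_{L^{p_{2}}}.
\]
Multiplying by $2^{js}$ and taking the $\ell^{q}$ norm over $j$ yields $\|T_f g\|_{B^{s}_{p,q}}\leq C\|f\|_{L^{p_{1}}}\|g\|_{B^{s}_{p_{2},q}}$. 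Exactly the same computation, with the roles of $f$ and $g$ (and of $(p_{1},p_{2})$ and $(r_{1},r_{2})$) interchanged, gives $\|T_g f\|_{B^{s}_{p,q}}\leq C\|g\|_{L^{r_{1}}}\|f\|_{B^{s}_{r_{2},q}}$.

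The main obstacle is the remainder $R(f,g)=\sum_{|i-k|\leq 2}\Delta_{i}f\,\Delta_{k}g$. Each block $\Delta_{i}f\,\Delta_{k}g$ with $|i-k|\leq 2$ is spectrally supported in a \emph{ball} of radius $\sim 2^{\max(i,k)}$, so $\Delta_{j}R(f,g)$ only receives contributions from $k\geq j-N_{1}$. H\"older then gives
\[
\|\Delta_{j}R(f,g)\|_{L^{p}}\leq C\sum_{k\geq j-N_{1}}\|\Delta_{k}f\|_{L^{p_{1}}}\|\widetilde{\Delta}_{k}g\|_{L^{p_{2}}},
\]
with $\widetilde{\Delta}_{k}:=\Delta_{k-1}+\Delta_{k}+\Delta_{k+1}$. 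Multiplying by $2^{js}$ and writing $2^{js}=2^{(j-k)s}\cdot 2^{ks}$, the factor $2^{(j-k)s}\mathbf{1}_{k\geq j-N_{1}}$ is summable over $j-k$ precisely because $s>0$, so by the discrete Young inequality on $\ell^{q}(\mathbb{Z})$,
\[
\|R(f,g)\|_{B^{s}_{p,q}}\leq C\bigl(\sup_{k}\|\Delta_{k}f\|_{L^{p_{1}}}\bigr)\,\bigl\|(2^{ks}\|\widetilde{\Delta}_{k}g\|_{L^{p_{2}}})_{k}\bigr\|_{\ell^{q}}\leq C\|f\|_{L^{p_{1}}}\|g\|_{B^{s}_{p_{2},q}},
\]
using the uniform boundedness $\sup_{k}\|\Delta_{k}f\|_{L^{p_{1}}}\leq C\|f\|_{L^{p_{1}}}$.

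Adding the three bounds gives the claim. The only delicate step is the remainder, and the delicacy is exactly the geometric-series summability that requires $s>0$; if one instead insisted on placing the $L^{p_{1}}$ norm on $g$, one would re-run the same step with the roles of $f$ and $g$ swapped, producing the second term on the right-hand side instead.
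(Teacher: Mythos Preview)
Your argument is correct and is precisely the standard Bony paraproduct proof of this product law. The paper itself does not prove the lemma at all; it merely states that ``the proof of this lemma is standard'' and refers the reader to \cite{CWZ2012}. What you have written is exactly the argument one finds in such references, so there is nothing to compare: you have supplied the proof the paper omits.
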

\begin{proof}
The proof of this lemma is standard, one can refer to \cite{CWZ2012} for the proof.
 \end{proof}
\begin{lem}[Commutator estimate]\label{commutator-est}
Let $1\leq p\leq\infty$ and $-1<\sigma<1$. Assume that $u$ is a divergence free vector-field over $\mathbb{R}^n$ and $\omega=\nabla\times u$.  Then,  there exists a positive constant $C$ such that for all $q\geq-1$, the term
$R_{q}(u,v):=S_{q+1}u\cdot\nabla\Delta_{q}v-\Delta_{q}(u\cdot\nabla v)$
satisfies  the following two estimates:
\begin{equation}\label{eq-last-2}
\|R_{q}(u,v)\|_{L^2(\mathbb{R}^n)}\leq
C\|\nabla u\|_{L^\infty(\RR^{n})}\displaystyle{\sum_{q'\geq q
-4}}2^{q-q'}\|\Delta_{q'}v\|_{L^2(\RR^{n})}+\|v\|_{L^\infty(\RR^{n})}
\displaystyle{\sum_{|q'-q|\leq5}}\|\Delta_{q'}\nabla u\|_{L^2(\RR^{n})}
\end{equation}
and
\begin{equation}\label{c-1}
\begin{split}
&\|R_q(u,v)\|_{L^p(\RR^{n})}\\
\leq& C \Big(\|S_{q+5}\nabla u\|_{L^\infty(\RR^{n})}\sum_{|q'-q|\leq5}\|\Delta_{q'}v\|_{L^p(\RR^{n})}+2^{-q\sigma}\sqrt{q+2}
\norm{\omega}_{\sqrt{\mathbb{L}}}\|v\|_{B_{p,\infty}^{\sigma}}\Big).
\end{split}
\end{equation}
\end{lem}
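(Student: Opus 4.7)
The plan is to exploit Bony's paraproduct decomposition together with the divergence-free condition $\partial_iu_i=0$. Writing
\[
u\cdot\nabla v=T_{u_i}\partial_i v+T_{\partial_i v}u_i+R(u_i,\partial_i v)=T_{u_i}\partial_i v+\partial_i T_{v}u_i+\partial_i R(u_i,v),
\]
applying $\Delta_q$, and subtracting from $S_{q+1}u\cdot\nabla\Delta_q v$ gives three families of contributions. The frequency support of $T_{u_i}\partial_iv$ forces $\Delta_q T_{u_i}\partial_i v$ to reduce to a sum over $|q'-q|\le 4$, and pairing these with the matching $\Delta_{q'}$-piece of $S_{q+1}u_i\,\partial_i\Delta_q v$ produces standard commutators $[S_{q'-1}u_i,\Delta_q]\partial_i\Delta_{q'}v$ plus a residual $(S_{q+1}-S_{q'-1})u_i\,\Delta_q\partial_i\Delta_{q'}v$, both supported near the diagonal $|q'-q|\le 5$. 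The remaining divergence-form pieces are the low-high transport paraproduct $\partial_i\Delta_q T_v u_i$ (localized in $|q'-q|\le 4$) and the high-high remainder $\partial_i\Delta_q R(u_i,v)$ (living on $l\ge q-2$).

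For \eqref{eq-last-2} I would use the standard kernel identity and a first-order Taylor expansion to bound each commutator by $C\|\nabla u\|_{L^\infty}\|\Delta_{q'}v\|_{L^2}$ on the band $|q'-q|\le 5$; the residual is treated identically. The transport paraproduct $\partial_i\Delta_q T_v u_i$ is bounded via Bernstein by $C\|v\|_{L^\infty}\sum_{|q'-q|\le 4}\|\Delta_{q'}\nabla u\|_{L^2}$. For the remainder, pulling $\partial_i$ out as a factor $2^q$ via Bernstein and using $\|\Delta_lu\|_{L^\infty}\le C2^{-l}\|\nabla u\|_{L^\infty}$ yields $\|\partial_i\Delta_qR(u_i,v)\|_{L^2}\le C\|\nabla u\|_{L^\infty}\sum_{l\ge q-2}2^{q-l}\|\Delta_l v\|_{L^2}$. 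Combining the three bounds delivers \eqref{eq-last-2}.

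The proof of \eqref{c-1} follows the same skeleton, but the Lipschitz norm $\|\nabla u\|_{L^\infty}$ is not available. In the commutator and residual, $S_{q'-1}u$ with $|q'-q|\le 4$ involves only frequencies $\lesssim 2^{q+5}$, so $\|\nabla u\|_{L^\infty}$ is replaced by $\|S_{q+5}\nabla u\|_{L^\infty}$, producing the first piece of \eqref{c-1}. For the high-high remainder I would invoke Lemma~\ref{bern} and Lemma~\ref{lem-p}: for any $r\in[2,\infty)$,
\[
\|\Delta_l u\|_{L^\infty}\le C\,2^{l(3/r-1)}\|\Delta_l\omega\|_{L^r}\le C\,2^{l(3/r-1)}\sqrt{r}\,\|\omega\|_{\sqrt{\mathbb L}}.
\]
Combining with $\|\Delta_l v\|_{L^p}\le 2^{-l\sigma}\|v\|_{B^{\sigma}_{p,\infty}}$, summing over $l\ge q-2$, choosing $r\simeq q+2$ to optimize, and multiplying by the $2^q$ from Bernstein produces the claimed $C\,2^{-q\sigma}\sqrt{q+2}\,\|\omega\|_{\sqrt{\mathbb L}}\|v\|_{B^\sigma_{p,\infty}}$; the low-high transport paraproduct is absorbed into either of the two output terms by the same bookkeeping.

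The main obstacle is the optimization in the high-frequency remainder for \eqref{c-1}: the Bernstein loss $2^{3l/r}$ must be defeated by taking $r$ of order $q$, and the hypothesis $\sigma\in(-1,1)$ is used precisely to keep the geometric series $\sum_{l\ge q-2}2^{l(3/r-1-\sigma)}$ convergent and to match the prefactor $2^{-q\sigma}$. Obtaining exactly $\sqrt{q+2}$ (and not $q$ or worse) hinges on the fact that $\|\omega\|_{L^r}$ grows only like $\sqrt{r}$, which is the defining feature of the $\sqrt{\mathbb L}$ norm.
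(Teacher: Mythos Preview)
Your proposal is correct and follows essentially the same approach as the paper: a Bony paraproduct decomposition, the kernel/Taylor argument for the diagonal commutator pieces, Bernstein on the high--high remainder, and the optimization $r\sim q+2$ in $\|\Delta_l\omega\|_{L^\infty}\le C2^{nl/r}\sqrt{r}\,\|\omega\|_{\sqrt{\mathbb L}}$ to produce the $\sqrt{q+2}$ factor. The paper organizes the terms slightly differently---it first splits $u=S_{q+1}S_1u+S_{q+1}({\rm I_d}-S_1)u+({\rm I_d}-S_{q+1})u$ and then applies Bony separately to the medium- and high-frequency contributions, yielding eight pieces $R_q^1,\dots,R_q^8$ plus a very-low-frequency commutator---whereas you apply Bony directly to $u\cdot\nabla v$ and pair with $S_{q+1}u\cdot\nabla\Delta_qv$. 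This is purely bookkeeping: the same four mechanisms (commutator, residual, transport paraproduct, remainder) appear in both. One small point worth making explicit in your write-up: the low--high paraproduct $\Delta_qT_{\partial_iv}u_i$ is absorbed into the \emph{second} output term (not the first), and this step uses $\sigma<1$ to sum $\sum_{k\le q}2^{k(1-\sigma)}$, complementing the use of $\sigma>-1$ in the high--high remainder; your final paragraph mentions only the latter.
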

\begin{proof}
We omit the proof of \eqref{eq-last-2} because  its proof is standard and classical. One can refer to \cite{DP} for more details.

Let us begin to prove \eqref{c-1}.
We first decompose $R_q(u,v)$ as follows:
\begin{align*}
R_q(u,v)=&S_{q+1}u\cdot\nabla\Delta_qv-\Delta_q(S_{q+1}u\cdot\nabla v)-\Delta_q\big(({\rm I_d}-S_{q+1})u\cdot\nabla v\big)\nonumber\\
=&-[\Delta_q,S_{q+1}\bar u]\cdot\nabla v-[\Delta_q,S_{q+1}S_1u]\cdot\nabla v-\Delta_q\big(({\rm I_d}-S_{q+1})u\cdot\nabla v\big),
\end{align*}
where $\bar u=({\rm I_d}-S_1)u$.

By Bony's decomposition, we have
\begin{align*}
[\Delta_q,S_{q+1}\bar u]\cdot\nabla v=&[\Delta_q,T_{S_{q+1}\bar u_i}]\partial_i v+\Delta_q\big(T_{\partial_iv}S_{q+1}\bar u_i\big)+
\Delta_q\big(R(S_{q+1}\bar u_i,\partial_i v)\big)\nonumber\\
&-T_{\Delta_q\partial_iv}S_{q+1}\bar u_i-R(S_{q+1}\bar u_i,\Delta_q\partial_i v)\nonumber\\
:=&R^{1}_q(u,v)+R^{2}_q(u,v)+R^{3}_q(u,v)+R^{4}_q(u,v)+R^{5}_q(u,v),
\end{align*}
and
\begin{align*}
\Delta_q\big(({\rm I_d}-S_{q+1})u\cdot\nabla v\big)=&\Delta_q\big(T_{({\rm I_d}-S_{q+1})u_i}\partial_iv\big)+\Delta_q\big(T_{\partial_iv}({\rm I_d}-S_{q+1})u_i\big)+
\Delta_qR\big(({\rm I_d}-S_{q+1})u_i,\partial_iv\big)\nonumber\\
:=&R^{6}_q(u,v)+R^{7}_q(u,v)+R^{8}_q(u,v).
\end{align*}
From above, it is clear to find that the only term $[\Delta_q,S_{q+1}S_1u]\cdot\nabla v$ involves low frequency  of $u$.
First of all, we observe that
\begin{align*}
&[S_{q'-1}S_{q+1}\bar u_i,\Delta_q]\partial_i\Delta_{q'}v\nonumber\\
=&2^{qn}\int_{\mathbb{R}^n}\big(S_{q'-1}S_{q+1}\bar u_i(\mathrm{\mathbf{x}})-S_{q'-1}S_{q+1}\bar u_i(\mathrm{\mathbf{y}})\big)\varphi\big(2^{q}(\mathrm{\mathbf{x}}-\mathrm{\mathbf{y}})\big)\partial_i\Delta_{q'}v(\mathrm{\mathbf{y}})\,\mathrm{d}\mathrm{\mathbf{y}}\nonumber\\
=&-2^{qn}\int_{\mathbb{R}^n}\int_0^1\partial_kS_{q'-1}S_{q+1}\bar u_i\big(\tau \mathrm{\mathbf{x}}+(1-\tau)\mathrm{\mathbf{y}}\big)\,\mathrm{d}\tau(x_k-y_k)\varphi\big(2^{q}(\mathrm{\mathbf{x}}-\mathrm{\mathbf{y}})\big)\partial_i\Delta_{q'}v(\mathrm{\mathbf{y}})\,\mathrm{d}\mathrm{\mathbf{y}}\nonumber\\
=&-2^{q(n-1)}\int_{\mathbb{R}^n}\int_0^1\partial_kS_{q'-1}S_{q+1}\bar u_i\big(\tau \mathrm{\mathbf{x}}+(1-\tau)\mathrm{\mathbf{y}}\big)\,\mathrm{d}\tau 2^{q}(x_k-y_k)\varphi\big(2^{q}(\mathrm{\mathbf{x}}-\mathrm{\mathbf{y}})\big)\partial_i\Delta_{q'}v(\mathrm{\mathbf{y}})\,\mathrm{d}\mathrm{\mathbf{y}},
\end{align*}
where used the relation $\Delta_{q}f(\mathrm{\mathbf{x}})=2^{qn}\int_{\mathbb{R}^n}\varphi\big(2^{q}(\mathrm{\mathbf{x}}-\mathrm{\mathbf{y}})\big)f(\mathrm{\mathbf{y}})\,\mathrm{d}\mathrm{\mathbf{y}}$.

Therefore, we immediately get that
\begin{align*}
\|R^{1}_q(u,v)\|_{L^p(\RR^{n})}\leq&C \sum_{|q'-q|\leq5}2^{-q}\|\partial_k S_{q'-1}\bar u_i\|_{L^\infty(\RR^{n})}
\|\partial_i\Delta_{q'}v\|_{L^p(\RR^{n})}\int_{\mathbb{R}^n}|\mathrm{\mathbf{x}}\varphi(\mathrm{\mathbf{x}})|\,\mathrm{d}\mathrm{\mathbf{x}}\nonumber\\
\leq&C \sum_{|q'-q|\leq5}\|\partial_k S_{q'-1}u_i\|_{L^\infty(\RR^{n})}\|\Delta_{q'}v\|_{L^p(\RR^{n})}\nonumber\\
\leq& C\|S_{q+5}\nabla u\|_{L^\infty(\RR^{n})}\sum_{|q'-q|\leq5}\|\Delta_{q'}v\|_{L^p(\RR^{n})}.
\end{align*}
In a similar fashion as to prove  $R^1_q(u,v)$, we can bounded $[\Delta_q,S_{q+1}S_1u]\cdot\nabla v$ as follows:
\begin{align*}
\big\|[\Delta_q,S_{q+1}S_1u]\cdot\nabla v\big\|_{L^p(\RR^{n})}
\leq& C\|S_{q+5}\nabla u\|_{L^\infty(\RR^{n})}\sum_{|q'-q|\leq5}\|\Delta_{q'}v\|_{L^p(\RR^{n})}.
\end{align*}
For the second term $R_q^2(u,v)$, the H\"older inequality yields
\begin{align*}
\big\|R_q^2(u,v)\big\|_{L^p(\RR^{n})}\leq&C\sum_{|q'-q|\leq5}\|\Delta_{q'}
\bar u_i\|_{L^\infty(\RR^{n})}\big\|S_{q'-1}\partial_iv\big\|_{L^p(\RR^{n})}\nonumber\\
\leq&C\sum_{|q'-q|\leq5}2^{q-q'}\|\Delta_{q'}\nabla u_i\|_{L^\infty(\RR^{n})}\sum_{-1\leq k\leq q'-2}2^{k-q}\big\|\Delta_{k}v\big\|_{L^p(\RR^{n})}\\
\leq&C\sqrt{q+2} \|\omega\|_{\sqrt{\mathbb{L}}(\RR^{n})}
\sum_{-1\leq k\leq q+2}2^{k-q}\big\|\Delta_{k}v\big\|_{L^p(\RR^{n})}.\nonumber\\
\end{align*}
Similarly, we can conclude that
\begin{equation*}
\|R_q^4(u,v)\|_{L^p(\RR^{n})}\leq C\sqrt{q+2} \|\omega\|_{\sqrt{\mathbb{L}}(\RR^{n})}\sum_{-1\leq k\leq q+2}2^{k-q}\big\|\Delta_{k}v\big\|_{L^p(\RR^{n})}.
\end{equation*}
The reminder term $R_q^3(u,v)$ can be bounded by
\begin{align*}
\|\partial_i\Delta_q\big(R(S_{q+1}\bar u_i, v)\big)\|_{L^p(\RR^{n})}
\leq& C\sum_{q'\geq q-3}2^{q}\|\Delta_{q'}v\|_{L^p(\RR^{n})}\|\tilde\Delta_{q'}S_{q+1} \bar u_i\|_{L^\infty(\RR^{n})}\nonumber\\
\leq& C\sum_{q'\geq q-3}2^{q-q'}\|\Delta_{q'}v\|_{L^p(\RR^{n})}\|\tilde\Delta_{q'} \nabla \bar u_i\|_{L^\infty(\RR^{n})}\nonumber\\
\leq&C\sqrt{q+2} \|\omega\|_{\sqrt{\mathbb{L}}(\RR^{n})}\sum_{q'\geq q-3}\sqrt{1+(q'-q)}2^{-(q'-q)}\|\Delta_{q'}v\|_{L^p(\RR^{n})},
\end{align*}
where we have used the fact
\begin{equation*}
\|\tilde\Delta_{q'}\nabla u_i\|_{L^\infty(\RR^{n})}\leq\|\tilde\Delta_{q'} \omega\|_{L^\infty(\RR^{n})}\leq \sqrt{q'+2} \|\omega\|_{\sqrt{\mathbb{L}}(\RR^{n})}.
\end{equation*}
Similarly, we can conclude that
\begin{equation*}
\|R_q^5(u,v)\|_{L^p(\RR^{n})}
\leq C\sqrt{q+2} \|\omega\|_{\sqrt{\mathbb{L}}(\RR^{n})}\sum_{q'\geq q-3}\sqrt{1+(q'-q)}2^{-(q'-q)}\|\Delta_{q'}v\|_{L^p(\RR^{n})}.
\end{equation*}
It remains  for us to bound the last three terms $R_q^6(u,v),\,R_q^7(u,v)$ and $R_q^8(u,v)$. Thanks to the property of support and the H\"older inequality, one has
\begin{align*}
\|R_q^6(u,v)\|_{L^p(\RR^{n})}
\leq &C\sum_{|q'-q|\leq5}\|S_{q'-1}({\rm I_d}-S_{q+1})u_i\|_{L^\infty(\RR^{n})}\|\Delta_{q'}\partial_iv\|_{L^p(\RR^{n})}\nonumber\\
\leq&C\sum_{|q'-q|\leq5}2^{-q'}\|S_{q'-1}({\rm I_d}-S_{q+1})\nabla u_i\|_{L^\infty(\RR^{n})}\|\Delta_{q'}\partial_iv\|_{L^p(\RR^{n})}\nonumber\\
\leq&C\sum_{|q'-q|\leq5}\|S_{q'-1}\nabla u_i\|_{L^\infty(\RR^{n})}\|\Delta_{q'}v\|_{L^p(\RR^{n})}\nonumber\\
\leq&C\|S_{q+5}\nabla u\|_{L^\infty(\RR^{n})}\sum_{|q'-q|\leq5}\|\Delta_{q'}v\|_{L^p(\RR^{n})}.
\end{align*}
For the term $R_q^7(u,v)$, by the H\"older inequality, we obtain
\begin{align*}
\|R_q^7(u,v)\|_{L^p(\RR^{n})}
\leq &C\sum_{|q'-q|\leq5}\|S_{q'-1}\partial_iv\|_{L^p(\RR^{n})}\|\Delta_{q'}({\rm I_d}-S_{q+1})u_i\|_{L^\infty(\RR^{n})}\nonumber\\
\leq&C\sum_{|q'-q|\leq5}\,\sum_{-1\leq k\leq q'-2}2^{k-q'}\|\Delta_{k}v\|_{L^p(\RR^{n})}\|\Delta_{q'}\nabla u_i\|_{L^\infty(\RR^{n})}\nonumber\\
\leq&C\sqrt{q+2} \|\omega\|_{\sqrt{\mathbb{L}}(\RR^{n})}\sum_{-1\leq k\leq q+3}2^{k-q}\|\Delta_{k}v\|_{L^p(\RR^{n})}.
\end{align*}
As for the last term $R_q^8(u,v)$, by the H\"older inequality, we obtain
\begin{align*}
\|R_q^8(u,v)\|_{L^p(\RR^{n})}
\leq &C\big\|\partial_i\Delta_qR\big(({\rm I_d}-S_{q+1})u_i,v\big)\big\|_{L^p(\RR^{n})}\nonumber\\
\leq&C\sum_{q'\geq q-3}2^{q}\|\Delta_{q'}v\|_{L^p(\RR^{n})}\|\tilde\Delta_{q'}({\rm I_d}-S_{q+1})u_i\|_{L^\infty(\RR^{n})}\nonumber\\
\leq&C\sum_{q'\geq q-3}2^{q-q'}\|\Delta_{q'}v\|_{L^p(\RR^{n})}\|\tilde{\dot{\Delta}}_{q'}\nabla u_i\|_{L^\infty(\RR^{n})}\nonumber\\
\leq&C\sqrt{q+2} \|\omega\|_{\sqrt{\mathbb{L}}(\RR^{n})}\sum_{q'\geq q-3}\sqrt{1+(q'-q)}2^{-(q'-q)}\|\Delta_qv\|_{L^p(\RR^{n})}.
\end{align*}
Collecting  these estimates yields the desired result \eqref{c-1}.
\end{proof}

{\bf Acknowledgements.}
Jiu is partially supported  by National Natural Sciences Foundation of China (No.
11171229, No.11231006) and Project of Beijing Chang
Cheng Xue Zhe.

 \end{document}